\newcommand{\cosheaf}[1]       {\underline{{#1}}}
\newcommand\define[1]	       {{\bf{#1}}}	
\renewcommand{\vec}            {{\mathrm{Vec}}}
\newcommand{\field}            {{\mathsf{k}}}
\newcommand{\im}               {{\mathrm{im}}\,}
\renewcommand{\ker}            {{\mathrm{ker}}\,}
\newcommand{\Int}               {{\mathrm{Int}}\,}
\newcommand{\Inc}               {{\mathrm{Inc}}}
\newcommand{\Z}                 {{\mathbb{Z}}}
\newcommand{\One}               {{\mathbbm{1}}}
\newcommand{\M}                 {H^\downarrow}
\newcommand{\Ch}                 {C^\downarrow}
\newcommand{\End}               {\mathrm{End}}
\newcommand{\FinAb}             {\mathrm{FinAb}}
\newcommand{\C}                 {\mathbb{C}}
\newcommand{\Gr}                {\mathcal{G}}
\newcommand{\ob}                {\mathrm{ob}\,}
\newcommand{\BD}              {\mathrm{BD}}
\newcommand{\bd}              {\mathrm{bd}}
\newcolumntype{L}{>{$}l<{$}}
\newcolumntype{C}{>{$}c<{$}}
\newcommand{\cat}{\mathcal{C}}
\newcommand\blfootnote[1]{%
  \begingroup
  \renewcommand\thefootnote{}\footnote{#1}%
  \addtocounter{footnote}{-1}%
  \endgroup
}
\newcommand{\revision}[1]{#1}
\title{M\"obius Homology}
\author[1]{Amit Patel}
\author[2]{Primoz Skraba}
\affil[1]{Department of Mathematics, Colorado State University}
\affil[2]{School of Mathematical Sciences, Queen Mary University London}
\date{}
\begin{document}

\maketitle

\abstract{\revision{This paper introduces and develops \emph{M\"obius homology}, a homology theory for representations of finite posets into abelian categories. Although the connection between poset topology and M\"obius functions is classical, we go further by establishing a direct connection between poset topology and M\"obius inversions. In particular, we show that M\"obius homology categorifies the M\"obius inversion, as its Euler characteristic coincides with the M\"obius inversion applied to the dimension function of the representation.
We also present a homological version of Rota's Galois Connection Theorem, relating the M\"obius homologies of two posets connected by a Galois connection.} 

\revision{Our main application concerns persistent homology over general posets. We prove that, under a suitable definition, the persistence diagram arises as an Euler characteristic over a poset of intervals, and thus M\"obius homology provides a categorification of the persistence diagram. This furnishes a new invariant for persistent homology over arbitrary finite posets. Finally, leveraging our homological variant of Rota's Galois Connection Theorem, we establish several results about the persistence diagram.}
\blfootnote{This work is partially funded by the Leverhulme Trust grant VP2-2021-008.}
}


\section{Introduction}

Let $\Gr$ be an abelian group and $P$ a finite poset.
For all functions $m : P \to \Gr$, there exists a unique function $\partial m :  P \to \Gr$ such that
for all $b \in P$, 
	$$m(b) = \sum_{a : a \leq b} \partial m(a).$$
The function $\partial m$ is called the \emph{M\"obius inversion} of $m$.
Often the function $m$ is the shadow of a richer algebraic structure.
That is, there is often an abelian category $\cat$
and a $P$-module, i.e., a functor,
$M  : P \to \cat$ whose \emph{dimension function} is $m$.
Here, the dimension function is the assignment to every $b \in P$
the representative $\big[ M(b) \big]$ of the object $M(b)$ in the Grothendieck group $\Gr$ of $\cat$.
In this case, it is natural to ask for an invariant of $M$ that decategorifies to the M\"obius inversion $\partial m$.

This paper introduces a homology theory for $P$-modules we call \emph{M\"obius homology}.
Every module $M$ gives rise to a simplicial cosheaf over the order complex of $P$.
We define the M\"obius homology of $M$ at an element $b \in P$, denoted $\M_\ast M(b)$, as the homology of the simplicial cosheaf localized to $b$.
Our first result (Theorem~\ref{thm:euler}) states that the Euler characteristic of the M\"obius homology is the M\"obius inversion.
That is, for all $b \in P$,
$$\partial m(b) = \sum_{d  \geq 0} (-1)^d \big[ \M_d M (b) \big].$$

Our second result (Theorem~\ref{thm:rota_homology}) is a homological version of Rota's Galois connection theorem.  Rota's theorem is an important tool for 
studying and computing  M\"obius inversions, and likewise we use our result to prove several properties of M\"obius homology.


\paragraph{Previous Work} 
Poset topology is the study of the order complex associated with a poset. 
This field originates from Rota's seminal 1964 paper on M\"obius functions~\cite{gian1964foundations} 
and has since evolved into a rich area of study motivated by questions from diverse disciplines~\cite{wachs2006poset}. 
The M\"obius function is of particular interest due to its connection with the Euler characteristic of the order complex~\cite{rota_euler}. 
In this work, we explore the topology of $P$-modules and their relationship to M\"obius inversion.

The study of $P$-modules as sheaves dates back to at least Deheuvels~\cite{Deheuvels1962}. 
Building on this perspective, Baclawski introduced Whitney homology for modules over geometric lattices, 
providing a categorification of Whitney numbers~\cite{BACLAWSKI1975125}. 
In a subsequent paper, Baclawski extended this approach by categorifying Rota's Galois connection theorem 
as a Leray spectral sequence~\cite{baclawski1977galois}.

Recent work has advanced the categorification of M\"obius inversion for posets arising from stratifications. 
Petersen~\cite{petersen2017spectral} constructed a spectral sequence for stratified spaces 
that provides a categorical lift of M\"obius inversion. This construction computes the compactly supported cohomology of an open stratum 
in terms of the cohomology of closed strata. 
More recently, Ayala, Mazel-Gee, and Rozenblyum categorified M\"obius inversion in the context of poset modules valued in a stable $\infty$-category~\cite{ayala2024stratified}. 
Their approach generalizes Petersen's results. 
Both works appear to extend the earlier contributions of Yanagawa in the setting of cellular spaces~\cite{yanagawa2005dualizing}. 
This broader line of work, including the results presented in this paper, suggests a deeper connection with Euler calculus~\cite{viro1988euler, SCHAPIRA199183}.

Unlike previous approaches, our method avoids spectral sequences and instead provides a homology theory that directly categorifies M\"obius inversion.

\paragraph{Application: Persistent Homology} 
Our discovery of M\"obius homology is motivated by questions in persistent homology.
Traditionally, persistent homology starts with a family of topological spaces parameterized by a finite, totally ordered poset $P$.
Apply homology using field coefficients and 
the result is a $P$-module of vector spaces 
$M_\ast : P \to \vec$, one for each dimension.
The \emph{persistence diagram} of each $M_\ast$
is a complete combinatorial invariant that encodes the birth and death of its generators along $P$.

There are many equivalent definitions of the persistence diagram.
The definition proposed by Cohen-Steiner, Edelsbrunner, and Harer uses an inclusion-exclusion formula~\cite{stability-persistence}.
From this viewpoint, the persistence diagram of $M_\ast$ is the M\"obius inversion of a $\Z$-valued function on the poset of intervals of $P$~\cite{Patel2018}.
There are now two well developed generalizations of this definition to modules over any finite poset.
The persistence diagram of Kim and M\'emoli is a M\"obius inversion of the rank invariant of the module~\cite{kim2018generalized} whereas the persistence diagram of G\"ulem, McCleary, and Patel is the M\"obius inversion of the birth-death invariant of the module~\cite{McCPa20-edit-distance, GulenMcCleary}.
When $P$ is not totally ordered, neither are complete invariants of the module.
We show how M\"obius homology categorifies the birth-death approach and is a stronger invariant than simply the M\"obius inversion.
Finally, we demonstrate how the homological version of Rota's Galois connection theorem helps in computing persistent M\"obius homology.

\section{Background}

This section provides a brief introduction to M\"obius inversions, simplicial cosheaves, the Grothendieck group, and Galois connections. 


\subsection{M\"obius Inversion}
We begin with the M\"obius inversion formula.
For a more thorough introduction to M\"obius inversion, see~\cite{stanley_2011}.

Let $P$ be a finite poset. For $a \leq c$, the (closed) \emph{interval} $[a,c]$ is the set 
\[
[a,c] := \{ b \in P : a \leq b \leq c \}.
\]
Let $\mathcal{I}(P)$ denote the set of all intervals in $P$. The \emph{$\Z$-incidence algebra} on $P$, denoted $\Inc(P)$, is the set of all functions $\alpha : \mathcal{I}(P) \to \Z$ with three operations: scaling, addition, and multiplication.
The first two are clear.
Multiplication of two functions $\alpha$ and  $\beta$ is
$$(\alpha \ast \beta)[a,c] := \sum_{b:a \leq b \leq c} \alpha[a,b] \cdot \beta[b,c].$$
The multiplicative identity is 
    \begin{equation*}
    \One [a,b] = \begin{cases}
    1 & \text{if $a = b$}  \\
    0 & \text{otherwise}.
    \end{cases}
    \end{equation*}
The \emph{zeta function} is 
$\zeta [a,b] = 1$, for all $a \leq b$.
The zeta function is invertible, and its inverse, denoted $\mu$, is the \emph{M\"obius function}.

\begin{defn}
Let $P$ be a finite poset and $\Gr$ an abelian group.
The \define{M\"obius inversion} of any function $f : P \to \Gr$ is the function
$\partial f : P \to \Gr$
defined as
\begin{equation}\label{eq:inverion_formula}
\partial f(c) = \sum_{b: b \leq c} f(b) \cdot \mu[b,c].
\end{equation}
\end{defn}
The M\"obius inversion is the unique function satisfying the following equation for all~$c \in P$:
\begin{align*}
\sum_{b: b \leq c} \partial f(b) &= \sum_{b: b \leq c} \partial f(b) \cdot \zeta[b,c]  \\
&=
\sum_{b: b \leq c} \left ( \sum_{a: a \leq b} f(a) \cdot \mu[a,b] \right ) \cdot \zeta[b,c] \\
&= \sum_{a: a \leq c} f(a) \left( \sum_{b : a \leq b  \leq c} \mu[a,b] \cdot \zeta [b,c] \right) \\
& =
\revision{\sum_{a: a \leq c} f(a) \cdot \One[a,c] } \\ 
&= f(c).
\end{align*}
That is, $\partial f$ is the \revision{combinatorial} derivative of $f$.

\subsection{Simplicial Cosheaves}

We introduce simplicial cosheaves as functors from a simplicial complex to an abelian category. The homology of these cosheaves and their Euler characteristic, which lives in the Grothendieck group of the abelian category, will be essential to our main theorems. For more background on cellular (co)sheaves, see~\cite{ghrist2014elementary}, and for the Grothendieck group, see~\cite{weibel2013kbook}.

Let $K$ be a finite simplicial complex.
We write $\tau \geq \sigma$ to mean $\tau$ is a coface of $\sigma$ and 
$\tau >_1 \sigma$ to mean $\dim \tau = \dim(\sigma) +1$.
We also think of $K$ as a category where $\tau \to \sigma$ whenever $\tau \geq \sigma$.
For the moment, let $\cat$ be any abelian category.

\begin{defn}
A \define{simplicial cosheaf} over $K$ 
valued in $\cat$ is a functor $\cosheaf{A} : K \to \cat$.
\end{defn}

We now construct the chain complex $C_\bullet (K; \cosheaf{A})$ associated to a simplicial cosheaf~$\cosheaf{A}$.
Orient every simplex of $K$.
The $d$-th chain object is
$$C_d (K; \cosheaf{A}) := \bigoplus_{\sigma: \dim(\sigma) = d} \cosheaf{A}(\sigma).$$
For $\tau >_1 \sigma$, restrict the orientation on $\tau$ to an orientation on $\sigma$. 
Write $[\tau:\sigma] = 1$ if the
restriction agrees with the orientation on 
$\sigma$, and write $[\tau:\sigma] = -1$ if the restriction disagrees.
The boundary operator $\partial_d : C_d (K; \cosheaf{A}) \to C_{d-1} (K; \cosheaf{A})$ is 
generated summand-wise as follows.
For a  $d$-simplex $\tau$, let 
$\iota_\tau : \cosheaf{A}(\tau) \to C_d(K; \cosheaf{A})$ be the canonical morphism into the direct sum, and let $\pi_\tau : C_d(K;\cosheaf{A}) \to \cosheaf{A}(\tau)$ be the canonical morphism out of the direct sum.
Define the restriction of $\partial_d$ to $\tau$ as
$$\partial_d  |_\tau := \sum_{\sigma: \tau >_1 \sigma} [\tau:\sigma] \cdot \big(
\iota_\sigma \circ \cosheaf{A}(\tau \geq \sigma)  \circ \pi_\tau  \big).$$
The full boundary operator is the sum 
$\partial_d := \sum_{\tau : \dim \tau = d} \partial_d |_\tau.$
Of course, the ordinary boundary operator applied twice to any simplex is zero.
This fact combined with commutativity of $\cosheaf{A}$, implies
$\partial_{d-1} \circ \partial_d = 0$.
Thus, we have a chain complex
    \begin{equation*}
    \begin{tikzcd}
    C_\bullet (K; \cosheaf{A}) : \cdots \ar[r]  & C_2(K; \cosheaf{A}) \ar[r, "\partial_2"] & C_1 (K;\cosheaf{A})
     \ar[r, "\partial_1"]  & C_0 (K; \cosheaf{A} ) \ar[r,  "\partial_0"]  &  0.
    \end{tikzcd}
    \end{equation*}

\begin{defn}
The \define{$d$-th cosheaf homology} of $\cosheaf{A}$, denoted $H_d ( K; \cosheaf{A})$, is the $d$-th homology object of the chain complex $C_\bullet (K; \cosheaf{A})$.
\end{defn}

We are also interested in relative simplicial cosheaf homology.
Let $L \subseteq K$ be a subcomplex,
and let $\cosheaf{B} : L \to \cat$ be the restriction of the cosheaf $\cosheaf{A}$ to $L$.
The associated relative chain complex is
    \begin{equation*}
    \begin{tikzcd}
    C_\bullet (K, L; \cosheaf{A}) : \cdots \ar[r]  & \dfrac{C_2(K; \cosheaf{A})}{C_2(L; \cosheaf{B})} \ar[r, "\partial_2"] & \dfrac{C_1 (K;\cosheaf{A})}{C_1 (L;\cosheaf{B})}
     \ar[r, "\partial_1"]  & \dfrac{C_0 (K; \cosheaf{A} )}{C_0 (L; \cosheaf{B} )} \ar[r,  "\partial_0"]  &  0.
    \end{tikzcd}
    \end{equation*}

\begin{defn}
The \define{$d$-th relative cosheaf homology} of $\cosheaf{A}$ with respect to the subcomplex $L \subseteq K$, denoted $H_d ( K, L; \cosheaf{A})$, is the $d$-th homology object of 
 the chain complex $C_\bullet (K, L; \cosheaf{A})$.
\end{defn}

We now explore the Euler characteristic
of a simplicial cosheaf
valued in an abelian category~$\cat$.
Assume $\cat$ is equivalent to a small category.
Denote by $[A]$ the isomomorphism class of an object $A$ in $\cat$.

\begin{defn}
The \define{Grothendieck group} of $\cat$, denoted $K(\cat)$, is the free abelian group generated by the set of isomorphism classes of objects in $\cat$ with the relation
$[B] = [A] + [C]$ for every short exact sequence
$0 \to A \to B \to C \to 0$ in $\cat$.
\end{defn}

We now give three working examples of abelian
categories along with their Grothendieck groups.
The calculation of each group
is left to the reader.

\begin{ex}
If $\vec$ is the category of 
finite dimensional $\field$-vector spaces, 
for some fixed field~$\field$, then
$K(\vec) \cong \Z$.
For an object $A$, the corresponding element $[A] \in K(\vec)$ is the dimension of $A$.
\end{ex}

\begin{ex}
Let $\End(\C)$ be the category of 
 endomorphisms
on finite dimensional complex vector spaces.
That is, an object
is an endomorphism $\phi : \C^m \to \C^m$, and 
a morphism from an object $\phi : \C^m \to \C^m$ to an object $\psi : \C^n \to \C^n$ is a linear map $f : \C^m \to \C^n$ such that $f \circ \phi = \psi \circ f$.
Its Grothendieck group, $K\big( \End (\C) \big)$,
is isomorphic to $\oplus_{\lambda \in \C} \Z$.
For an object~$A$, the corresponding element $[A] \in K\big( \End (\C) \big)$ is the map $\C \to \Z$ 
that assigns to every $\lambda \in \C$ the number of times $\lambda$ occurs as an eigenvalue of $A$.
\end{ex}

\begin{ex}
Let $\FinAb$ be the category of finite
abelian groups.
Its Grothendieck group, $K(\FinAb)$, is isomorphic to $\oplus_{p \text{ prime}} \Z$.
For an object~$A$, the corresponding element $[A] \in K(\FinAb)$
is the assignment to each prime the number of times it occurs in the prime decomposition
of $A$.
For example, if $A$ is $\Z / 16 \Z$, then $[A]$ is the assignment $2 \mapsto 4$.
\end{ex}

\begin{defn} \label{defn:total_euler}
The \define{Euler characteristic} of a simplicial cosheaf $\cosheaf{A} : K \to \cat$, denoted $\chi(K; \cosheaf{A})$, is the following element of $K(\cat)$:
\begin{equation}\label{eq:euler1}
\chi (K;  \cosheaf{A}) :=  \sum_{d \geq 0}  \sum\limits_{\sigma : \dim(\sigma) = d} (-1)^d \big[ \cosheaf{A}(\sigma) \big].
\end{equation}
\end{defn}

If we think of $\big[ H_i (K; \cosheaf{A}) \big] \in K(\cat)$ as the $i$-th ``Betti number'' of $\cosheaf{A}$, then the Euler characteristic of $\cosheaf{A}$ 
is the familiar signed sum of Betti numbers as follows.

\begin{prop} \label{prop:cosheaf_euler}
$\chi (K; \cosheaf{A})  = \sum_{d \geq 0} (-1)^d \big[H_d(K;\cosheaf{A}) \big].$
\end{prop}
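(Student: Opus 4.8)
The plan is to run the classical ``Euler characteristic is additive'' argument: in the Grothendieck group $K(\cat)$ the alternating sum of the chain objects of $C_\bullet(K;\cosheaf{A})$ equals the alternating sum of its homology objects, and the former is visibly the right-hand side of~\eqref{eq:euler1}.

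First I would note that because $K$ is a finite simplicial complex the chain complex $C_\bullet(K;\cosheaf{A})$ is bounded, with $C_d(K;\cosheaf{A}) = 0$ unless $0 \le d \le \dim K$. Since $\cat$ is abelian, I may form $Z_d := \ker \partial_d$ and $B_d := \im \partial_{d+1}$; the identity $\partial_{d-1}\circ\partial_d = 0$ gives $B_d \subseteq Z_d$, and by definition $H_d(K;\cosheaf{A}) = Z_d/B_d$. This produces two families of short exact sequences in $\cat$:
\[
0 \to Z_d \to C_d(K;\cosheaf{A}) \to B_{d-1} \to 0,
\qquad
0 \to B_d \to Z_d \to H_d(K;\cosheaf{A}) \to 0,
\]
where the third arrow of the first sequence is the map induced by $\partial_d$ on its image. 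Applying the defining relation of $K(\cat)$ to each gives $[C_d(K;\cosheaf{A})] = [Z_d] + [B_{d-1}]$ and $[Z_d] = [B_d] + [H_d(K;\cosheaf{A})]$, hence $[C_d(K;\cosheaf{A})] = [B_d] + [B_{d-1}] + [H_d(K;\cosheaf{A})]$ in $K(\cat)$.

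Next I would take the alternating sum over $d$. Using $B_{-1} = 0$ and $B_d = 0$ for $d > \dim K$, the boundary terms telescope: $\sum_{d \ge 0} (-1)^d\big([B_d] + [B_{d-1}]\big) = 0$. Therefore
\[
\sum_{d \ge 0} (-1)^d\, [C_d(K;\cosheaf{A})] \;=\; \sum_{d \ge 0} (-1)^d\, [H_d(K;\cosheaf{A})].
\]
Finally I would unpack the left-hand side: applying the Grothendieck relation to the split sequences $0 \to A \to A \oplus C \to C \to 0$ shows that $[\,\cdot\,]$ is additive on finite direct sums, so $[C_d(K;\cosheaf{A})] = \sum_{\dim \sigma = d} [\cosheaf{A}(\sigma)]$. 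Substituting and comparing with Definition~\ref{defn:total_euler} gives $\sum_{d \ge 0}(-1)^d [C_d(K;\cosheaf{A})] = \chi(K;\cosheaf{A})$, which completes the proof.

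There is no serious obstacle here — this is the standard fact that the Euler characteristic of a bounded complex in an abelian category can be computed from its homology. The only points that warrant a line of care are that the subobjects $Z_d, B_d$, the quotients $H_d$, and all the displayed sequences genuinely live in $\cat$ (guaranteed since $\cat$ is abelian and, as assumed, equivalent to a small category so that $K(\cat)$ is defined), and that the telescoping of the boundary terms is legitimate, which is exactly where the finiteness of $K$, i.e.\ boundedness of the complex, is used.
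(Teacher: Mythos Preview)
Your argument is correct and essentially identical to the paper's: both use the two short exact sequences $0 \to Z_d \to C_d \to B_{d-1} \to 0$ and $0 \to B_d \to Z_d \to H_d \to 0$ to rewrite the alternating sum of $[C_d]$ as the alternating sum of $[H_d]$. The only cosmetic difference is that you substitute $[Z_d] = [B_d] + [H_d]$ into $[C_d] = [Z_d] + [B_{d-1}]$ first and then telescope, whereas the paper keeps the two relations separate and reindexes the $B$-sum directly.
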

\begin{proof}
Recall the chain complex $C_\bullet(K; \cosheaf{A})$.
Let $Z_d (K; \cosheaf{A}) := \ker \partial_d$
and $B_d (K; \cosheaf{A}) := \im \partial_{d+1}$.
Consider the following short exact sequences:
    \begin{equation*}
    \begin{tikzcd}
    0 \ar[r] & Z_d (K;  \cosheaf{A}) \ar[r, hookrightarrow] & C_d (K; \cosheaf{A}) \ar[r, twoheadrightarrow, "\partial_d"] &  
        B_{d-1} (K;  \cosheaf{A}) \ar[r] & 0 \\
    0 \ar[r] & B_d (K  ; \cosheaf{A}) \ar[r, hookrightarrow] & Z_d (K ; \cosheaf{A}) \ar[r, twoheadrightarrow] &  
        H_{d} (K ;\cosheaf{A}) \ar[r] & 0.
    \end{tikzcd}
    \end{equation*}
The two sequences imply $\big[ C_d (K; \cosheaf{A}) \big] = \big[ Z_d (K ;\cosheaf{A})\big] + \big[B_{d-1} (K ;\cosheaf{A})\big]$ and
$\big[ H_d (K ; \cosheaf{A} ) \big] = 
\big[ Z_d (K ; \cosheaf{A}) \big]  - 
\big[ B_{d} (K ;  \cosheaf{A}) \big]$
in $K(\cat)$. Thus
    \begin{align*}
    \chi(K; \cosheaf{A})  
    &= \sum_{d \geq 0} \sum_{\sigma: \dim(\sigma) = d} (-1)^d \big[ \cosheaf{A}(\sigma) \big] \\
    &= \sum_{d \geq 0} (-1)^d \big[ C_d (K ; \cosheaf{A})\big] \\
    &= \sum_{d \geq 0} (-1)^d \big[ Z_d (K  ;\cosheaf{A})\big] + \sum_{d \geq 0} (-1)^{d} \big[ B_{d-1} (K  ; \cosheaf{A})\big] \\
    &= \sum_{d \geq 0} (-1)^d \left( \big[ Z_d (K ; \cosheaf{A})\big] - \big[ B_{d} (K  ; \cosheaf{A})\big] \right) \\
   &= \sum_{d \geq 0} (-1)^d \big[ H_d (K;  \cosheaf{A})\big].
    \end{align*}
\end{proof}

\begin{defn}\label{defn:reuler}
The \define{relative Euler characteristic} of $\cosheaf{A}$ with respect to the subcomplex $L \subseteq K$, denoted $\chi(K,L; \cosheaf{A})$, is
\begin{equation*}
    \chi(K,L; \cosheaf{A}) := \sum_{d \geq 0} \sum_{\substack{\sigma \in K \setminus L \\ \dim(\sigma) = d}}
    (-1)^d \big[ \cosheaf{A}(\sigma) \big].
\end{equation*}
\end{defn}

Similarly, the relative Euler characteristic is the signed sum of relative Betti numbers.

\begin{prop}\label{prop:relative_reuler}
$\chi(K, L; \cosheaf{A}) = \sum_{d \geq 0} (-1)^d \big[ H_d (K, L; \cosheaf{A}) \big].$
\end{prop}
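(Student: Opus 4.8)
The plan is to run the argument of \Cref{prop:cosheaf_euler} verbatim on the relative chain complex $C_\bullet(K,L;\cosheaf{A})$, after first rewriting the relative Euler characteristic of \Cref{defn:reuler} as the alternating sum of the classes of the relative chain objects in $K(\cat)$.

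\emph{Step 1 (rewrite the relative Euler characteristic).} For each $d$, the sub-object $C_d(L;\cosheaf{B}) \subseteq C_d(K;\cosheaf{A})$ is precisely the direct summand indexed by the $d$-simplices that lie in $L$, so the quotient satisfies $C_d(K,L;\cosheaf{A}) = C_d(K;\cosheaf{A})/C_d(L;\cosheaf{B}) \cong \bigoplus_{\sigma \in K \setminus L,\ \dim(\sigma)=d} \cosheaf{A}(\sigma)$; equivalently, the short exact sequence $0 \to C_d(L;\cosheaf{B}) \to C_d(K;\cosheaf{A}) \to C_d(K,L;\cosheaf{A}) \to 0$ gives $[C_d(K,L;\cosheaf{A})] = [C_d(K;\cosheaf{A})] - [C_d(L;\cosheaf{B})]$ in $K(\cat)$. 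Either way, $[C_d(K,L;\cosheaf{A})] = \sum_{\sigma \in K \setminus L,\ \dim(\sigma)=d} [\cosheaf{A}(\sigma)]$. Multiplying by $(-1)^d$, summing over $d$, and comparing with \Cref{defn:reuler} gives $\chi(K,L;\cosheaf{A}) = \sum_{d \geq 0} (-1)^d [C_d(K,L;\cosheaf{A})]$.

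\emph{Step 2 (pass to homology).} Set $Z_d := \ker \partial_d$ and $B_d := \im \partial_{d+1}$ inside $C_\bullet(K,L;\cosheaf{A})$, with the convention $B_{-1}=0$; these are objects of $\cat$ since $\cat$ is abelian and equivalent to a small category, and only finitely many are nonzero because the complex is bounded. The short exact sequences $0 \to Z_d \to C_d(K,L;\cosheaf{A}) \to B_{d-1} \to 0$ (second map induced by $\partial_d$) and $0 \to B_d \to Z_d \to H_d(K,L;\cosheaf{A}) \to 0$ yield $[C_d(K,L;\cosheaf{A})] = [Z_d] + [B_{d-1}]$ and $[H_d(K,L;\cosheaf{A})] = [Z_d] - [B_d]$ in $K(\cat)$. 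Substituting the first identity into the conclusion of Step 1 and reindexing the $[B_{d-1}]$ terms, the contributions of the $B_d$ recombine so that $\sum_{d\geq 0}(-1)^d[C_d(K,L;\cosheaf{A})] = \sum_{d \geq 0} (-1)^d \big( [Z_d] - [B_d] \big) = \sum_{d \geq 0} (-1)^d [H_d(K,L;\cosheaf{A})]$, which is the claim.

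There is no genuinely hard step here: the only new input is Step 1, namely recognizing the quotient chain object as the direct sum over the simplices of $K \setminus L$, after which Step 2 is word-for-word the proof of \Cref{prop:cosheaf_euler}. One could alternatively deduce the proposition from \Cref{prop:cosheaf_euler} applied to $K$ and to $L$ together with the long exact homology sequence of the pair $(K,L)$ and additivity of $[-]$ along exact sequences; but that route presupposes the long exact sequence of the pair for cosheaf homology, which has not been set up in the excerpt, so the self-contained argument above is preferable.
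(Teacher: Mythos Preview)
Your proof is correct and matches the paper's intended approach: the paper omits the proof entirely, prefacing the proposition only with ``Similarly, the relative Euler characteristic is the signed sum of relative Betti numbers,'' which signals that one should rerun the argument of \Cref{prop:cosheaf_euler} on the relative chain complex exactly as you do in Steps~1 and~2.
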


\subsection{Galois Connections}

We now review some fundamental properties of Galois connections. A Galois connection between posets $P$ and $Q$ can be understood as a pair of adjoint functors.  
It is well known that such adjoint functors induce a homotopy equivalence between the order complexes $\Delta P$ and $\Delta Q$; see Quillen~\cite[\S 1.3]{QUILLEN1978101}.  
In this section, we construct an explicit chain homotopy equivalence between $\Delta P$ and $\Delta Q$, which will be a key component in the proof of Theorem~\ref{thm:rota_homology}.

\begin{defn}
A \define{(monotone) Galois connection} from a poset $P$ to a poset $Q$, written
$f : P \leftrightarrows Q : g$, are monotone functions $f : P \to Q$ and $g : Q \to P$
such that for all $a \in P$ and for all $x \in Q$, $f(a) \leq x$ if and only if $a \leq g(x).$
The function $f$ is called the \emph{left adjoint} of $g$ and $g$ is called the \emph{right adjoint} of $f$
\end{defn}

\begin{ex}
Consider the following Galois connections
$f : P \leftrightarrows Q : g$ where $f$ 
is given by the red dashed arrows and $g$ by the blue dashed arrows:

\begin{tikzpicture}
\node (0) at (-0.5,2.1) {$\textcolor{red}{P}$};
\node (a) at (0,0) {};
\node (b) at (2,0) {};
\node (c) at (0,2) {};
\node (d) at (2,2) {};
\filldraw[red] (a) circle (2pt);
\filldraw[red] (b) circle (2pt);
\filldraw[red] (c) circle (2pt);
\filldraw[red] (d) circle (2pt);
\draw[->] (a) -- (b);
\draw[->] (a) -- (c);
\draw[->] (b) -- (d);
\draw[->] (c) -- (d);

\node (1) at (3.9,2.1) {$\textcolor{blue}{Q}$};

\node (e) at (0,-1.5) {};
\node (f) at (3.5,-1.5) {};
\node (g) at (3.5,2) {};
\filldraw[blue] (e) circle (2pt);
\filldraw[blue] (f) circle (2pt);
\filldraw[blue] (g) circle (2pt);
\draw[->] (e) -- (f);
\draw[->] (f) -- (g);

\draw[->,red,dashed] (a) to[out=240,in=120] (e);
\draw[->,blue,dashed] (e) to[out=60,in=300] (a);
\draw[->,blue,dashed] (g) to[out=150,in=30] (d);
\draw[->,red,dashed] (d) to[out=-30,in=210] (g);
\draw[->,blue,dashed] (g) to[out=150,in=30] (d);
\draw[->,red,dashed] (b) to[out=-90,in=170] (f);
\draw[->,blue,dashed] (f) to[out=100,in=-0] (b);
\draw[->,red,dashed] (c) to[out=40,in=140] (g);
\end{tikzpicture}
\begin{tikzpicture}
\node (0) at (0,0) {};
\node (1) at (8.4,0.5) {$\textcolor{blue}{Q}$};
\node (2) at (8.4,3) {$\textcolor{red}{P}$};

\foreach \x in {0,...,4}{
 \node  (x\x) at (2*\x,0.5) {};
 \filldraw[blue] (x\x) circle (2pt);
}
\foreach \x in {0,...,4}{
 \node  (y\x) at (2*\x,3) {};
 \filldraw[red] (y\x) circle (2pt);
}
  \foreach \x/\y in {0/1,1/2,2/3,3/4} {
  \draw[->] (x\x) -- (x\y);
  \draw[->] (y\x) -- (y\y);  
    }
\draw[->,blue,dashed] (x0) to[out=60,in=300] (y0);
\draw[->,red,dashed] (y0) to[out=240,in=120] (x0);
\draw[->,blue,dashed] (x4) to[out=60,in=300] (y4);
\draw[->,red,dashed] (y4) to[out=240,in=120] (x4);
\draw[->,red,dashed] (y1) to[out=-15,in=80] (x2);
\draw[->,blue,dashed] (x2) to[out=170,in=-90] (y1);
\draw[->,blue,dashed] (x1) to[out=90,in=-30] (y0);
\draw[->,blue,dashed] (x3) to[out=60,in=300] (y3);
\draw[->,red,dashed] (y3) to[out=240,in=120] (x3);
\draw[->,red,dashed] (y2) to[out=-80,in=170] (x3);
\end{tikzpicture}
\end{ex}

\begin{lem} \label{lem:galois}
Every Galois connection $f : P \leftrightarrows Q : g$ satisfies the following properties:
    \begin{enumerate}[label=(\roman*)]
        \item For all $a \in P$, $a \leq g \circ f(a)$.
        \item For all $x \in Q$, $f \circ g (x) \leq x$.
        \item If $f(a) = x$, then $f \circ g(x) = x$.
        \item $f$ is surjective if and only if $g$ is injective.
        \item $f$ is injective if and only if $g$ is surjective.
    \end{enumerate}
\end{lem}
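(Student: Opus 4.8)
The plan is to establish the five properties in order, deriving everything from the defining equivalence $f(a) \leq x \iff a \leq g(x)$ together with monotonicity of $f$ and $g$ and antisymmetry of the orders. Items (i)--(iii) are the basic unit/counit facts, and items (iv)--(v) follow from them by a short surjectivity/injectivity bookkeeping argument.

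First, for (i) I would instantiate the adjunction at $x = f(a)$: since $f(a) \leq f(a)$ holds trivially, the equivalence yields $a \leq g(f(a))$. Dually, for (ii) I would instantiate at $a = g(x)$: since $g(x) \leq g(x)$, the equivalence yields $f(g(x)) \leq x$. For (iii), assume $f(a) = x$. Applying the monotone map $f$ to the inequality $a \leq gf(a) = g(x)$ from (i) gives $x = f(a) \leq fg(x)$, while (ii) gives $fg(x) \leq x$; antisymmetry in $Q$ then forces $fg(x) = x$.

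Next I would record the two ``triangle identities'' that do the real work: for every $x \in Q$ one has $gfg(x) = g(x)$, and for every $a \in P$ one has $fgf(a) = f(a)$. For the first, (ii) and monotonicity of $g$ give $gfg(x) \leq g(x)$, while (i) applied to the element $g(x) \in P$ gives $g(x) \leq gf(g(x)) = gfg(x)$; antisymmetry closes it. The second is the mirror image, using (i), monotonicity of $f$, and (ii).

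Finally I would deduce (iv) and (v). For (iv): if $f$ is surjective and $g(x) = g(y)$, write $x = f(a)$ and $y = f(b)$; then (iii) gives $x = fg(x) = fg(y) = y$, so $g$ is injective. Conversely, if $g$ is injective, then $gfg(x) = g(x)$ forces $fg(x) = x$, exhibiting every $x \in Q$ as a value of $f$. Property (v) is proved the same way with the roles of $P,Q,f,g$ interchanged, using $fgf(a) = f(a)$ in place of $gfg(x) = g(x)$. I do not expect a genuine obstacle here, since each step is a short chase inside a poset; the only thing to watch is keeping the direction of each inequality straight and remembering to invoke antisymmetry, not merely reflexivity, at the points where equalities are asserted.
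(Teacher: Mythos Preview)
Your proof is correct. Items (i)--(iii) are argued exactly as in the paper. For (iv) and (v), your route differs slightly: you first record the triangle identities $gfg(x)=g(x)$ and $fgf(a)=f(a)$ and then read off both implications directly (e.g., $g$ injective plus $gfg(x)=g(x)$ immediately gives $fg(x)=x$, hence $f$ surjective). The paper instead handles the converse directions by contrapositive, explicitly constructing a witness: if $f$ is not surjective it picks $x\notin\im f$, sets $y=f(g(x))$, and checks $g(x)=g(y)$ with $x\neq y$; similarly for (v). Your approach is a bit cleaner and more symmetric, while the paper's is slightly more concrete; both are short and neither offers any real advantage over the other.
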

\begin{proof} 
\begin{enumerate}[label=(\roman*)]
    \item In the axiom of a Galois connection above, let $x = f(a)$.
    Then the inequality $f(a) \leq f(a)$ implies $a \leq g \circ f(a)$.
    \item In the axiom of a Galois 
    connection above, let $a = g(x)$.
    Then the inequality $g(x) \leq g(x)$ implies $f \circ g (x) \leq x$.
    \item The inequality $f(a) \leq x$ implies $a \leq g(x)$. Since $f$ is monotone, $x = f(a) \leq f \circ g(x)$. 
    Combined with (ii), $f \circ g(x) = x$.
    \item Assume $f$ is surjective. 
    Fix distinct elements $x, y \in Q$, and suppose, for the sake of contradiction,  
    $g(x) = g(y)$.
    Since $f$ is surjective, $f(a) = x$ and $f(b) = y$ for some $a,b \in P$. 
    By (iii), $f \circ g(x) = x$ and $f \circ g(y) = y$. 
    Thus $x = y$ contradicting $x \neq y$.
    
    Assume $f$ is not surjective. 
    Then there is an $x \in Q$ such that $f^{-1}(x) = \emptyset$. 
    Let $a = g(x)$ and let $y = f (a)$. 
    By (iii), $x \neq y$. 
    The inequality $f(a) \leq y$ implies $a \leq g(y)$. 
    By (ii), $f \circ g(x) = y \leq x$.
    Apply $g$ to get $g(y) \leq g(x) = a$.
    Thus $g$ is not injective because $g(x) = g(y) =a$.

    \item Assume $f$ is injective.
    If $a \in P$ and $x = f(a)$, then, by (iii),
    $f \circ g(x) = x$.
    Thus $g(x) = a$ making $g$ surjective.

    Assume $f$ is not injective.
    Then there are are distinct elements 
    $a, b \in P$ such that $f(a) = f(b) = x$.
    By (iii), $g(x)$ is an element of $f^{-1}(x)$, which contains both $a$ and $b$.
    Thus $g$ cannot be surjective.
\end{enumerate}
\end{proof}

Fix a Galois connection $f : P \leftrightarrows Q : g$.
The function $f$ takes a chain $a_0 < \cdots < a_n$ in $P$ to the chain $f(a_0) \leq \cdots \leq f(a_n)$ in $Q$. 
Thus,~$f$ gives rise to a simplicial map $\Delta f : \Delta P \to \Delta Q$.
Similarly, $g$ gives rise to a simplicial map $\Delta g : \Delta Q \to \Delta P$.
Denote by~$C_\bullet (\Delta P; \Z)$ and~$C_\bullet ( \Delta Q; \Z)$ the 
standard simplicial chain complexes using
$\Z$-coefficients
associated to $\Delta P$ and $\Delta Q$, respectively.
The simplicial maps $\Delta f$ and $\Delta g$ induce the chain maps
    \begin{align*}
    \Delta f_\bullet : C_\bullet (\Delta P; \Z) \to C_\bullet (\Delta Q; \Z) &&
    \Delta g_\bullet  : C_\bullet (\Delta Q; \Z) \to C_\bullet (\Delta P; \Z).
    \end{align*}

\begin{prop}\label{prop:chain_eq}
The compositions $\Delta g \circ \Delta f$ and $\Delta f \circ \Delta g$ are chain homotopic to the identity on $C_\bullet(\Delta P; \Z)$ and
$C_\bullet(\Delta Q; \Z)$, respectively.
In other words, the two chain complexes
are chain homotopy equivalent and therefore their homologies are isomorphic.
\end{prop}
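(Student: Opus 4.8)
The plan is to build an explicit chain homotopy between $\Delta g_\bullet \circ \Delta f_\bullet$ and the identity on $C_\bullet(\Delta P; \Z)$ (the case of $\Delta f_\bullet \circ \Delta g_\bullet$ on $C_\bullet(\Delta Q;\Z)$ being symmetric). The key observation, from Lemma~\ref{lem:galois}(i), is that for every $a \in P$ we have $a \leq (g \circ f)(a)$; moreover $g \circ f$ is monotone, so it is a monotone self-map of $P$ that is pointwise $\geq \id_P$. This is exactly the situation of the standard ``poset map close to the identity'' argument (the discrete analogue of a straight-line homotopy): any two monotone maps $\phi, \psi : P \to Q$ with $\phi(a) \leq \psi(a)$ for all $a$ induce chain-homotopic maps on order complexes. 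I would first state and prove this as the main lemma, then apply it with $\phi = \id_P$, $\psi = g \circ f$.

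The core step is the construction of the prism/chain homotopy operator. Given monotone $\phi \leq \psi : P \to Q$, define $h_n : C_n(\Delta P;\Z) \to C_{n+1}(\Delta Q;\Z)$ on a chain $a_0 < \cdots < a_n$ by the usual alternating prism formula
\[
h_n(a_0 < \cdots < a_n) \;=\; \sum_{i=0}^{n} (-1)^i \,\bigl[\phi(a_0), \ldots, \phi(a_i), \psi(a_i), \ldots, \psi(a_n)\bigr],
\]
where a bracketed tuple denotes the corresponding simplex of $\Delta Q$ (a chain, possibly with repeats, in which case the term is declared zero). This is well-defined precisely because $\phi(a_i) \leq \psi(a_i) \leq \psi(a_{i+1})$ and $\phi(a_i)\le\phi(a_{i+1})$, so each listed tuple is weakly increasing in $Q$; degenerate (repeated-vertex) tuples are set to $0$, which is the standard convention for the ordered simplicial chain complex. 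One then verifies the chain-homotopy identity $\partial_{n+1} h_n + h_{n-1} \partial_n = \psi_\bullet - \phi_\bullet$ by the familiar telescoping computation: expanding $\partial h$ and $h \partial$ and matching terms, the ``internal'' faces cancel in pairs and the surviving endpoint terms are exactly $[\psi(a_0),\ldots,\psi(a_n)] - [\phi(a_0),\ldots,\phi(a_n)]$.

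I would then finish by assembling the pieces: applying the lemma to $\id_P \leq g\circ f$ gives $\Delta g_\bullet \circ \Delta f_\bullet = (g\circ f)_\bullet \simeq (\id_P)_\bullet = \id$; symmetrically, $f \circ g \leq \id_Q$ by Lemma~\ref{lem:galois}(ii) gives $\Delta f_\bullet \circ \Delta g_\bullet \simeq \id$ on $C_\bullet(\Delta Q;\Z)$. Hence $\Delta f_\bullet$ and $\Delta g_\bullet$ are mutually inverse up to chain homotopy, so the chain complexes are chain homotopy equivalent and the induced maps on homology are isomorphisms. The only step requiring genuine care — the ``main obstacle'' — is the bookkeeping in the prism identity, in particular being scrupulous about the degenerate tuples: one must check that the terms which formally should cancel but involve a repeated vertex are each individually zero (or cancel against another zero), so that the telescoping still goes through. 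This is routine but it is where sign errors and off-by-one mistakes creep in, so I would write that verification out in full rather than cite it.
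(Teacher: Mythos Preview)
Your proposal is correct and follows essentially the same approach as the paper: the paper constructs exactly the prism operator you describe, with $\phi_n(a_0<\cdots<a_n)=\sum_{i=0}^n(-1)^i[a_0,\ldots,a_i,b_i,\ldots,b_n]$ where $b_i=g\!\circ\! f(a_i)$, and verifies the homotopy identity by the same telescoping expansion, handling degenerate terms by declaring non-simplices to be zero. The only difference is packaging: you extract a single lemma for any pair of pointwise-comparable monotone maps and then specialize twice, whereas the paper writes out the $\id_P\leq g\circ f$ and $f\circ g\leq \id_Q$ cases separately; the computations are identical.
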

\begin{proof}
    See Appendix~\ref{sec:appendix_one}.
\end{proof}

\section{M\"obius Inversions and Euler Characteristics}

We define two invariants associated to a $P$-module valued in an abelian category.
The first is the M\"obius inversion of the module's \emph{dimension function}.
The second is \emph{M\"obius homology}.
Our main theorem (Theorem~\ref{thm:euler}) shows that the M\"obius inversion is the Euler characteristic of the M\"obius homology.

\begin{defn}
Let $P$ be a finite poset and $\cat$ any small abelian category.
A \define{$P$-module} is a functor $M  : P \to \cat$.
\end{defn}

\subsection{Dimension Function}

The first invariant considers only the objects of $M$ while forgetting all its morphisms.

\begin{defn}\label{defn:dimension}
The \define{dimension function} of $M$ is the function $m  : P \to K(\cat)$
that assigns to every $a \in P$ the element $\big[ M(a) \big] \in K(\cat)$.
\end{defn}

We now look at two similar examples of the dimension function~$m$ and its M\"obius inversion~$\partial m$ in two different categories.

\begin{ex}
\label{ex:first_mobius}
Consider the poset $P$ and the two $P$-modules $M$ and $N$ in Figure~\ref{fig:first_example}.
The dimension function $m : P \to \Z$ for both modules are the same.
The M\"obius inversion $\partial m$ is $\partial m(b) = 1$,
$\partial m(a) = 0$, and $\partial m(c) = 1$.
   \begin{figure}
	\begin{equation*}
	\begin{tikzcd}[ampersand replacement=\&]
	M \& \field \ar[rr, "\footnotesize{\begin{pmatrix}1 \\ 0 \end{pmatrix}}"] \&\& \field^2 \&\& \field 
	\ar[ll, "\footnotesize{\begin{pmatrix}1 \\ 0 \end{pmatrix}}"'] \\
	N \& \field \ar[rr, "\footnotesize{\begin{pmatrix}1 \\ 0 \end{pmatrix}}"] \&\& \field^2 \&\& \field 
	\ar[ll, "\footnotesize{\begin{pmatrix}  0 \\ 1 \end{pmatrix}}"'] \\
	P \& b \ar[rr, "\leq"] \&\& a \&\& c \ar[ll,  "\geq"']
	\end{tikzcd}
	\end{equation*}
	\caption{Poset $P$ along with two $P$-modules $M$ and $N$ valued in $\vec$.}
	\label{fig:first_example}
	\end{figure}
\end{ex}

\begin{ex}
\label{ex:second_mobius}
Consider the poset $P$ and the two $P$-modules $M$ and $N$ in Figure~\ref{fig:second_example}.
Recall an element of $K\big(\mathrm{End}(\mathbb{C}) \big)$ is the assignment to every complex number a multiplicity.
The dimension function $m : P \to K(\cat)$ for both modules are the same.
That is, $m(b) = \{ \lambda \mapsto 1 \}$, $m(a) = \{ \lambda \mapsto 2 \}$,
and $m(c) = \{ \lambda \mapsto 2\}$.
The M\"obius inversion $\partial m$ is $\partial m(b) = \{ \lambda \mapsto 1 \}$,
$\partial m(a) = 0$, and $\partial m(c) = \{ \lambda \mapsto 1\}$.

\begin{figure}
\begin{equation*}
\begin{tikzcd}[ampersand replacement=\&]
	M \& \C \arrow["\footnotesize{\begin{pmatrix} \lambda \end{pmatrix}}"', loop, distance=1.5em, in=110,out=70,start anchor={[xshift=1.4ex,yshift=-0.5ex]north}, end anchor={[xshift=-1.2ex,yshift=-0.5ex]north}]
  \ar[rr, "\footnotesize{\begin{pmatrix}1 \\ 0 \end{pmatrix}}"] \&\& \C^2 
  \arrow["\footnotesize{\begin{pmatrix} \lambda & 0 \\ 0 & \lambda\end{pmatrix}}"', loop, distance=1.5em, in=110,out=70,start anchor={[xshift=1.4ex,yshift=-0.50ex]north}, end anchor={[xshift=-1.2ex,yshift=-0.5ex]north}]
   \&\& \C 
        \arrow["\footnotesize{\begin{pmatrix} \lambda \end{pmatrix}}"', loop, distance=1.5em, in=110,out=70,start anchor={[xshift=1.4ex,yshift=-0.5ex]north}, end anchor={[xshift=-1.2ex,yshift=-0.5ex]north}]
	\ar[ll, "\footnotesize{\begin{pmatrix}1 \\ 0 \end{pmatrix}}"'] \\  
    \\
    \\
	N \& \C \arrow["\footnotesize{\begin{pmatrix} \lambda \end{pmatrix}}"', loop, distance=1.5em, in=110,out=70,start anchor={[xshift=1.4ex,yshift=-0.50ex]north}, end anchor={[xshift=-1.2ex,yshift=-0.5ex]north}]
    \ar[rr, "\footnotesize{\begin{pmatrix}1 \\ 0 \end{pmatrix}}"] \&\& \C^2 
    \arrow["\footnotesize{\begin{pmatrix} \lambda & 0 \\ 0 & \lambda\end{pmatrix}}"', loop, distance=1.5em, in=110,out=70,start anchor={[xshift=1.4ex,yshift=-0.50ex]north}, end anchor={[xshift=-1.2ex,yshift=-0.5ex]north}]
    \&\& \C
      \arrow["\footnotesize{\begin{pmatrix} \lambda \end{pmatrix}}"', loop, distance=1.5em, in=110,out=70,start anchor={[xshift=1.4ex,yshift=-0.50ex]north}, end anchor={[xshift=-1.2ex,yshift=-0.5ex]north}]
	\ar[ll, "\footnotesize{\begin{pmatrix}  0 \\ 1 \end{pmatrix}}"'] \\
	P \& b \ar[rr, "\leq"] \&\& a \&\& c \ar[ll,  "\geq"']
	\end{tikzcd}
	\end{equation*}
	\caption{Poset $P$ along with two $P$-modules $M$ and $N$ valued in $\End(\C)$.}
\label{fig:second_example}
\end{figure}
\end{ex}

\subsection{Order cosheaf}
For our second invariant, we start by building a simplicial complex.
A chain of \emph{length}~$n$ in~$P$ is a sequence $a_0  < \cdots < a_n$ of $n+1$ distinct elements.
A \emph{subchain} of a chain $a_0 < \cdots < a_n$ is a chain obtained by deleting any number of its elements.

\begin{defn}
The \define{order complex} of $P$, 
denoted $\Delta P$,
is the simplicial complex whose (open) $i$-simplices are chains of length~$i$.
A simplex $a_0 < \cdots < a_j$ is a coface of a simplex $b_0 < \cdots < b_i$
if the later is a subchain of the former.
\end{defn}

Let $\min, \max : \Delta P \to P$ be the functions that assign to each simplex its minimal and maximal elements, respectively, in its corresponding chain. Note that if $\tau \geq \sigma$ in $\Delta P$, then $\min(\tau) \leq \min(\sigma)$ and $\max(\sigma) \leq \max(\tau)$ in $P$.

\begin{defn}
The \define{order cosheaf} of $M$ is the simplicial cosheaf $\cosheaf{M}: \Delta P \to \cat$ defined as the following composition:
\begin{equation*}
    \begin{tikzcd}
        \Delta P \ar[r, "\min"] & P \ar[r, "M"] & \cat
    \end{tikzcd}
\end{equation*}
\end{defn}

We now consider a local order cosheaf homology.

\begin{defn}
The \define{lower complex} of $b \in P$ is the subcomplex 
$\Delta P_{\leq b} := \{ \sigma \in \Delta P : \max(\sigma) \leq b \}$ of $\Delta P$.
The \define{strict lower complex} of $b \in P$ is the subcomplex
$\Delta P_{< b} := \big  \{ \sigma \in \Delta P : \max( \sigma) < b \big  \}$ of $\Delta P$.
\end{defn}

\begin{defn}
The \define{M\"obius chain complex module} of $M :P \to \cat$, denoted $\Ch_\bullet M : P \to \mathrm{Ch}(\cat)$, is the assignment to every $a \in P$ the relative chain complex $C_\bullet( \Delta P_{\leq a}, \Delta P_{< a}; \cosheaf{M} )$ and to every $a \leq b$
the morphism
$$C_\bullet \big( \Delta P_{\leq a}, \Delta P_{< a}; \cosheaf{M} \big) \rightarrow C_\bullet \big(\Delta P_{\leq b}, \Delta P_{< b}; \cosheaf{M}  \big)$$
induced by the inclusion of the pair of complexes.
Apply the homology functor and the result is the \define{M\"obius homology module} of $M$, denoted $\M_\ast M : P \to \cat$.
\end{defn}

The internal morphisms of the M\"obius homology module $\M_\ast M$ are all zero.
This is because for every $a < b$, 
the homology $\M_\ast M(a)$ is supported over the set of simplices $\Delta P_{\leq a} \setminus \Delta P_{< a}$ which is a subset of $\Delta P_{< b}$.

\begin{ex}
\label{ex:first_homology}
Consider the poset $P$ and the two $P$-modules $M, N :  P \to \vec$ in Figure~\ref{fig:first_example}.
The table below lists the M\"obius homology of $M$ and $N$ at each element.
Notice the M\"obius inversion is the Euler characteristic of the M\"obius homology.
    \begin{center}
        \begin{tabular}{L || C | C | C} 
	&  b & a & c \\ \hline \hline
	\M_0 M & \field & \field & \field \\
	\M_1 M & 0 & \field & 0 \\ \hline \hline
	\M_0 N & \field & 0 & \field \\
        \M_1 N & 0 & 0 & 0 
	\end{tabular}
	\end{center}
\end{ex}

\begin{ex}
\label{ex:seconnd_homology}
Consider the poset $P$ and the two $P$-modules 
$M, N :  P \to \End(\C)$ in Figure~\ref{fig:second_example}.
The table below lists the M\"obius homology of $M$ and $N$ at each element.
Notice that the M\"obius inversion is the Euler characteristic of the M\"obius homology.
    \begin{center}
        \begin{tabular}{L || C | C | C} 
	&  b & a & c \\ \hline \hline
	\M_0 M & 
        \begin{pmatrix} \lambda \end{pmatrix} 
        & \begin{pmatrix} \lambda \end{pmatrix} 
        & \begin{pmatrix} \lambda \end{pmatrix} \\
	\M_1 M &  0 & \begin{pmatrix} \lambda \end{pmatrix}  & 0 \\ \hline \hline
	\M_0 N 
        & \begin{pmatrix} \lambda \end{pmatrix}
        & 0 & \begin{pmatrix} \lambda \end{pmatrix} \\
	\M_1 N & 0 & 0 & 0 
	\end{tabular}
	\end{center}
\end{ex}

\begin{ex} \label{ex:cusp}
Consider the cusp map $f: \mathbb{R}^2 \to \mathbb{R}^2$ in Figure~\ref{fig:map}.
The critical values of~$f$ partition the
codomain into five cells: one 0-cell,
two $1$-cells, and two $2$-cells.
Let $P$ be the face poset of this partition
and consider the module $M :  P \to \vec$
in Figure~\ref{fig:cusp_module}
that assigns to every cell its fiberwise zero-dimensional homology to every face relation the natural map between the fibers.
The table below lists the M\"obius homology of~$M$ at each cell.
Notice all non-trivial entries occur
at the codimension of the cell.
M\"obius homology of $M$ identifies the normal component of $f$ to each cell.
\begin{center}
    \begin{tabular}{L || C | C | C | C | C} 
    &  a & b & c &d &e \\ \hline \hline
    \M_0 M & \field^3 & 
    \field & 
    0 &  0 & 0 \\
    \hline
    \M_1 M & 0 & 
    0 &  \field^2 &\field^2 & 0 \\
    \hline
    \M_2 M &  0 &   0 &
    0 & 0 & \field
    \end{tabular}
\end{center}

\begin{figure}
\centering
\begin{subfigure}[b]{0.55\textwidth}
\centering
\includegraphics[width=\textwidth,page=1]{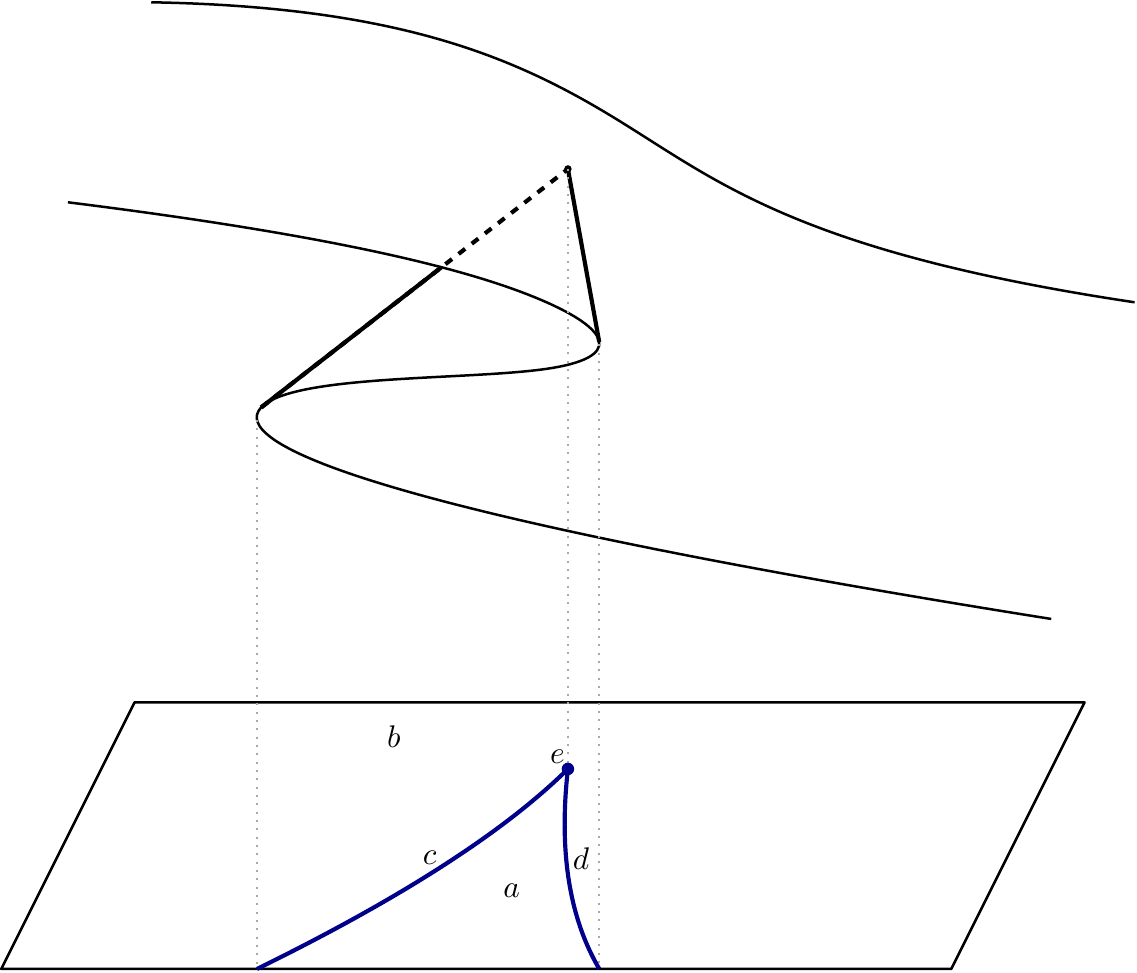}
\caption{Cusp map}
\label{fig:map}
\end{subfigure}
\begin{subfigure}[b]{0.4\textwidth}
\begin{center}
\begin{tikzpicture}[scale=0.7, every node/.style={transform shape}] 
 \draw[ domain=0:3, smooth, variable=\x, blue,thick] plot ({2*\x}, {sqrt(8*\x*\x*\x/27)});
  \draw[ domain=0:3, smooth, variable=\x, blue,thick] plot ({2*\x}, {-sqrt(8*\x*\x*\x/27)});
  \node (c) at (0,0) {};
 \filldraw[blue] (c) circle (2pt);  
 \node (e0) at (6,0) {$k^3$};
 \node (e1) at (-2,0) {$k$};
 \node (c1) at (-0.2,0.3) {$k$};
  \filldraw[white] (3.4,1.2) circle (7pt);  
  \filldraw[white] (3.4,-1.2) circle (7pt);  
  \node (r2) at (3.4,1.2) {$k^2$};
  \node (r3) at (3.4,-1.2) {$k^2$};
  \draw[->] (e0.north west) to[out=135,in=-10]
 node[above=10,pos=0.1] {\scriptsize{$\begin{bmatrix} 1 & 1 & 0 \\ 0 &0 &1 \end{bmatrix} $}    } (r2.east);
   \draw[->] (e0.south west) to[out=-135,in=-10]
 node[below=12,pos=0.1] {\scriptsize{$\begin{bmatrix} 1 & 0 & 0 \\ 0 &1 &1 \end{bmatrix} $}    } (r3.east);
  \draw[->] (e1.north) to[out=90,in=100,looseness=1.5]  
 node[above=2] {\scriptsize{$\begin{bmatrix}  0 \\ 1 \end{bmatrix} $}    } (r2.north);
  \draw[->] (e1.south) to[out=-90,in=-100,looseness=1.5] 
 node[below=2] {\scriptsize{$\begin{bmatrix} 1 \\ 0   \end{bmatrix} $}    } (r3.south);
 \draw[->] (e1.east) to 
 node[above=2] {\scriptsize{$\begin{bmatrix} 1 \end{bmatrix} $}    } (c.west);
 \draw[->] (r2.west) to[out=170,in=70]
 node[above=2] {\scriptsize{$\begin{bmatrix} 1 & 1\end{bmatrix} $}    } (c.north);
 \draw[->] (r3.west) to[out=-170,in=-70]
 node[below=2] {\scriptsize{$\begin{bmatrix} 1 & 1\end{bmatrix} $}    } (c.south);
\end{tikzpicture}
\end{center}
\caption{$H_0$ module}
\label{fig:cusp_module}
\end{subfigure}
\caption{Module over face poset for the cusp map.}
\label{fig:cusp}
\end{figure}
\end{ex}

\begin{ex} \label{ex:third_mobius}
Consider the modules $M$ and $N$ of subgroups
of $\Z/4\Z$ and $\Z/2\Z \oplus \Z/2\Z$, respectively,
in Figures~\ref{fig:subgroups_one} and~\ref{fig:subgroups_two}.
Both modules are valued in $\FinAb$.
The table below lists the M\"obius homology of $M$
and $N$ at the top element.

    \begin{center}
    \begin{tabular}{L || C | C } 
    &  \frac{\Z}{2\Z} \oplus \frac{\Z}{2\Z} 
    & \frac{\Z}{4\Z} \vspace{2pt}\\  \hline \hline
    \M_0   & 0 & \frac{\Z}{2\Z} \vspace{2pt} \\\hline
    \M_1 & \frac{\Z}{2\Z} & 0 \vspace{2pt} \\ \hline
    \M_2 & 0 & 0
    \end{tabular}
    \end{center}

    \begin{figure}
        \centering
        \begin{subfigure}[b]{0.4\textwidth}
        \begin{equation*}
        \begin{tikzcd}
        & \frac{\Z}{2\Z} \oplus \frac{\Z}{2\Z} & \\
        & & \\
        \frac{\Z}{2\Z} 
        \ar[ruu,"\footnotesize{\begin{pmatrix} 1 \\ 0 \end{pmatrix}}"] & 
             \frac{\Z}{2\Z} 
        \ar[near start,uu, "\footnotesize{\begin{pmatrix} 1 \\ 1 \end{pmatrix}}"]
        & \frac{\Z}{2\Z} 
        \ar[luu,"\footnotesize{\begin{pmatrix} 0 \\ 1 \end{pmatrix}}"'] \\&&\\
        & 0 \ar[luu] \ar[ruu] \ar[uu]&
        \end{tikzcd}
        \end{equation*}
        \caption{$M: P \to \FinAb$}
        \label{fig:subgroups_one}
        \end{subfigure}
        \hfill
        \begin{subfigure}[b]{0.5\textwidth}
        \begin{equation*}
        \begin{tikzcd}
            \frac{\Z}{4\Z} \\ \\
            \frac{\Z}{2\Z} \ar[uu, "\footnotesize{\begin{pmatrix} 2 \end{pmatrix}}"]\\
            \\
            0 \ar[uu]
        \end{tikzcd}
        \end{equation*}
        \caption{$N: Q \to \FinAb$}
        \label{fig:subgroups_two}
        \end{subfigure}
    \caption{Modules of subgroups.}
    \label{fig:third_example}
    \end{figure}
\end{ex}

\subsection{Main Theorem}

The following theorem relates M\"obius inversions to M\"obius homology.

\begin{thm} \label{thm:euler}
Let $m :  P \to K(\cat)$ be the dimension
function of a $P$-module $M : P \to \cat$.
Then for all $b \in P$,
$$\partial m(b) = \revision{\chi \big( \Delta P_{\leq b} , \Delta P_{< b}; \cosheaf{M} \big)  =}\sum_{d \geq 0} (-1)^d \big[ \M_\ast M (b) \big].$$
\end{thm}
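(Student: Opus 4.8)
The statement has two equalities. The second one, $\chi(\Delta P_{\leq b}, \Delta P_{<b}; \cosheaf{M}) = \sum_{d\geq 0}(-1)^d[\M_d M(b)]$, is immediate from Proposition~\ref{prop:relative_reuler} applied to the pair $(\Delta P_{\leq b}, \Delta P_{<b})$, since by definition $\M_\ast M(b)$ is the homology of the relative chain complex $C_\bullet(\Delta P_{\leq b}, \Delta P_{<b}; \cosheaf{M})$. So the real content is the first equality, $\partial m(b) = \chi(\Delta P_{\leq b}, \Delta P_{<b}; \cosheaf{M})$, and that is where I would focus.

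First I would unwind the relative Euler characteristic using Definition~\ref{defn:reuler}: it is the signed sum over simplices $\sigma \in \Delta P_{\leq b} \setminus \Delta P_{<b}$ of $(-1)^{\dim\sigma}[\cosheaf{M}(\sigma)]$. The key combinatorial observation is the identification of this simplex set: $\sigma \in \Delta P_{\leq b}\setminus \Delta P_{<b}$ means $\max(\sigma)\leq b$ but not $\max(\sigma)<b$, i.e.\ $\max(\sigma) = b$ exactly. So these are precisely the chains $a_0 < a_1 < \cdots < a_d = b$ ending at $b$. Since the order cosheaf is $\cosheaf{M}(\sigma) = M(\min\sigma) = M(a_0)$, grouping the chains by their minimal element $a_0 =: a$ gives
$$\chi(\Delta P_{\leq b}, \Delta P_{<b}; \cosheaf{M}) = \sum_{a \leq b} \left( \sum_{d \geq 0} (-1)^d \, \#\{\text{chains } a = a_0 < \cdots < a_d = b\} \right) [M(a)].$$
Then I would invoke the classical fact (Philip Hall's theorem) that the inner alternating count of chains from $a$ to $b$ equals $\mu[a,b]$ — more precisely, $\sum_{d}(-1)^d c_d(a,b) = \mu[a,b]$ where $c_d(a,b)$ is the number of chains of length $d$ from $a$ to $b$ (with the convention that the empty-in-between chain $a<b$, or the single point $a=b$, is handled correctly: when $a=b$ only the length-$0$ chain contributes, giving $1 = \mu[a,a]$). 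This turns the expression into $\sum_{a\leq b}\mu[a,b]\,[M(a)] = \sum_{a\leq b}m(a)\cdot\mu[a,b] = \partial m(b)$ by the M\"obius inversion formula~\eqref{eq:inverion_formula}.

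The main obstacle, and the step deserving the most care, is justifying that the alternating sum of chain counts equals the M\"obius function. One clean way is to recognize $\sum_d (-1)^d c_d(a,b)$ as (up to a shift and the reduced-versus-unreduced convention) the reduced Euler characteristic of the order complex of the open interval $(a,b)$: a chain $a = a_0 < \cdots < a_d = b$ of length $d$ corresponds to a $(d-2)$-simplex $a_1 < \cdots < a_{d-1}$ of $\Delta(a,b)$ (and the cases $a=b$, $a \lessdot b$ give the reduced chain complex's degree $-1$ and degree $0$ pieces). Rota's theorem that $\mu[a,b] = \tilde\chi(\Delta(a,b))$ then finishes it; alternatively, one verifies directly that $b \mapsto \sum_d(-1)^d c_d(a,b)$ satisfies the defining recursion $\sum_{a \leq c \leq b}(\cdots) = \One[a,b]$ of $\mu$ and concludes by uniqueness. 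I would also double-check the boundary conventions: the relative chain complex quotients out $\Delta P_{<b}$, which is exactly what discards the chains not ending at $b$, so no simplex is double-counted, and the degree in $\chi$ matches $\dim\sigma = d$ for a length-$d$ chain, consistent with the sign $(-1)^d$ appearing in the target formula.
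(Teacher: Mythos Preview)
Your proposal is correct and follows essentially the same route as the paper: identify the simplices in $\Delta P_{\leq b}\setminus\Delta P_{<b}$ as chains with maximum $b$, group by their minimum $a$, invoke Philip Hall's theorem (Lemma~\ref{lem:Hall}) to recognize the alternating chain count as $\mu[a,b]$, and apply Proposition~\ref{prop:relative_reuler} for the second equality. The only cosmetic difference is direction---the paper starts from $\partial m(b)$ and unwinds toward $\chi$, whereas you start from $\chi$ and arrive at $\partial m(b)$---and your extra discussion of how one might prove Hall's theorem is more than the paper provides, since it simply cites the result.
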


The proof is based on the following observation.

\begin{lem}[Philip Hall's Theorem, Prop 3.8.5 \cite{stanley_2011}]  \label{lem:Hall}
For $a \leq b$ in $P$, let $n_d(a,b)$ be the number of chains of length $d$ starting at $a$ and ending at $b$.
Then,
    $$\mu [a,b] = \sum_{\revision{d \geq 0}} (-1)^d \cdot n_d(a,b).$$
\end{lem}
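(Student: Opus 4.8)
The plan is to carry out the computation inside the $\Z$-incidence algebra $\Inc(P)$, realizing $\sum_{d \geq 0}(-1)^d n_d(a,b)$ as the value at $[a,b]$ of the inverse of the zeta function, which is $\mu$ by definition. The first step is to introduce the \emph{truncated zeta function} $\bar\zeta := \zeta - \One \in \Inc(P)$, so that $\bar\zeta[a,b] = 1$ when $a < b$ and $\bar\zeta[a,b] = 0$ when $a = b$. I would then show, by induction on $d \geq 0$, that the $d$-fold convolution power $\bar\zeta^{\ast d}$ (with the convention $\bar\zeta^{\ast 0} := \One$) satisfies $\bar\zeta^{\ast d}[a,b] = n_d(a,b)$ for all $a \leq b$. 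The cases $d = 0$ and $d = 1$ are immediate from the description of $\bar\zeta$ above together with $n_0(a,b) = \One[a,b]$. For the inductive step, unwinding the definition of convolution gives
$$\bar\zeta^{\ast d}[a,b] = \sum_{c : a \leq c \leq b} \bar\zeta^{\ast(d-1)}[a,c] \cdot \bar\zeta[c,b] = \sum_{c : a \leq c < b} n_{d-1}(a,c),$$
and the right-hand side counts chains of length $d$ from $a$ to $b$ by recording the penultimate element $c$ of the chain together with its initial segment $a = a_0 < \cdots < a_{d-1} = c$; this assignment is a bijection onto pairs consisting of a $c$ with $a \leq c < b$ and a chain of length $d-1$ from $a$ to $c$, so the sum equals $n_d(a,b)$.

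Next I would note that because $P$ is finite, each interval $[a,b]$ admits chains of bounded length, so $\bar\zeta^{\ast d}[a,b] = 0$ as soon as $d$ exceeds the length of a longest chain in $[a,b]$. Consequently the formal sum $\eta := \sum_{d \geq 0}(-1)^d \bar\zeta^{\ast d}$ has only finitely many nonzero terms on each interval and thus defines an element of $\Inc(P)$ with $\eta[a,b] = \sum_{d \geq 0}(-1)^d n_d(a,b)$. The last step is to verify that $\eta$ is the inverse of $\zeta$. Writing $\zeta = \One + \bar\zeta$, the product telescopes:
$$\zeta \ast \eta = \eta + \bar\zeta \ast \eta = \sum_{d \geq 0}(-1)^d \bar\zeta^{\ast d} + \sum_{d \geq 0}(-1)^d \bar\zeta^{\ast (d+1)} = \bar\zeta^{\ast 0} = \One,$$
and the symmetric computation gives $\eta \ast \zeta = \One$; reindexing the sums is legitimate since only finitely many terms are nonzero on any fixed interval. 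By uniqueness of the inverse in $\Inc(P)$ we conclude $\eta = \zeta^{-1} = \mu$, and evaluating at $[a,b]$ yields the stated formula.

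The argument has no serious obstacle; the only points requiring care are the two finiteness inputs. First, the combinatorial bijection in the induction must be checked including the degenerate case $d = 1$, where an ``initial segment of length $0$'' is the one-element chain at $a$, recovering $n_1(a,b) = \sum_{a \leq c < b} \One[a,c] = 1$ exactly when $a < b$. Second, one must confirm the local nilpotence of $\bar\zeta$ — that its convolution powers eventually vanish on every interval — which is precisely what makes the geometric series $\sum_{d \geq 0}(-1)^d \bar\zeta^{\ast d}$ converge in $\Inc(P)$ and what justifies the telescoping. A self-contained alternative, avoiding convolution powers, is a direct induction on the length of $[a,b]$ against the defining recursion $\sum_{a \leq c \leq b}\mu[a,c] = \One[a,b]$, but the generating-series argument above is shorter and makes transparent why Hall's theorem is a signed count of chains.
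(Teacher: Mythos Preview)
Your proof is correct. Note that the paper does not supply its own proof of this lemma; it is stated with a citation to Stanley (Proposition~3.8.5) and used as a black box in the proof of Theorem~\ref{thm:euler}. Your argument---interpreting $n_d(a,b)$ as $(\zeta-\One)^{\ast d}[a,b]$ and summing the geometric series for $(\One + \bar\zeta)^{-1}$ inside $\Inc(P)$---is precisely the standard proof found in Stanley, so there is nothing to contrast.
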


\revision{Note that $n_d(a,b)$ is the number of $d$-simplices in $\Delta P$ with minimal element $a$ and maximal element $b$.
Further, the sum of $n_d(a,b)$, over all $a \leq b$, is the total number of $d$-simplices in~$\Delta P$.}

\begin{proof}[Proof of Theorem~\ref{thm:euler}]
We have
    \begin{align*}
   \partial m(b) 
   &= \sum_{a : a \leq b} m(a) \cdot \mu[a,b] 
       && \text{by Equation~\eqref{eq:inverion_formula}} \\[1ex]
   &= \sum_{a : a \leq b} \big[ M(a) \big] \cdot \mu[a,b] 
       && \text{by Definition~\ref{defn:dimension}} \\[1ex]
   &= \sum_{a : a \leq b} \big[ M(a) \big] \cdot \left( \sum_{d \geq 0} (-1)^d \cdot n_d(a,b) \right)
       && \text{by Lemma~\ref{lem:Hall}} \\[1ex]
   &= \sum_{d \geq 0} \sum_{\substack{\sigma \in \Delta P \\ \max(\sigma) = b \\ \dim(\sigma) = d}} 
       (-1)^d \big[M\left( \min(\sigma) \right) \big] \\[1ex]
   &= \chi \big( \Delta P_{\leq b}, \Delta P_{< b}; \cosheaf{M} \big)
       && \text{by Definition~\ref{defn:reuler}} \\[1ex]
   &= \sum_{d \geq 0} (-1)^d \big[ \M_\ast M(b) \big] 
       && \text{by Proposition~\ref{prop:relative_reuler}}
\end{align*}
\end{proof}

In the M\"obius chain complex $\Ch_\bullet M(b)$, only simplices in $\Delta P_{\leq b} \setminus \Delta P_{< b}$ are relevant. For distinct elements $a$ and $b$, we have:
\begin{equation} \label{eq:intersection}
    (\Delta P_{\leq a} \setminus \Delta P_{< a}) \cap (\Delta P_{\leq b} \setminus \Delta P_{< b}) = \emptyset
\end{equation}
Thus, each simplex $\sigma \in \Delta P$ appears in the M\"obius chain complex for exactly one element of $P$: its maximum element $\max(\sigma)$. This observation leads to the following corollary of our main theorem.

\begin{cor}\label{cor:total_homology}
Let $m : P \to K(\cat)$ be the dimension function of a $P$-module $M : P \to \cat$.
Then,
    $$\sum_{a \in P} \partial m(a) = \revision{\chi \big( \Delta P; \cosheaf{M} \big)  =} \sum_{d \geq 
 0} (-1)^d \big[ H_d ( \Delta P; \cosheaf{M} ) \big ].$$
\end{cor}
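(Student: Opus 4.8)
The plan is to reduce Corollary~\ref{cor:total_homology} to Theorem~\ref{thm:euler} by summing over all elements of $P$, using the disjointness observation in Equation~\eqref{eq:intersection} to recognize that the global Euler characteristic $\chi(\Delta P; \cosheaf{M})$ decomposes as a sum of the relative Euler characteristics $\chi(\Delta P_{\leq b}, \Delta P_{<b}; \cosheaf{M})$ over $b \in P$. Concretely, I would start from the definition
\[
\chi(\Delta P; \cosheaf{M}) = \sum_{d \geq 0} \sum_{\substack{\sigma \in \Delta P \\ \dim(\sigma) = d}} (-1)^d \big[ \cosheaf{M}(\sigma) \big],
\]
and partition the index set: every simplex $\sigma \in \Delta P$ has a well-defined maximal element $\max(\sigma) \in P$, so the simplices are partitioned into the blocks $\{\sigma : \max(\sigma) = b\} = \Delta P_{\leq b} \setminus \Delta P_{<b}$ as $b$ ranges over $P$. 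Regrouping the double sum according to this partition gives
\[
\chi(\Delta P; \cosheaf{M}) = \sum_{b \in P} \sum_{d \geq 0} \sum_{\substack{\sigma \in \Delta P_{\leq b} \setminus \Delta P_{<b} \\ \dim(\sigma) = d}} (-1)^d \big[ \cosheaf{M}(\sigma) \big] = \sum_{b \in P} \chi(\Delta P_{\leq b}, \Delta P_{<b}; \cosheaf{M}),
\]
where the last equality is exactly Definition~\ref{defn:reuler}.

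Then I would invoke Theorem~\ref{thm:euler}, which identifies each summand $\chi(\Delta P_{\leq b}, \Delta P_{<b}; \cosheaf{M})$ with $\partial m(b)$, so that $\chi(\Delta P; \cosheaf{M}) = \sum_{a \in P} \partial m(a)$, establishing the first claimed equality. For the second equality, I would simply apply Proposition~\ref{prop:cosheaf_euler} to the cosheaf $\cosheaf{M}$ over the full complex $\Delta P$, which directly gives $\chi(\Delta P; \cosheaf{M}) = \sum_{d \geq 0} (-1)^d \big[ H_d(\Delta P; \cosheaf{M}) \big]$.

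I do not anticipate a genuine obstacle here: the content is entirely bookkeeping, and the only point requiring a moment of care is the justification that the blocks $\Delta P_{\leq b} \setminus \Delta P_{<b}$ genuinely partition the simplices of $\Delta P$ — that is, that they are pairwise disjoint (which is Equation~\eqref{eq:intersection}) and that they cover everything (every nonempty chain has a maximum). One could alternatively phrase the proof more formally via a telescoping/additivity argument for Euler characteristics across a filtration of $\Delta P$, but the direct partition-of-simplices argument is cleaner and self-contained. A minor sanity check worth including is that the relative homology groups $H_d(\Delta P_{\leq b}, \Delta P_{<b}; \cosheaf{M})$ are precisely $\M_d M(b)$, so the statement can equivalently be read as $\sum_{a \in P} \partial m(a) = \sum_{d \geq 0}(-1)^d [H_d(\Delta P; \cosheaf{M})]$ with the left side also equal to $\sum_{a \in P} \sum_{d \geq 0} (-1)^d [\M_d M(a)]$, tying the global invariant back to the local M\"obius homology.
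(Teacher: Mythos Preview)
Your proposal is correct and follows essentially the same approach as the paper: both arguments partition the simplices of $\Delta P$ by their maximum element (using Equation~\eqref{eq:intersection}), identify each block's contribution as the relative Euler characteristic via Definition~\ref{defn:reuler}, invoke Theorem~\ref{thm:euler} for the first equality, and Proposition~\ref{prop:cosheaf_euler} for the second. The only cosmetic difference is that the paper starts from $\sum_{a\in P}\partial m(a)$ and works toward $\chi(\Delta P;\cosheaf{M})$, whereas you start from $\chi(\Delta P;\cosheaf{M})$ and work outward to both sides.
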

\begin{proof}
\revision{
We have
    \begin{align*}
    \sum_{a \in P} \partial m(a) 
    &= \sum_{a \in P} \chi(\Delta P_{\leq a}, \Delta P_{< a}; \cosheaf{M}) 
        && \text{by Theorem~\ref{thm:euler}} \\[1ex]
    &= \sum_{a \in P} \sum_{d \geq 0} \sum_{\substack{\sigma \in \Delta P \\ \max(\sigma) = a \\ \dim(\sigma) = d}} 
        (-1)^d \big[M\left( \min(\sigma) \right) \big] 
        && \text{by Definition~\ref{defn:reuler}} \\[1ex]
    &= \sum_{d \geq 0} \sum_{\substack{\sigma \in \Delta P \\ \dim(\sigma) = d}} 
        (-1)^d \big[M\left( \min(\sigma) \right) \big] 
        && \text{by Equation~\eqref{eq:intersection}} \\[1ex]
    &= \chi(\Delta P; \cosheaf{M}) 
        && \text{by Definition~\ref{defn:total_euler}} \\[1ex]
    &= \sum_{d \geq 0} (-1)^d \big[ H_d ( \Delta P; \cosheaf{M} ) \big]
        && \text{by Proposition~\ref{prop:cosheaf_euler}}
\end{align*}
}
\end{proof}

We have seen how M\"obius homology is a richer invariant than the M\"obius inversion.
However, M\"obius homology is not a complete invariant of a $P$-module.

\begin{ex}
Choose an element $\mu \in \field$ and consider the module $M_\mu$ in Figure~\ref{fig:buchet_escolar}.
For the choice of a second element $\nu \in \field$, the two modules
$M_\mu$ and $M_\nu$ are not isomorphic~\cite{BuchetE22}.
However, by direct calculation one can show that the element-wise M\"obius homology groups 
for the two modules are isomorphic.
Thus, M\"obius homology is not a complete invariant.
\begin{figure}
\begin{equation*}
\begin{tikzcd}[ampersand replacement=\&]
    \field  \ar[rr,"\footnotesize\begin{pmatrix} 1 \\ 0 \end{pmatrix}"] 
    \&\& \field^2 \ar[rr, "\footnotesize\begin{pmatrix} 1 \end{pmatrix}"] 
    \&\& \field^2 
    \ar[rr, "\footnotesize{\begin{pmatrix} 1 & 0 \end{pmatrix}}"]
    \&\& \field \ar[rr]
    \&\&  0 \\  \\
    0 \ar[rr] \ar[uu] \&\& \field \ar[rr, "\footnotesize{\begin{pmatrix} 0 \\ 1 \end{pmatrix}}"]
    \ar[uu, "\footnotesize{\begin{pmatrix} 1 \\ \mu \end{pmatrix}}"]
    \&\&  \field^2 \ar[rr, "\footnotesize{\begin{pmatrix} 1 \end{pmatrix}}"]
    \ar[uu, "\footnotesize{\begin{pmatrix} 1  & 1 \\ 1 & \mu \end{pmatrix}}"]
    \&\& \field^2 \ar[rr, "\footnotesize{\begin{pmatrix} 0 & 1 \end{pmatrix}}"]
    \ar[uu, "\footnotesize{\begin{pmatrix} 1 & 1 \end{pmatrix}}"]
    \&\& \field \ar[uu]
\end{tikzcd}
\end{equation*}
\caption{$M_\mu$ is a $P$-module in $\vec$ for every $\mu \in \field$.}
\label{fig:buchet_escolar}
\end{figure}
\end{ex}

\section{Rota's Galois Connection Theorem}
This section establishes a homological version of Rota's Galois connection theorem (Theorem~\ref{thm:rota_homology})
and explores two immediate consequences.
We start with Rota's original statement
relating the two M\"obius functions involved in a Galois connection.

\begin{thm}[Rota's Galois Connection Theorem~\cite{gian1964foundations}]
\label{thm:rota_original}
Let $P$ and $Q$ be finite posets and $f : P \leftrightarrows Q : g$ a Galois connection.
Then for any $a \in P$ and $x \in Q$,
	$$\sum_{b: f(b) = x} \mu_P(a, b) = \sum_{y: g(y) = a} \mu_Q (x, y).$$
\end{thm}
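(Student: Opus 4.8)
The plan is to deduce Rota's Galois Connection Theorem as a consequence of the homological version, Theorem~\ref{thm:rota_homology}, combined with the Euler characteristic theorem, Theorem~\ref{thm:euler}. However, since the reader is seeing the statement before that machinery is fully in hand, let me instead sketch a more self-contained route that only uses Philip Hall's Theorem (Lemma~\ref{lem:Hall}) and the chain homotopy equivalence of Proposition~\ref{prop:chain_eq}. The key idea is to interpret both sides as Euler characteristics of chain complexes of $\Z$-modules built from fibers of $f$ and $g$, and then to produce an explicit chain homotopy equivalence between those complexes induced by $\Delta f$ and $\Delta g$.

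First I would fix $a \in P$ and $x \in Q$ and rewrite the left-hand side. By Philip Hall's Theorem, $\mu_P(a,b) = \sum_{d \geq 0} (-1)^d n_d^P(a,b)$, where $n_d^P(a,b)$ counts $d$-chains in $P$ from $a$ to $b$. Summing over all $b$ with $f(b) = x$ gives $\sum_{b: f(b)=x} \mu_P(a,b) = \sum_{d \geq 0}(-1)^d N_d^P$, where $N_d^P$ is the number of $d$-simplices $\sigma$ of $\Delta P$ with $\min(\sigma) = a$ and $f(\max(\sigma)) = x$; equivalently, by the adjunction, chains starting at $a$ that stay inside the fiber-related region $\{b : f(b) = x\}$. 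Symmetrically, the right-hand side equals $\sum_{d \geq 0}(-1)^d N_d^Q$, where $N_d^Q$ counts $d$-simplices $\tau$ of $\Delta Q$ with $g(\min(\tau)) = a$ and $\max(\tau) = x$. So the whole statement reduces to showing that two sub-chain-complexes — one carved out of $C_\bullet(\Delta P; \Z)$, one out of $C_\bullet(\Delta Q; \Z)$ — have equal Euler characteristics.

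The main step, and the main obstacle, is to set up the right pair of relative chain complexes and check that the chain maps $\Delta f_\bullet$ and $\Delta g_\bullet$ from Proposition~\ref{prop:chain_eq} restrict to them. Concretely, I would consider on the $P$-side the relative complex $C_\bullet\big(\Delta(P_{\leq ?} \cap f^{-1}(x)_{\ldots}), \ldots; \Z\big)$ — more precisely, the complex spanned by chains in $P$ whose minimum is $\geq a$ and whose image under $f$ has maximum $\leq x$, modulo those where the minimum is $> a$ or the $f$-image maximum is $< x$ — and the analogous complex on the $Q$-side using $g$. The adjunction $f(b) \leq x \iff b \leq g(x)$ is exactly what makes the fiber conditions match up after applying $f$ or $g$, and one must verify that the homotopy $H$ constructed in Appendix~\ref{sec:appendix_one} preserves these subcomplexes. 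Once that compatibility is established, the restricted complexes are chain homotopy equivalent, hence have isomorphic homology, hence equal Euler characteristics, and the two Euler characteristics are precisely the two sides of Rota's identity.

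An alternative, cleaner packaging — which I would actually prefer to present — is to apply Theorem~\ref{thm:euler} directly. Take $\cat = \vec$ and on $P$ let $M$ be the representation that is $\field^{\mu_P(a, -)_{\geq 0}} \ominus \field^{(-\mu_P(a,-))_{\geq 0}}$... but since Grothendieck classes can be negative this needs care; instead take $M$ to be the constant functor $\field$ on the up-set $\{b : b \geq a\}$ pushed forward, so that $\partial m$ recovers $\mu_P(a,-)$, and similarly push forward along $g$ on the $Q$-side. Then Theorem~\ref{thm:rota_homology} identifies the M\"obius homologies on the nose, and taking Euler characteristics via Theorem~\ref{thm:euler} yields the stated equality of sums of M\"obius function values. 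I expect the delicate point either way to be bookkeeping the strict-versus-nonstrict boundary conditions so that the fibers of $f$ and $g$ line up correctly under the adjunction.
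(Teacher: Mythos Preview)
The paper does not actually prove Theorem~\ref{thm:rota_original}; it is quoted as Rota's classical result, and the paper only argues that Theorem~\ref{thm:rota_homology} decategorifies to the equivalent formulation Theorem~\ref{thm:rota}. So your first instinct --- deduce it from Theorem~\ref{thm:rota_homology} via Euler characteristics --- is exactly the route the paper takes (to Theorem~\ref{thm:rota}), and is the right way to package the argument here.

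That said, your concrete attempts to carry this out both have gaps. In your third approach, the module $M=\field^{\uparrow a}$ has dimension function $m=\zeta_P(a,-)$, whose M\"obius inversion is $\partial m=\One(a,-)=\delta_a$, \emph{not} $\mu_P(a,-)$. The module you actually want is the skyscraper at~$a$ (so $m=\delta_a$ and $\partial m(b)=\mu_P(a,b)$); then $N=M\circ g$ has $n=\delta_a\circ g$, and the decategorified Theorem~\ref{thm:rota_homology} yields
\[
\sum_{b:\,f(b)=y}\mu_P(a,b)\;=\;\sum_{x:\,g(x)=a}\mu_Q(x,y),
\]
which is Rota's identity (up to the labelling of variables). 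In your second approach, the claim that $\Delta f_\bullet$ and $\Delta g_\bullet$ restrict to the relative complexes you describe is where the argument breaks: a chain $\sigma$ in $P$ with $\min(\sigma)=a$ is sent by $\Delta f$ to a chain in $Q$ with minimum $f(a)$, and there is no reason for $f(a)$ to satisfy the $Q$-side constraint you need (your $N_d^Q$ description is also reversed --- chains counted on the $Q$-side have $\min(\tau)=x$ and $g(\max\tau)=a$, not the other way around). This is precisely why the paper's proof of Theorem~\ref{thm:rota_homology} works with the asymmetric pair $(\Delta f^{-1}_{\leq y},\Delta f^{-1}_{<y})$ and the cosheaf $\cosheaf{M}$, rather than trying to match two ``doubly relative'' complexes of constant coefficients directly.
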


The following statement by G\"ulem and McCleary
relates the two M\"obius inversions involved in a Galois connection and is equivalent to 
Theorem~\ref{thm:rota_original}
when the functions involved are valued in \revision{any abelian group}.

\begin{thm}[Rota's Galois Connection Theorem~\cite{GulenMcCleary}]
\label{thm:rota}
Let $m : P \to \Z$ and $n : Q \to \Z$ be two functions and $f : P \leftrightarrows Q :  g$
a Galois connection. 
If $n = g \circ m$, then for all $y \in Q$,
$$\partial n(y)  = \sum_{b \in f^{-1}(y)} \partial m(b).$$
\end{thm}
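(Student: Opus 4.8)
The plan is to derive Theorem~\ref{thm:rota} directly from the defining property of M\"obius inversion, using the Galois connection axiom to manipulate the indexing sets. Recall that $\partial n$ is the unique function on $Q$ satisfying $\sum_{y' \leq y} \partial n(y') = n(y)$ for all $y \in Q$; similarly $\partial m$ is characterized by $\sum_{b' \leq b} \partial m(b') = m(b)$. So it suffices to check that the candidate function $\widetilde{\partial n}(y) := \sum_{b \in f^{-1}(y)} \partial m(b)$ satisfies the summation identity that uniquely characterizes $\partial n$, namely $\sum_{y' \leq y} \widetilde{\partial n}(y') = n(y)$ for all $y \in Q$. Since $n = m \circ g$, the target value is $n(y) = m(g(y))$.

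First I would compute, for fixed $y \in Q$,
\begin{equation*}
\sum_{y' \leq y} \widetilde{\partial n}(y') = \sum_{y' \leq y} \ \sum_{b \in f^{-1}(y')} \partial m(b) = \sum_{b \,:\, f(b) \leq y} \partial m(b),
\end{equation*}
where the last equality simply regroups the double sum: every $b \in P$ with $f(b) \leq y$ is counted exactly once, in the term $y' = f(b)$. The key step is now to recognize that the condition $f(b) \leq y$ is, by the Galois connection axiom, equivalent to $b \leq g(y)$. Hence
\begin{equation*}
\sum_{b \,:\, f(b) \leq y} \partial m(b) = \sum_{b \,:\, b \leq g(y)} \partial m(b) = m(g(y)) = n(y),
\end{equation*}
using the defining property of $\partial m$ at the element $g(y) \in P$ in the middle equality and the hypothesis $n = m \circ g$ at the end. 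By uniqueness of the M\"obius inversion (established right after the definition of $\partial f$ in the excerpt), $\widetilde{\partial n} = \partial n$, which is exactly the claimed formula.

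The argument has no real obstacle: the only point requiring care is the interchange/regrouping of the nested sums and the observation that $f^{-1}$ partitions $P$ as $y'$ ranges over $Q$, so that $\bigsqcup_{y' \leq y} f^{-1}(y') = \{ b \in P : f(b) \leq y \}$. The substantive input is the single application of the Galois connection axiom $f(b) \leq y \iff b \leq g(y)$, which is what converts a sum over a fiber condition in $Q$ into a principal down-set in $P$. Everything else is the uniqueness characterization of M\"obius inversion already recorded in the Background section. If one instead wanted a more conceptual proof, one could phrase it via the zeta and M\"obius functions and reduce to Theorem~\ref{thm:rota_original} by pairing both sides against $m$, but the direct verification above is shorter and self-contained.
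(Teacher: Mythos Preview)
Your proof is correct. The only cosmetic point is that the statement in the paper writes $n = g \circ m$, which is a typo (the composition does not type-check); you silently corrected this to $n = m \circ g$, which is the intended hypothesis.

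The paper does not give a self-contained proof of Theorem~\ref{thm:rota}: it attributes the statement to~\cite{GulenMcCleary} and remarks that it is equivalent to Rota's original Theorem~\ref{thm:rota_original}. The paper does, however, recover the statement a posteriori by decategorifying Theorem~\ref{thm:rota_homology}: one takes Euler characteristics on both sides of the isomorphism $\M_\ast N(y) \cong H_\ast(\Delta f^{-1}_{\leq y}, \Delta f^{-1}_{< y}; \cosheaf{M})$, uses Theorem~\ref{thm:euler} on the left, and on the right uses the decomposition $\Delta f^{-1}_{\leq y} \setminus \Delta f^{-1}_{< y} = \bigsqcup_{b \in f^{-1}(y)} (\Delta P_{\leq b} \setminus \Delta P_{< b})$. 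Your argument is genuinely different and far more elementary: it bypasses order complexes, cosheaves, and homology entirely, and reduces everything to the uniqueness of M\"obius inversion plus a single invocation of the adjunction $f(b) \leq y \iff b \leq g(y)$. What your approach buys is a two-line proof with no prerequisites beyond the Background section; what the paper's route buys is the demonstration that the combinatorial identity is the shadow of a homological one, which is the point of the paper.
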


Given a Galois connection $f : P \leftrightarrows Q : g$ and an element $y \in Q$, consider the following two subcomplexes of $\Delta P$:
    \begin{align*}
    \Delta f^{-1}_{\leq y} := \big\{\sigma \in \Delta P: f(\max(\sigma)) \leq y \big \}
    &&
    \Delta f^{-1}_{< y} := \big\{\sigma \in \Delta P: f(\max(\sigma)) < y \big \}.
    \end{align*}
We now state our theorem and argue how it categorifies Rota's Galois connection theorem.
The proof of the theorem is presented in the following subsection.


\begin{thm}[Homological Rota's Galois Connection Theorem]
\label{thm:rota_homology}
For a module $M : P \to \cat$ and a Galois connection $f : P \leftrightarrows Q : g$, let 
the module $N := M \circ g: Q \to \cat$.
For every $y \in Q$, there is a canonical isomorphism
    \begin{equation}
    \label{eq:galois_homology}
    \M_\ast N (y) 
    \cong H_\ast \big( \Delta f^{-1}_{\leq y}, \Delta f^{-1}_{< y}; \cosheaf{M} \big).
    \end{equation}
\end{thm}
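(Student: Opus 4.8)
The plan is to relate the two relative chain complexes appearing in \eqref{eq:galois_homology} directly, using the chain homotopy equivalence between $\Delta P$ and $\Delta Q$ constructed in \Cref{prop:chain_eq}, but carried out at the level of cosheaves rather than constant coefficients. First I would unwind the definitions: the left side $\M_\ast N(y)$ is the homology of $C_\bullet(\Delta Q_{\leq y}, \Delta Q_{<y}; \cosheaf{N})$ where $\cosheaf{N} = N \circ \min = M \circ g \circ \min$. The right side is the homology of $C_\bullet(\Delta f^{-1}_{\leq y}, \Delta f^{-1}_{<y}; \cosheaf{M})$, and here one should first observe that $f$ induces a simplicial map from $\Delta f^{-1}_{\leq y}$ to $\Delta Q_{\leq y}$ (since a chain $\sigma$ with $f(\max\sigma) \leq y$ maps to a chain in $Q$ with maximum $\leq y$) which restricts to $\Delta f^{-1}_{<y} \to \Delta Q_{<y}$; dually $g$ pulls chains in $\Delta Q_{\leq y}$ back into $\Delta f^{-1}_{\leq y}$, because for a chain $x_0 < \cdots < x_n$ with $x_n \leq y$ we have $\max(g(\sigma)) = g(x_n)$ and $f(g(x_n)) \leq x_n \leq y$ by \Cref{lem:galois}(ii). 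So the setup of \Cref{prop:chain_eq} applies verbatim to the pairs of subcomplexes.

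The key step is to check that the maps $\Delta f_\bullet$, $\Delta g_\bullet$, and the homotopy $h$ from the appendix are all morphisms of cosheaf chain complexes once we attach the coefficient systems $\cosheaf{M}$ and $\cosheaf{N}$. On a simplex $\sigma = (a_0 < \cdots < a_n)$ of $\Delta P$, $\Delta f$ sends $\sigma$ to the chain $(f(a_0) \leq \cdots \leq f(a_n))$, keeping only the distinct values; the induced map on the $\sigma$-summand must go $\cosheaf{M}(\sigma) = M(a_0) \to N(\min \Delta f(\sigma)) = M(g(f(a_0)))$, which we take to be $M$ applied to the unit inequality $a_0 \leq g(f(a_0))$ of \Cref{lem:galois}(i). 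Dually, $\Delta g$ sends a simplex $(x_0 < \cdots)$ of $\Delta Q$ with $\min = x_0$ to a chain in $P$ with minimum $g(x_0)$, and the coefficient map $\cosheaf{N}(\tau) = M(g(x_0)) \to \cosheaf{M}(\Delta g(\tau)) = M(g(x_0))$ is the identity. One then verifies that with these choices $\Delta f_\bullet$ and $\Delta g_\bullet$ commute with the cosheaf boundary operators --- this is exactly where the naturality of $M$ on the unit inequalities, together with the combinatorial sign bookkeeping already done for constant coefficients, is used --- and similarly that the explicit chain homotopy $h$ from \Cref{sec:appendix_one} lifts, since $h$ is built from face/degeneracy-type operations that only ever move the minimum of a simplex along inequalities of the form $a \leq g(f(a))$, on which $M$ is functorial.

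Having lifted the equivalence, the composite $\Delta g \circ \Delta f$ on $\Delta f^{-1}_{\leq y}$ is chain homotopic to the identity with $\cosheaf{M}$-coefficients, and $\Delta f \circ \Delta g$ on $\Delta Q_{\leq y}$ is chain homotopic to the identity with $\cosheaf{N}$-coefficients; since all these maps respect the sub-pairs indexed by $<y$, we get mutually inverse (up to homotopy) maps on the relative complexes, hence an isomorphism on relative homology, which is precisely \eqref{eq:galois_homology}. Canonicity comes from the fact that $\Delta f_\bullet$ is a genuine chain map, so it descends to a well-defined map on homology independent of the homotopy.

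I expect the main obstacle to be the verification that the appendix's chain homotopy $h$ --- not just the chain maps $\Delta f_\bullet, \Delta g_\bullet$ --- is compatible with the cosheaf coefficients. The chain maps are easy: they move minima only via $a \leq g(f(a))$, and $M$ is a functor. But a typical acyclic-carrier or prism-style homotopy inserts extra vertices and reshuffles chains, and one must check that every elementary move it performs either fixes the minimum of the simplex or changes it along an inequality on which the coefficient assignment is defined and coherent; if the appendix's $h$ happens to be phrased so that some intermediate simplex has a smaller minimum not of the form $g(f(\cdot))$, one needs an auxiliary naturality argument (or a mild reformulation of $h$) to push the coefficients through. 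Once that bookkeeping is settled, the rest is formal.
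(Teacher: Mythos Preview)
Your proposal is correct and matches the paper's proof essentially line for line: the paper defines $\Delta \hat f_\bullet$ and $\Delta \hat g_\bullet$ on coefficients exactly as you do (via the unit $a_0 \leq g(f(a_0))$ for $f$ and the identity for $g$), lifts the appendix homotopies $\phi_n,\psi_n$ to cosheaf coefficients, and then checks everything respects the $<y$ sub-pairs. Your anticipated obstacle dissolves cleanly once you look at the formulas: in $\phi_n$ every term $\tau_i = (a_0<\cdots<a_i\leq b_i\leq\cdots\leq b_n)$ has $\min\tau_i = a_0 = \min\sigma$, so the coefficient map is literally the identity on $M(a_0)$; in $\psi_n$ every term has $\min\tau_i = f(g(x_0))$, and since $g\circ f\circ g = g$ one gets $\cosheaf{N}(\tau_i) = M(g(f(g(x_0)))) = M(g(x_0)) = \cosheaf{N}(\sigma)$, again the identity.
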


Theorem~\ref{thm:rota_homology} categorifies Theorem~\ref{thm:rota} as follows.
Let $m, n : \to K(\cat)$ be the dimension functions of the modules $M$ and $N$, respectively.
By Theorem~\ref{thm:euler}, 
 $$\partial n(y) = 
 \sum_{d\geq 0} \big[ \M_d N (y) \big].$$
Note that the preimage on the
right side of Equation~\eqref{eq:galois_homology}
is the following disjoint union:
$$\Delta f^{-1}_{\leq y} \setminus \Delta f^{-1}_{< y} = \bigsqcup_{b \in f^{-1}(y)} \Delta P_{\leq b} \setminus \Delta P_{< b}.$$
This combined with Definition~\ref{defn:reuler} and Lemma~\ref{prop:relative_reuler} implies 
$$\sum_{b \in f^{-1}(y)} \partial m(b) =
\sum_{d \geq 0}(-1)^d \Big[ H_\ast \big( \Delta f^{-1}_{\leq y}, \Delta f^{-1}_{< y}; \cosheaf{M} \big) \Big].$$

\begin{rmk}
The right side of Equation~\eqref{eq:galois_homology} can be seen as the limiting term of a Leray-type spectral sequence.
This spectral sequence accounts for the differentials between the M\"obius homology objects $\big\{ \M_d M (b)  \big\}$ over all $b \in f^{-1}(y)$ and dimensions~$d$.
\end{rmk}

\paragraph{Refinements}
An immediate consequence of Theorem~\ref{thm:rota_homology}
is that M\"obius homology is invariant to refinements of the underlying poset as follows.
A \emph{refinement} of a poset~$P$ is a Galois connection $f : P \leftrightarrows Q : g$
such that $f$ is injective or, equivalently,
$g$ is surjective.
In this case, we think of $P$ as a subposet of $Q$.

\begin{cor}
Given a refinement $f : P \leftrightarrows Q  : g$ and a $P$-module $M$, let
the $Q$-module $N := M \circ g$.
For every $b \in P$,
$\M_\ast M (b) \cong 
\M_\ast N (b)$.
For every $y \in Q \setminus P$, $\M_\ast N (y) = 0$ because $\Delta f^{-1}_{\leq y} \setminus \Delta f^{-1}_{< y} = \emptyset$.
\end{cor}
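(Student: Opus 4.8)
The plan is to derive everything from Theorem~\ref{thm:rota_homology}, which already does all the real work; the Corollary is essentially a bookkeeping exercise once we understand the combinatorics of the subcomplexes $\Delta f^{-1}_{\leq y}$ and $\Delta f^{-1}_{< y}$ when $f$ is injective. First I would invoke Theorem~\ref{thm:rota_homology} to get, for every $y \in Q$, a canonical isomorphism $\M_\ast N(y) \cong H_\ast\big(\Delta f^{-1}_{\leq y}, \Delta f^{-1}_{< y}; \cosheaf{M}\big)$. The homology on the right depends only on the simplices in the set-difference $\Delta f^{-1}_{\leq y} \setminus \Delta f^{-1}_{< y}$, which consists of exactly those chains $\sigma \in \Delta P$ with $f(\max(\sigma)) = y$. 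Since $f$ is injective, there is at most one $b \in P$ with $f(b) = y$; when such a $b$ exists (i.e. $y = f(b)$ for some, necessarily unique, $b \in P$), this set-difference is precisely $\Delta P_{\leq b} \setminus \Delta P_{< b}$, and when no such $b$ exists (i.e. $y \in Q \setminus f(P)$), the set-difference is empty.

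For the first claim, take $y = f(b)$ with $b \in P$. I would check that $\Delta f^{-1}_{\leq y} \setminus \Delta f^{-1}_{< y} = \Delta P_{\leq b} \setminus \Delta P_{< b}$ as sets of simplices: a chain $\sigma$ lies in the left side iff $f(\max(\sigma)) = f(b)$, which by injectivity of $f$ is equivalent to $\max(\sigma) = b$, which is exactly the condition defining the right side. Since the relative cosheaf homology of a pair $(K, L)$ depends only on the chain complex $C_\bullet(K;\cosheaf{M})/C_\bullet(L;\cosheaf{M})$, which in turn is generated by the summands $\cosheaf{M}(\sigma)$ for $\sigma \in K \setminus L$ with boundary maps determined by the coface relations restricted to those simplices, equality of the set-differences (together with the fact that the cosheaf $\cosheaf{M}$ is the same functor $\Delta P \to \cat$ in both cases) gives an identification of the two relative chain complexes, hence $H_\ast\big(\Delta f^{-1}_{\leq y}, \Delta f^{-1}_{< y}; \cosheaf{M}\big) \cong H_\ast\big(\Delta P_{\leq b}, \Delta P_{< b}; \cosheaf{M}\big) = \M_\ast M(b)$. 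Composing with the isomorphism from Theorem~\ref{thm:rota_homology} yields $\M_\ast N(b) = \M_\ast N(f(b)) \cong \M_\ast M(b)$ — here I am using the (harmless) abuse, already present in the statement, of identifying $b \in P$ with its image $f(b) \in Q$ under the refinement. For the second claim, take $y \in Q \setminus P$, i.e. $y \notin f(P)$; then no $\sigma \in \Delta P$ satisfies $f(\max(\sigma)) = y$, so $\Delta f^{-1}_{\leq y} \setminus \Delta f^{-1}_{< y} = \emptyset$, the relative chain complex is zero in every degree, and hence $\M_\ast N(y) \cong H_\ast\big(\Delta f^{-1}_{\leq y}, \Delta f^{-1}_{< y}; \cosheaf{M}\big) = 0$.

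The only genuinely delicate point — and the one I would be most careful about — is the equivalence $f$ injective $\iff$ $g$ surjective, together with the meaning of "$P$ a subposet of $Q$''; this is precisely Lemma~\ref{lem:galois}(v), so it is already available. Everything else is a direct unwinding of definitions, and I expect no real obstacle: the boundary operators in the relative chain complexes are defined summand-wise over the simplices present, so an equality of the underlying sets of simplices automatically upgrades to an isomorphism of chain complexes and hence of homology. I would keep the written proof to three or four sentences, citing Theorem~\ref{thm:rota_homology}, the set-level computation of $\Delta f^{-1}_{\leq y} \setminus \Delta f^{-1}_{< y}$, and Lemma~\ref{lem:galois}(v).
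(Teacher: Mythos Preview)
Your proposal is correct and follows exactly the approach the paper intends: the paper does not give a separate proof of this corollary but presents it as an immediate consequence of Theorem~\ref{thm:rota_homology}, with the only justification being the clause ``because $\Delta f^{-1}_{\leq y} \setminus \Delta f^{-1}_{< y} = \emptyset$'' embedded in the statement itself. Your unwinding of the set-difference via injectivity of $f$ (so that $f(\max(\sigma)) = y$ forces $\max(\sigma) = b$ when $y = f(b)$, and is impossible when $y \notin f(P)$) is precisely the argument the paper leaves implicit.
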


\paragraph{Bounding Homological Dimension}
When the underlying poset of a $P$-module $M$ is a finite distributive lattice, 
we show in Corollary~\ref{cor:depth} that there exists a bound on the highest non-trivial dimension of its M\"obius homology.

Recall that a \textit{lattice} is a poset in which every pair of elements has a meet (greatest lower bound), denoted $\wedge$, 
and a join (least upper bound), denoted $\vee$. 
A lattice is \textit{distributive} if, for all $a, b, c \in P$, 
\[
   a \vee (b \wedge c) = (a \vee b) \wedge (a \vee c).
\]
If $a, c \in P$ with $a < c$, we say that \emph{$c$ covers $a$}, denoted $a \lessdot c$, 
if there is no $b$ such that $a < b < c$. 
An element $a \in P$ is \emph{meet-irreducible} if it is not the maximum element and 
there are no distinct elements $b, c \neq a$ such that $a = b \wedge c$.

Now, fix an element $b \in P$, and let $C_b$ be the set of all elements of $P$ covered by $b$:
\[
  C_b := \{b' \in P : b' \lessdot b\}.
\]
The set $C_b$ generates a sublattice of $P$, called the 
\emph{meet-generated sublattice}:
\[
   \mathcal{M}(C_b) := \{b\} \cup \{ a \in P : a = b_1 \wedge \cdots \wedge b_k 
   \; \text{for some}\; b_1, \dots, b_k \in C_b \}.
\]
Observe that $\mathcal{M}(C_b)$ has a minimum element, denoted $0_b$, which is the meet of all elements in~$C_b$, and its maximum element is $b$. 
Additionally, the set of meet-irreducible elements of $\mathcal{M}(C_b)$ is precisely $C_b$.

For every $b \in P$, there is a canonical Galois connection 
\[
   f : P_{\leq b} \leftrightarrows \mathcal{M}(C_b) : g
\]
defined as follows: for $a \in P_{\leq b}$, let $A = \{b\} \cup \{ b' \in C_b : a \leq b' \}$. 
Define $f(a) = \wedge A$, and let $g$ be the inclusion map. 
It is straightforward to verify that we indeed have a Galois connection. 
See Figure~\ref{fig:dim_collapse} for an illustration of this Galois connection.

Before stating and proving the corollary, we need one additional observation. 
In a distributive lattice, every maximal chain has the same length, 
and that length is given by the number of meet-irreducible elements \cite[Lemma~2, p.~59]{birkhoff1940lattice}. 
Consequently, the length of every maximal chain between $0_b$ and $b$ in $\mathcal{M}(C_b)$ equals the cardinality of $C_b$.

\begin{cor}\label{cor:depth}
Let $P$ be a finite distributive lattice, $b \in P$, and $n$ the number of elements covered by $b$.
Then, for any module $M : P \to \cat$, $\M_{> n} M(b) = 0.$
\end{cor}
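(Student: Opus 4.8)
The plan is to apply Theorem~\ref{thm:rota_homology} to the canonical Galois connection $f : P_{\leq b} \leftrightarrows \mathcal{M}(C_b) : g$ and thereby reduce the computation of $\M_\ast M(b)$ to a computation inside the much smaller lattice $\mathcal{M}(C_b)$. Since $g$ is the inclusion, the $\mathcal{M}(C_b)$-module $N := M \circ g$ is just the restriction of $M$, and because $f(b) = b$ (as $b$ is the maximum of both posets) and $f^{-1}(b) = \{b\}$, Theorem~\ref{thm:rota_homology} gives a canonical isomorphism $\M_\ast M(b) \cong \M_\ast N(b)$, where the right-hand side is now the M\"obius homology of $N$ at the top element of $\mathcal{M}(C_b)$. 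Note one has to be slightly careful that the M\"obius homology of $M$ at $b$ computed inside $P$ agrees with that computed inside $P_{\leq b}$; this holds because $\Delta P_{\leq b} \setminus \Delta P_{< b}$ and the chain complex $\Ch_\bullet M(b)$ only see simplices $\sigma$ with $\max(\sigma) = b$, all of whose elements lie in $P_{\leq b}$, so the relative chain complex is unchanged.

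Next I would bound the dimension of the relative chain complex computing $\M_\ast N(b)$ directly. By definition this is $C_\bullet\big(\Delta \mathcal{M}(C_b)_{\leq b}, \Delta \mathcal{M}(C_b)_{< b}; \cosheaf{N}\big)$, and a simplex contributes in dimension $d$ only if it is a chain of length $d$ in $\mathcal{M}(C_b)$ ending at $b$. The longest such chain has length equal to the length of a maximal chain from $0_b$ to $b$ in $\mathcal{M}(C_b)$, which — invoking the cited fact that in a distributive lattice every maximal chain has length equal to the number of meet-irreducibles, together with the observation that the meet-irreducibles of $\mathcal{M}(C_b)$ are exactly $C_b$ — equals $|C_b| = n$. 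Hence $C_d\big(\Delta \mathcal{M}(C_b)_{\leq b}, \Delta \mathcal{M}(C_b)_{< b}; \cosheaf{N}\big) = 0$ for $d > n$, so its homology vanishes in those degrees, and therefore $\M_{>n} M(b) = \M_{>n} N(b) = 0$.

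The main obstacle is the bookkeeping around which ambient poset is used: verifying that $\M_\ast M(b)$ is insensitive to replacing $P$ by $P_{\leq b}$, and checking that the hypotheses of Theorem~\ref{thm:rota_homology} are genuinely met for the stated Galois connection — in particular that $f$ and $g$ as defined form a Galois connection (which the excerpt asserts is ``straightforward'') and that $f^{-1}(b) = \{b\}$, so that the disjoint union $\Delta f^{-1}_{\leq b} \setminus \Delta f^{-1}_{< b} = \bigsqcup_{b' \in f^{-1}(b)} \Delta P_{\leq b'} \setminus \Delta P_{< b'}$ collapses to the single term at $b$. Once these identifications are in place, the dimension bound is immediate from the length of maximal chains in $\mathcal{M}(C_b)$, and no further computation is needed. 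A remark worth including: this shows the homological dimension of $M$ at $b$ is controlled not by the global height of $P$ but by the local combinatorics of the covers of $b$.
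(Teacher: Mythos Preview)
Your proposal is correct and follows essentially the same route as the paper: apply Theorem~\ref{thm:rota_homology} to the canonical Galois connection $f : P_{\leq b} \leftrightarrows \mathcal{M}(C_b) : g$, use $f^{-1}(b)=\{b\}$ to identify the right-hand side with $\M_\ast M(b)$, and then bound the chain length in $\mathcal{M}(C_b)$ by $|C_b|=n$. Your extra care in checking that $\M_\ast M(b)$ is unchanged when passing from $P$ to $P_{\leq b}$ is a point the paper leaves implicit, so your write-up is, if anything, slightly more complete.
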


\begin{proof}
Let $f : P_{\leq b} \leftrightarrows \mathcal{M}(C_b) : g$ be the Galois connection defined above. 
Note that $f^{-1}(b) = \{ b \}$. Let $N : \mathcal{M}(C_b) \to \cat$ be the module defined by $N = M \circ g$. 
Since the length of any chain in $\mathcal{M}(C_b)$ is at most~$n$, we have $\M_{> n} N(b) = 0$. 
The desired result follows from Theorem~\ref{thm:rota_homology}.
\end{proof}

\begin{figure}
\centering
\includegraphics[width=0.5\textwidth,page=3]{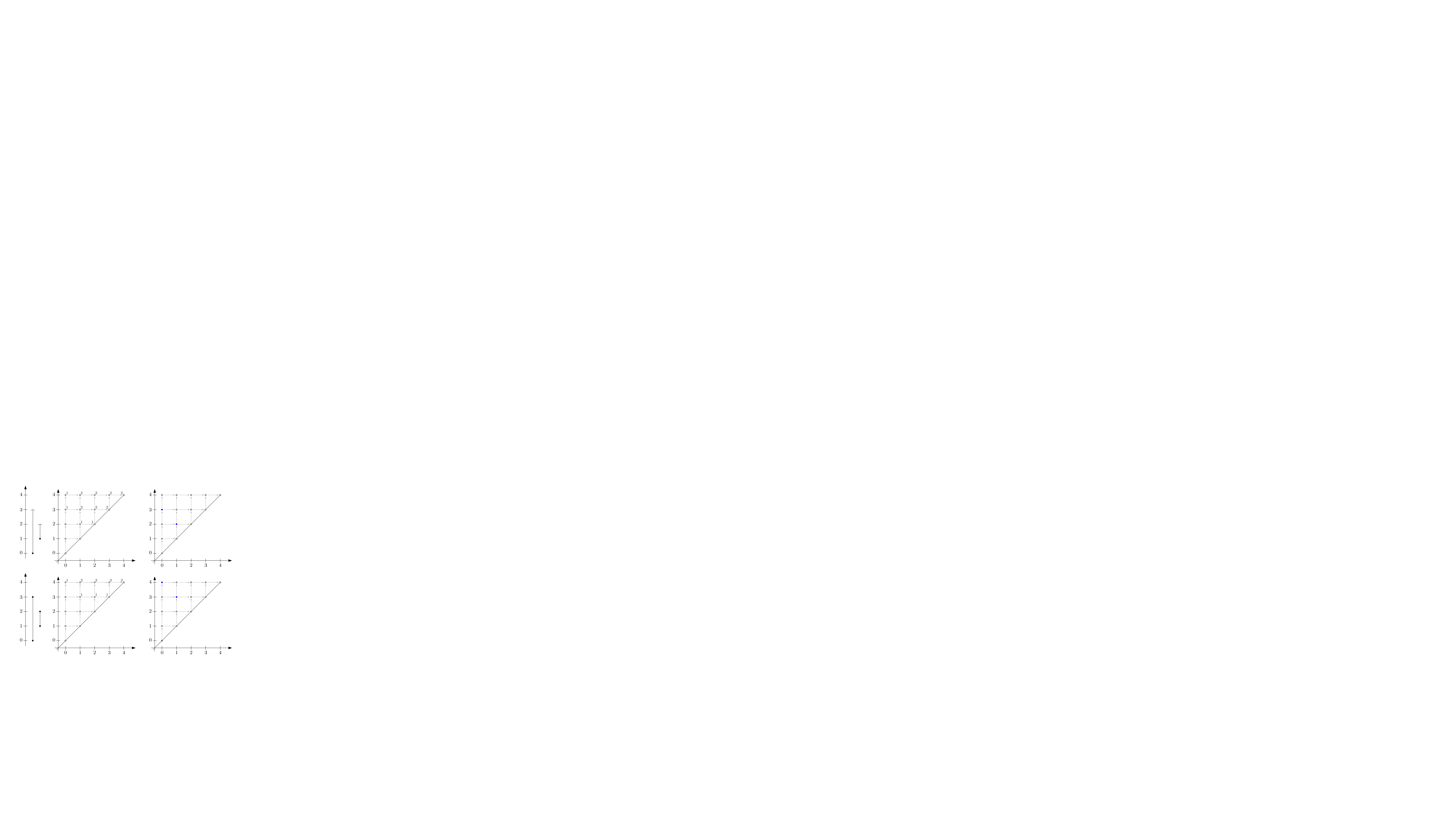}
\caption{An example of a Galois connection $f : P_{\leq b} \leftrightarrows \mathcal{M}(C_b) : g$ on a distributive lattice $P$, where $f$ is represented by blue dashed arrows and $g$ is the inclusion map.}
\label{fig:dim_collapse}
\end{figure}

\subsection{Proof of Theorem~\ref{thm:rota_homology}}
\label{sec:proof}
The proof establishes a chain homotopy equivalence between 
\[ 
C_\bullet \big( \Delta Q_{\leq y}, \Delta Q_{< y}; \cosheaf{N} \big) 
\quad \text{and} \quad 
C_\bullet \big( \Delta f^{-1}_{\leq y}, \Delta f^{-1}_{< y}; \cosheaf{M} \big).
\] 
The main idea is to extend a relative version of Proposition~\ref{prop:chain_eq}, incorporating the techniques from Lemmas~\ref{lem:left_chain_homotopy} and \ref{lem:right_chain_homotopy}, to the cosheaf setting.

The simplicial map $\Delta f$ induces a chain map 
$$\Delta \hat f_\bullet  : C_\bullet \big( \Delta f^{-1}_{\leq y}, \Delta f^{-1}_{< y}; \cosheaf{M} \big)
\to C_\bullet \big( \Delta Q_{\leq y},  \Delta Q_{< y}; \cosheaf{N} \big)$$ 
generated simplex-wise as follows.
For an $n$-simplex $\sigma \in \Delta f^{-1}_{\leq y}$, let $\tau := \Delta f (\sigma)$.
By Lemma~\ref{lem:galois}~(i),
$\min(\sigma) \leq g (\min \tau)$ in~$P$.
If $\tau$ is an $n$-simplex, define the restriction
$\Delta \hat f_n |_\sigma:
\cosheaf{M}(\sigma) \to \cosheaf{N}(\tau)$
as the morphism $M \big(\min(\sigma) \leq g( \min \tau) \big).$
If $\tau$ is not an $n$-simplex, then $\Delta \hat f_n |_\sigma$ is the zero morphism.
One easily checks that $\Delta \hat f_\bullet$ is indeed a chain map.

The simplicial map $\Delta g$ induces a chain map 
$$\Delta \hat g_\bullet  : C_\bullet \big( \Delta Q_{\leq y},  \Delta Q_{< y}; \cosheaf{N} \big) \to C_\bullet \big( \Delta f^{-1}_{\leq y}, \Delta f^{-1}_{< y}; \cosheaf{M} \big)$$
induced simplex-wise as follows.
For an $n$-simplex $\sigma \in \Delta Q_{\leq y}$, let $\tau := \Delta g (\sigma)$.
Note 
$g(\min(\sigma)) = \min \tau$.
If $\tau$ is an $n$-simplex, define the restriction
$\Delta \hat g_n |_\sigma : \cosheaf{N}(\sigma) \to \cosheaf{M}(\tau)$
as the identity morphism $M\big( g(\min(\sigma)) = \min \tau\big)$.
If $\tau$ is not an $n$-simplex, then 
$\Delta \hat g_n |_\sigma$ is the zero morphism.
One easily checks that $\Delta \hat g_\bullet$ is indeed a chain map.

\begin{lem}\label{lem:left_cosheaf_homotopy}
The composition $\Delta \hat g \circ \Delta \hat f : C_\bullet \big( \Delta f^{-1}_{\leq y}, \Delta f^{-1}_{< y}; \cosheaf{M} \big) \to C_\bullet \big(\Delta f^{-1}_{\leq y}, \Delta f^{-1}_{< y}; \cosheaf{M} \big)$ is chain homotopic to the identity.
\end{lem}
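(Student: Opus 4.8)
The plan is to explicitly construct a chain homotopy $\Delta \hat g \circ \Delta \hat f \simeq \id$ on $C_\bullet \big( \Delta f^{-1}_{\leq y}, \Delta f^{-1}_{< y}; \cosheaf{M} \big)$ by mimicking, at the level of cosheaf chains, the standard prism-operator homotopy already established in Proposition~\ref{prop:chain_eq} (and, presumably, made explicit in the referenced Lemmas~\ref{lem:left_chain_homotopy} and~\ref{lem:right_chain_homotopy}). The key point is that, for a simplex $\sigma = (a_0 < \cdots < a_n)$ in $\Delta f^{-1}_{\leq y}$, the composite $\Delta g \circ \Delta f$ sends the underlying chain $a_0 < \cdots < a_n$ to the (possibly degenerate) chain $g f(a_0) \leq \cdots \leq g f(a_n)$, and by Lemma~\ref{lem:galois}~(i) we have $a_i \leq g f(a_i)$ for each $i$. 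So $\sigma$ and its image sit inside the single larger chain obtained by interleaving the $a_i$ with the $g f(a_i)$, and the classical simplicial prism decomposition of $\sigma \times [0,1]$ gives a formula for the homotopy operator $T$.

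\textbf{Key steps, in order.} First I would recall the explicit prism operator at the level of ordinary $\Z$-chains: for the map $\Delta(g\circ f)$, set
\[
T_n(\sigma) \;=\; \sum_{i=0}^{n} (-1)^i \, \big( a_0 < \cdots < a_i \leq gf(a_i) < gf(a_{i+1}) < \cdots < gf(a_n) \big),
\]
with the convention that any term whose listed vertices are not strictly increasing (i.e.\ a degenerate simplex) is dropped. Second, I would promote this to the cosheaf setting: each summand of $T_n(\sigma)$ is a simplex $\rho$ whose minimal vertex is $a_0 = \min(\sigma)$, so $\cosheaf{M}(\rho) = M(a_0) = \cosheaf{M}(\sigma)$, and the component of the cosheaf homotopy operator $\hat T_n|_\sigma \colon \cosheaf{M}(\sigma)\to\cosheaf{M}(\rho)$ is simply the \emph{identity} morphism $M(a_0 = \min(\rho))$ — mirroring how $\Delta\hat g_n$ and the ``first half'' of $\Delta\hat f_n$ are built from structure maps along $\min$. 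Third, I would check the homotopy identity $\partial \hat T + \hat T \partial = \Delta\hat g \circ \Delta\hat f - \id$ summand-wise; since every morphism appearing is a structure map of $M$ out of the common source object $M(\min\sigma)$, functoriality of $M$ reduces the verification to the purely combinatorial (signed, incidence-number) identity for the ordinary prism operator on $\Delta P$, which is exactly Proposition~\ref{prop:chain_eq}. Fourth, I would verify that $\hat T$ is well-defined on the \emph{relative} complex, i.e.\ that it carries $C_\bullet(\Delta f^{-1}_{< y}; \cdot)$ into itself: if $f(\max\sigma) < y$ then every summand $\rho$ of $T_n(\sigma)$ has $\max(\rho) \in \{\max\sigma\} \cup \{gf(a_i)\}$, and since $f$ is monotone with $f\circ g\circ f = f$ (Lemma~\ref{lem:galois}~(iii), via (i) and (ii)) one gets $f(\max\rho) \leq f(\max\sigma) < y$, so $\rho \in \Delta f^{-1}_{<y}$; hence $\hat T$ descends to the quotient.

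\textbf{Main obstacle.} The routine part is the prism bookkeeping, which is standard and inherited from Proposition~\ref{prop:chain_eq}. The delicate point — the step I expect to require the most care — is the \emph{compatibility with the relative structure and with the cosheaf $\cosheaf{M}$ simultaneously}: one must confirm that the degeneracy-dropping convention, the assignment of cosheaf values via $\min$, and the containment in $\Delta f^{-1}_{<y}$ via $\max$ all interact consistently, so that $\hat T$ is genuinely a morphism of the relative cosheaf chain complexes and not merely of the absolute ones. Once that is in place, the homotopy identity follows formally from functoriality of $M$ together with the classical identity, and the symmetric statement (Lemma~\ref{lem:right_cosheaf_homotopy}, for $\Delta\hat f\circ\Delta\hat g$ on the $\cosheaf{N}$-side) will be proved by the same argument with the roles of $f$ and $g$, and of $M$ and $N = M\circ g$, exchanged — there using Lemma~\ref{lem:galois}~(ii) and the fact that $f\circ g$ fixes the image of $f$.
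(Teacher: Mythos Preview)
Your proposal is correct and follows essentially the same route as the paper: promote the prism operator $\phi_n$ of Lemma~\ref{lem:left_chain_homotopy} to a cosheaf chain homotopy $\Phi_n$ by observing that every prism simplex $\tau_i$ has $\min\tau_i = \min\sigma$, so the component $\Phi_n|_\sigma$ is a direct sum of signed identity morphisms on $M(\min\sigma)$; the homotopy identity then reduces, by functoriality of $M$, to the combinatorial identity already established. Your treatment of the relative compatibility (using $f\circ g\circ f = f$ from Lemma~\ref{lem:galois}~(iii) to show each $\tau_i$ stays in $\Delta f^{-1}_{<y}$ when $\sigma$ does) is exactly what the paper does in the paragraph immediately following the two lemma proofs, so you have in fact anticipated a point the paper postpones slightly.
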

\begin{proof}
The chain homotopy $\phi_n$ described by Equation~\eqref{eq:left_chain_homotopy} induces
a chain homotopy 
$$\Phi_n : C_n \big( \Delta f^{-1}_{\leq y}, \Delta f^{-1}_{< y}; \cosheaf{M} \big) 
\to C_{n+1}\big( \Delta f^{-1}_{\leq y}, \Delta f^{-1}_{< y}; \cosheaf{M} \big)$$
defined simplex-wise as follows.
For notational convenience, substitute $\sigma$ and $\tau_i$ for the simplices on the left and right sides of Equation~\eqref{eq:left_chain_homotopy}:
	\begin{equation*}
	\phi_n (\sigma) := 
        \sum_{i=0}^n (-1)^i \tau_i .
	\end{equation*}
Note that $\min(\sigma) = \min \tau_i$,
for all $i$.
Thus $\Phi_n$ restricted to the summand $\cosheaf{M}(\sigma)$ is the direct sum of signed identity morphisms: 
$$\Phi_n |_\sigma := \bigoplus_{i=0}^n (-1)^i M(\min(\sigma) = \min \tau_i).$$
By Lemma~\ref{lem:left_chain_homotopy},
$\Phi_n$ is indeed a chain homotopy.
\end{proof}

\begin{lem}\label{lem:right_cosheaf_homotopy}
The composition $\Delta \hat f \circ \Delta \hat g :  C_\bullet \big( \Delta Q_{\leq y},  \Delta Q_{< y}; \cosheaf{N} \big) \to C_\bullet \big( \Delta Q_{\leq y},  \Delta Q_{< y}; \cosheaf{N} \big)$ is chain homotopic to the identity.
\end{lem}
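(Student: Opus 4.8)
The plan is to mirror the proof of Lemma~\ref{lem:left_cosheaf_homotopy}, but working over $Q$ and with the cosheaf $\cosheaf{N}$ instead of over $P$ with $\cosheaf{M}$. The key structural fact I would establish first is that the composition $\Delta f \circ \Delta g : \Delta Q \to \Delta Q$ behaves on simplices exactly the way the analogous composition did in Proposition~\ref{prop:chain_eq} and its underlying Lemma~\ref{lem:right_chain_homotopy}: for an $n$-simplex $\sigma = (x_0 < \cdots < x_n)$ in $\Delta Q_{\leq y}$, the composite sends it to the (possibly degenerate) chain $f(g(x_0)) \leq \cdots \leq f(g(x_n))$, and by Lemma~\ref{lem:galois}(ii) each $f \circ g(x_i) \leq x_i$, with $f \circ g(x_i) = x_i$ whenever $x_i \in \im f$. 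So the relevant chain homotopy from Lemma~\ref{lem:right_chain_homotopy}, call it $\psi_n$, writes $\psi_n(\sigma) = \sum_{i=0}^n (-1)^i \rho_i$ for certain simplices $\rho_i$ of $\Delta Q_{\leq y}$ interpolating between $\sigma$ and $\Delta f \Delta g(\sigma)$.

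Next I would check the two things needed to promote $\psi_n$ to a cosheaf-level homotopy $\Psi_n$. First, that $\psi_n$ is well-defined on the relative complex: if $\max(\sigma) < y$ then each interpolating simplex $\rho_i$ also has maximum $< y$ (its maximum is either $\max(\sigma)$ or $f\circ g(\max(\sigma)) \leq \max(\sigma) < y$), so $\psi_n$ descends to $C_\bullet(\Delta Q_{\leq y}, \Delta Q_{< y}; \cosheaf{N})$. Second — and this is the point where the cosheaf structure enters — I need $\min(\sigma) = \min(\rho_i)$ for all $i$, or more precisely that $\cosheaf{N}(\sigma) = N(\min\sigma)$ and $\cosheaf{N}(\rho_i) = N(\min\rho_i)$ are canonically identified. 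Looking at the shape of the interpolating simplices in Lemma~\ref{lem:right_chain_homotopy}, their minimal vertex is either $x_0 = \min\sigma$ or $f\circ g(x_0) = f\circ g(\min\sigma)$; in the second case $f\circ g(\min\sigma) \leq \min\sigma$, so the morphism $N\big(f\circ g(\min\sigma) \leq \min\sigma\big) = M\big(g f g(\min\sigma) \leq g(\min\sigma)\big)$ is the relevant structure map. (In fact, since $g f g = g$ by Lemma~\ref{lem:galois}(iii) applied with $a = g(\min\sigma)$, this is an identity, exactly paralleling the situation in Lemma~\ref{lem:left_cosheaf_homotopy} where $\min\sigma = \min\tau_i$.) Thus I would define $\Psi_n$ restricted to the summand $\cosheaf{N}(\sigma)$ as the direct sum over $i$ of $(-1)^i$ times the appropriate structure morphism $N(\min\rho_i \leq \min\sigma)$ — each of which is in fact an identity — into the summand $\cosheaf{N}(\rho_i)$.

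Finally I would verify the chain-homotopy identity $\partial \Psi + \Psi \partial = \id - \Delta\hat f \circ \Delta\hat g$ summand-wise. Because $\psi_n$ already satisfies the corresponding identity at the level of $\Z$-chains (Lemma~\ref{lem:right_chain_homotopy}), and because all the cosheaf morphisms attached to the simplices appearing on both sides are identities (by the $gfg = g$ observation), the verification reduces to the signed-combinatorial identity that $\psi$ already satisfies, combined with the commutativity of $\cosheaf{N}$ — exactly as in the proof of Lemma~\ref{lem:left_cosheaf_homotopy}. I expect the main obstacle to be purely bookkeeping: matching up which summands of the direct sums correspond under the boundary maps and confirming that the signs and the (trivial) cosheaf morphisms propagate correctly through $\partial_n \circ \Delta\hat f|$ and $\Delta\hat f| \circ \partial_n$; there is no new geometric or categorical input beyond Lemma~\ref{lem:right_chain_homotopy} and Lemma~\ref{lem:galois}. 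With Lemmas~\ref{lem:left_cosheaf_homotopy} and~\ref{lem:right_cosheaf_homotopy} in hand, $\Delta\hat f_\bullet$ and $\Delta\hat g_\bullet$ are mutually inverse chain homotopy equivalences, so their induced maps on homology give the canonical isomorphism of Equation~\eqref{eq:galois_homology}, completing the proof of Theorem~\ref{thm:rota_homology}.
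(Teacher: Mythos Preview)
Your proposal is correct and follows essentially the same approach as the paper: lift the $\Z$-level chain homotopy $\psi_n$ of Lemma~\ref{lem:right_chain_homotopy} to a cosheaf-level homotopy $\Psi_n$ by observing that $\cosheaf{N}(\sigma) = \cosheaf{N}(\tau_i)$ for every interpolating simplex $\tau_i$, so the required morphisms are identities and the verification reduces to the already-established combinatorial identity. One minor point: your justification of $gfg = g$ via Lemma~\ref{lem:galois}(iii) is not quite the right citation (that item gives $fgfg = fg$); the identity $gfg = g$ follows instead from combining (i) applied to $a = g(x)$ with monotonicity of $g$ applied to (ii), which is also what the paper (implicitly) uses.
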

\begin{proof}
The chain homotopy $\psi_n$ described by Equation~\eqref{eq:right_chain_homotopy} induces
a chain homotopy 
$$\Psi_n : C_n \big( \Delta Q_{\leq y},  \Delta Q_{< y}; \cosheaf{N} \big) \to C_{n+1}\big( \Delta Q_{\leq y},  \Delta Q_{< y}; \cosheaf{N} \big)$$
defined simplex-wise as follows.
For notational convenience, substitute $\sigma$ and $\tau_i$ for the simplices on the left and right sides of Equation~\eqref{eq:right_chain_homotopy}:
	\begin{equation*}
	\psi_n (\sigma) := \sum_{i=0}^n (-1)^i \tau_i .
	\end{equation*}
By Lemma~\ref{lem:galois}~(i), $g(\min(\sigma)) = g(\min \tau_i)$ implying $\cosheaf{N}(\sigma) = \cosheaf{N}(\tau_i)$.
Thus $\Psi_n$ restricted to the summand $\cosheaf{N}(\sigma)$ is the direct sum of signed identity morphisms
    $$\Psi_n |_\sigma := \bigoplus_{i=0}^n (-1)^i M \big(g (\min(\sigma)) = g(\min \tau_i)\big).$$
By Lemma~\ref{lem:right_chain_homotopy},
$\Psi_n$ is indeed a chain homotopy.
\end{proof}

We now complete the proof.
If $\sigma \in \Delta f^{-1}_{< y}$, then of course $f (\max(\sigma)) < y$.
By Lemma~\ref{lem:galois}~(iii),
$\Delta g \circ \Delta f (\sigma) \in \Delta f^{-1}_{< y}$.
Thus $\Delta \hat g \circ \Delta \hat f$ passes
to a chain endomorphism:
$$\frac{C_\bullet \big(\Delta f^{-1}_{\leq y}; \cosheaf{M} \big)}{C_\bullet \big( \Delta f^{-1}_{< y}; \cosheaf{M} \big)} \to
\frac{C_\bullet \big(\Delta f^{-1}_{\leq y}; \cosheaf{M} \big)}{C_\bullet \big( \Delta f^{-1}_{< y}; \cosheaf{M} \big)}.$$
Now consider the chain homotopy $\phi_n$ in Equation~\eqref{eq:left_chain_homotopy}.
By the same reasoning, if $a_0  < \cdots < a_n$
is in $\Delta f^{-1}_{< y}$,
then every simplex on the right side of the equation is also in $\Delta f^{-1}_{< y}$.
Thus the chain homotopy $\Phi_n$ passes
to a relative chain homotopy.
Now for the other side.
If $\tau \in \Delta Q_{< y}$, then $\max \tau < y$.
By Lemma~\ref{lem:galois}~(ii), 
$\Delta f \circ \Delta g (\tau) \in 
\Delta Q_{<  y}$.
Thus $\Delta \hat f \circ \Delta \hat g$ passes to a chain endomorphism on
$$\frac{C_\bullet (\Delta Q_{\leq y}; \cosheaf{N})}{C_\bullet (\Delta Q_{< y}; \cosheaf{N})} \to \frac{C_\bullet (\Delta Q_{\leq y}; \cosheaf{N})}{C_\bullet (\Delta Q_{< y}; \cosheaf{N})}.$$
Now consider the chain homotopy $\psi_n$ in Equation~\eqref{eq:right_chain_homotopy}.
By the same reasoning, if $x_0  < \cdots < x_n$
is in $\Delta Q_{< y}$,
then every simplex on the right side of the equation is also in $\Delta Q_{< y}$.
Thus the chain homotopy $\Psi_n$ passes
to a relative chain homotopy.

\section{Persistent Homology}
\label{sec:persistence}
We are now ready to study the concept of \emph{persistence} for a \(P\)-module using M\"obius homology. 
The goal of persistence is to capture information that persists across subsets of \(P\). 
The simplest subsets to consider are (half-open) intervals of \(P\).
Our primary construction is the \emph{birth-death module} associated with any given \(P\)-module. 
When \(P\) is totally ordered, the M\"obius homology of this module categorifies the traditional persistence diagram.
To match classical persistence, we assume that the poset \(P\) has a maximum element, denoted \(\infty\). 
This ensures we can discuss features that persist indefinitely—a central theme in persistent homology.

For $a \in P$ and an object $X \in \ob \cat$, let 
$X^{\uparrow a} : P \to \cat$ be the following module.
For every $b \in P$, 
    \begin{equation*}
    X^{\uparrow a}(b) :=
    \begin{cases}
    X & \text{ if $a \leq b$} \\
    0 & \text{ otherwise}
    \end{cases}
    \end{equation*}
and for every $b \leq c$ the morphism $X^{\uparrow a}(b \leq c)$ is the identity when $a \leq b$ and zero otherwise.
A $P$-module is \emph{free} if it is the direct sum of modules of the form $X^{\uparrow a}$.

\begin{defn}
    A \define{free presentation} of a $P$-module $M  : P \to \cat$ is a natural transformation $\phi : F \Rightarrow M$ from a free $P$-module~$F$ such that for all $a \in P$ the morphism $\phi(a) : F(a) \to M(a)$ is an epimorphism.
\end{defn}

The construction of a free presentation below appeared for $\vec$ in \cite{GulenMcCleary}. Here we present the same construction in an abelian category for completeness. 

\begin{prop}
Every $P$-module $M$ has a free presentation.
\end{prop}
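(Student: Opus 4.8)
The plan is to build the free presentation explicitly, summand by summand, exactly as one does over $\vec$: for each $a \in P$ choose an object that surjects onto $M(a)$ together with enough ``room'' to also surject onto all the incoming images, then assemble these into a single free module $F$ with a natural transformation $\phi : F \Rightarrow M$. Concretely, for each $a \in P$ set $F_a := M(a)^{\uparrow a}$, the free module generated at $a$ by the object $M(a)$, and take $F := \bigoplus_{a \in P} F_a$. The candidate natural transformation $\phi : F \Rightarrow M$ is defined componentwise: on the summand $F_a(b) = M(a)$ (which is nonzero only when $a \leq b$) the map $\phi(b)|_{F_a(b)} : M(a) \to M(b)$ is the structure morphism $M(a \leq b)$, and in particular $\phi(a)|_{F_a(a)}$ is the identity $M(a) \to M(a)$.

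First I would check that $\phi$ is a natural transformation, i.e.\ that for every $b \leq c$ the square relating $F(b \leq c)$, $\phi(b)$, $\phi(c)$, and $M(b \leq c)$ commutes. On the summand $F_a$ this reduces to the identity $M(a \leq c) = M(b \leq c) \circ M(a \leq b)$ when $a \leq b$, and to the trivial square $0 = 0$ otherwise; since direct sums of commuting squares commute, naturality follows. Second I would check that each $\phi(a) : F(a) \to M(a)$ is an epimorphism: the summand $F_a(a) = M(a)$ maps to $M(a)$ by the identity, so $\phi(a)$ is already split epi (it admits the section coming from that one summand), hence epi. Third, $F$ is free by construction, being a direct sum of modules of the form $X^{\uparrow a}$. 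That completes the verification.

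I do not expect a serious obstacle here; the statement is a formality once the right $F$ is written down, and the only mild subtlety is being careful about which summands of $F(b)$ are nonzero (only those $F_a$ with $a \leq b$) so that the componentwise definition of $\phi(b)$ makes sense and the naturality squares are indexed correctly. If one wanted a genuinely economical presentation — for instance one that is finitely generated when $P$ is finite and each $M(a)$ is ``finite'' in the relevant sense — one could instead generate, at each $a$, only a choice of object surjecting onto $\coker\big(\bigoplus_{b < a} M(b) \to M(a)\big)$ lifted back to $M(a)$; but since the proposition only asserts existence, the oversized choice $F = \bigoplus_a M(a)^{\uparrow a}$ suffices and keeps the argument transparent.
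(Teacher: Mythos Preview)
Your construction is correct and is essentially identical to the paper's: you take $F = \bigoplus_{a \in P} M(a)^{\uparrow a}$ with $\phi$ given summand-wise by the structure morphisms $M(a \leq b)$, and observe that the $a$-th summand already makes $\phi(a)$ (split) epi. The paper's proof is the same, only sketched more tersely; your explicit checks of naturality and of which summands are nonzero are welcome but not additional content.
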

\begin{proof}
Let $F$ be the free $P$-module $\bigoplus_{a \in P} M(a)^{\uparrow a}$.
Define the natural transformation $\phi : F \Rightarrow M$
summand-wise as follows.
For $a \in P$, let $\phi_a : M(a)^{\uparrow a} \Rightarrow M$ be the following natural transformation:
    \begin{equation*}
    \phi_a(b) :=
    \begin{cases}
    M(a \leq b) & \text{if $a \leq b$} \\
    0 & \text{otherwise.}
    \end{cases}
    \end{equation*}
Note that $\phi_a(a)$ is an isomorphism.
Then $\phi$ is the direct
sum $\oplus_{a\in P} \phi_a$.
\end{proof}

Every module $M$ has a free presentation, but it is not unique.
For now, we develop persistence for a fixed free presentation and show it is
well defined in the next subsection.

\begin{defn}
The \define{interval poset} of a poset \(P\), denoted \(\Int P\), is the set of all formal half-open intervals \([a, b)\) with \(a \leq b\). The ordering on \(\Int P\) is defined by \([a, b) \preceq [c, d)\) if and only if \(a \leq c\) and \(b \leq d\). See Figure~\ref{fig:poset_int}.
The \define{diagonal} is the subposet of $\Int P$ consisting of intervals of the form $[a,a)$.
\end{defn}

\begin{rmk}
Our definition of $\Int P$ is designed so that our theory of persistence aligns with classical persistence when $P$ is totally ordered. In classical persistence, one typically studies poset modules of the form $\mathbb{R} \to \vec$, where $\mathbb{R}$ is the real numbers with the usual total ordering. Although $\mathbb{R}$ is an infinite poset, the module itself changes isomorphism type at only finitely many real numbers. Therefore, such a module can be modeled as a $P$-module, where $P$ is a finite, totally ordered poset.

The classical persistence diagram is a multiset of open intervals of the form $[a, b)$, along with points on the diagonal. Our definition of $\Int P$ ensures that when a classical persistence module is modeled in our purely combinatorial framework, the resulting persistence diagram matches the classical persistence diagram.
See Examples~\ref{ex:classical_pd_bd} and~\ref{ex:classical_pd_ker}.
\end{rmk}

    \begin{figure}
    \centering
    \begin{subfigure}[c]{0.3\textwidth}
    \centering
    \begin{equation*}
    \begin{tikzcd}
    b \ar[r] & \infty \\
    a \ar[u] \ar[r] & c \ar[u]
    \end{tikzcd}
    \end{equation*}
    \caption{$P$}
    \end{subfigure} \hfill
    \begin{subfigure}[c]{0.6\textwidth}
    \centering
    \begin{equation*}
    \begin{tikzcd}
    & {[\infty,\infty)} & \\
    {[b,\infty)} \ar[ur] & & {[c,\infty)} \ar[ul] \\
    {[b,b)} \ar[u]  & {[a,\infty)} \ar[ru] \ar[lu] & {[c,c)} \ar[u] \\
    {[a,b)} \ar[u] \ar[ur] && {[a,c)} \ar[u] \ar[ul] \\
    & {[a,a)} \ar[lu] \ar[ru] &
    \end{tikzcd}
    \end{equation*}
    \caption{$\Int P$}
    \end{subfigure}
    \caption{Poset (a) and its interval poset (b).}
    \label{fig:poset_int}
    \end{figure}

Fix a free presentation $\phi : F \Rightarrow M$. We are interested in two objects associated with every interval $[a, b)$, depending on whether $b = \infty$.
First, assume $b \neq \infty$ and consider the following pullback diagram:
\[
\begin{tikzcd}
F(a) \cap \ker \phi_b \ar[rr, dashrightarrow] \ar[d, dashrightarrow]
&& \ker \phi(b) \ar[d, hookrightarrow] \\
F(a) \ar[rr, hookrightarrow] 
&& F(b).
\end{tikzcd}
\]
We interpret this pullback object as representing generators that arise by the start of the interval, namely~$a$, and become relations by the end of the interval, namely~$b$.
Second, assume $b = \infty$ and consider the object $F(a)$. We interpret $F(a)$ as representing generators that arise by the start of the interval, namely~$a$, and are forced to become relations by $\infty$.

For every $[a,b) \preceq [c,d)$, the following commutative diagram shows the existence of canonical monomorphisms between the two objects of interest described above 
depending on whether $b$ or $d$ is $\infty$:
\begin{equation*}
\begin{tikzcd}
F(a) \ar[d, hookrightarrow]
\ar[rr, hookrightarrow] && F(b) 
\ar[d, hookrightarrow]
&& \ker \phi(b) 
\ar[ll, hookrightarrow]
\ar[d, hookrightarrow] \\
F(c) \ar[rr, hookrightarrow] && F(d) && \ker \phi(d). \ar[ll, hookrightarrow]
\end{tikzcd}
\end{equation*}

\begin{defn}
The \define{birth-death module} of a free presentation $\phi : F \Rightarrow M$ is the module 
$\BD \phi : \Int P \to \cat$
that assigns to every interval $[a,b)$ (for $b \neq \infty$) the object 
$F(a) \cap \ker \phi_b$, and to every interval $[a,\infty)$, the object $F(a)$. 
For every $[a,b) \preceq [c,d)$, the morphism 
$\BD \phi \big( [a,b) \preceq [c,d) \big)$ is one of the two canonical monomorphisms mentioned above.
\end{defn}

The birth-death module satisfies an interesting pullback property that will be useful in the proof
of Proposition~\ref{prop:classical_persistence}.

\begin{prop} \label{prop:pullback}
Let $\BD \phi$ be the birth-death module of a free presentation $\phi : F \Rightarrow M$.
For every \(a \leq b \leq c \leq d\) in \(P\), the following subdiagrams of \(\BD \phi\) are pullback diagrams:

\begin{equation*}
\begin{tikzcd}
\BD \phi  [a,d)   \ar[r, hookrightarrow] & \BD \phi [b,d)  \\
\BD \phi  [a,c)  \ar[u, hookrightarrow, dashed] \ar[r, hookrightarrow, dashed] & 
\BD \phi [b,c)  \ar[u, hookrightarrow].
\end{tikzcd}
\end{equation*}
\end{prop}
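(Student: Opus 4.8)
The plan is to split into cases according to whether $d = \infty$, since the objects $\BD\phi[a,d)$ etc.\ have different descriptions in the two cases, and then in each case recognize the square as a pullback by unwinding definitions. First I would handle the generic case $d \neq \infty$, where all four corners are of the form $F(\cdot)\cap\ker\phi_{(\cdot)}$ and all four maps are the canonical monomorphisms. Here $\BD\phi[a,c) = F(a)\cap\ker\phi_c$, $\BD\phi[b,c) = F(b)\cap\ker\phi_c$, $\BD\phi[a,d) = F(a)\cap\ker\phi_d$, $\BD\phi[b,d) = F(b)\cap\ker\phi_d$, all regarded as subobjects of $F(d)$ via the appropriate inclusions, with the horizontal maps coming from $F(a)\hookrightarrow F(b)$ and the vertical maps from $\ker\phi_c\hookrightarrow\ker\phi_d$ (both of which are monos since $F$ is free and $\phi$ is a natural transformation). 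The key point is that inside an abelian category, for two subobjects $S,T$ of an ambient object, the square with corners $S\cap T,\ S,\ T,\ S+T$ — or more relevantly, $S\cap T,\ S,\ T$, and the ambient object — is a pullback whenever the lower-right corner receives both $S$ and $T$ monically; concretely $F(a)\cap\ker\phi_d$, being the intersection of the subobjects $F(a)$ and $\ker\phi_d$ of $F(d)$, is simultaneously the intersection of $F(a)$ and $F(b)\cap\ker\phi_d$ inside $F(b)$, i.e.\ the pullback of $F(b)\cap\ker\phi_d \hookrightarrow F(b) \hookleftarrow F(a)$. That pullback equals $\BD\phi[a,d)$, and tracing the monomorphisms shows the square in the statement is exactly that pullback square.

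Second I would handle the case $d = \infty$. Then $\BD\phi[a,\infty) = F(a)$ and $\BD\phi[b,\infty) = F(b)$, with the map between them the inclusion $F(a)\hookrightarrow F(b)$, while $\BD\phi[a,c) = F(a)\cap\ker\phi_c$ and $\BD\phi[b,c) = F(b)\cap\ker\phi_c$ (here $c\neq\infty$ since $c\leq d=\infty$ forces nothing, but if $c=\infty$ too then all four corners are $F(\cdot)$ and the square is trivially a pullback of split monos, so assume $c\neq\infty$). The square then reads: top row $F(a)\hookrightarrow F(b)$, bottom row $F(a)\cap\ker\phi_c \hookrightarrow F(b)\cap\ker\phi_c$, left map the inclusion $F(a)\cap\ker\phi_c \hookrightarrow F(a)$, right map $F(b)\cap\ker\phi_c\hookrightarrow F(b)$. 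This is a pullback because $F(a)\cap\ker\phi_c$ is precisely the intersection of the subobject $F(a)$ with the subobject $F(b)\cap\ker\phi_c$ inside $F(b)$ — the same subobject-intersection principle, just with $\ker\phi_d$ replaced by the whole of $F(b)$ (equivalently $\ker\phi_d = F(d)$ when one formally sets $\phi_\infty = 0$). So both cases reduce to the same lemma about intersections of subobjects being computed as iterated pullbacks.

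The main obstacle I anticipate is purely bookkeeping: making precise the claim that "$F(a)\cap\ker\phi_c$ is the intersection of $F(a)$ and $\ker\phi_c$ viewed inside $F(b)$ as well as inside $F(d)$," i.e.\ that these pullback objects genuinely agree and that the canonical monomorphisms of $\BD\phi$ are the ones realizing this agreement. This requires invoking the pasting law for pullbacks (a square is a pullback iff, when pasted to an adjacent pullback, the outer rectangle is a pullback) together with the fact that in the defining pullback square for $F(a)\cap\ker\phi_b$ all four arrows are monomorphisms, so every sub-square that arises is automatically a pullback of monos. I would state this as a short preliminary remark — "in an abelian category, a commutative square of monomorphisms into a common object is a pullback iff the top-left corner is the intersection of the two subobjects given by the other sides" — and then the verification in each case is a one-line identification of subobjects. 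No serious calculation is needed beyond drawing the three-dimensional diagram of $F$'s and $\ker\phi$'s and chasing monos.
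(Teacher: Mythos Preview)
Your approach is correct and is essentially what the paper's proof amounts to: the paper simply writes ``a straightforward chase on the following diagram'' and displays the ladder $F(a)\hookrightarrow F(b)\hookrightarrow F(c)\hookrightarrow F(d)$ over $M(a)\to M(b)\to M(c)\to M(d)$, leaving all details to the reader. Your subobject-intersection argument, together with the pasting law for pullbacks, is exactly how one would carry out that chase in an abelian category.

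One slip to fix: in your first paragraph you identify ``the square in the statement'' with the pullback square having corners $F(a)\cap\ker\phi_d,\ F(a),\ F(b)\cap\ker\phi_d,\ F(b)$. That is not the square in the statement; the pullback object to be identified is $\BD\phi[a,c)=F(a)\cap\ker\phi_c$, not $\BD\phi[a,d)$. The correct computation (which your second and third paragraphs effectively set up) is that inside $F(d)$ one has $(F(a)\cap\ker\phi_d)\cap(F(b)\cap\ker\phi_c)=F(a)\cap\ker\phi_c$, using $F(a)\subseteq F(b)$ and $\ker\phi_c\subseteq\ker\phi_d$ as subobjects of $F(d)$; the pasting law then lets you move the ambient object from $F(d)$ to $F(b)\cap\ker\phi_d$. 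Your $d=\infty$ case is stated correctly.
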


\begin{proof}
The statement follows from a straightforward chase on the following diagram:
\begin{equation*}
\begin{tikzcd}
F \ar[d, Rightarrow, "\phi"] & F(a) \ar[d, twoheadrightarrow] \ar[r, hookrightarrow] & F(b) \ar[d, twoheadrightarrow] \ar[r, hookrightarrow] & F(c) \ar[d, twoheadrightarrow] \ar[r, hookrightarrow] & F(d) \ar[d, twoheadrightarrow] \\
M & M(a) \ar[r] & M(b) \ar[r] & M(c) \ar[r] & M(d).
\end{tikzcd}
\end{equation*}

\end{proof}

Let $\bd \phi : \Int P \to K(\cat)$ denote the dimension function of the birth-death module $\BD \phi$. When $P$ is totally ordered, the M\"obius inversion $\partial (\bd \phi)$ coincides with the classical persistence diagram for all intervals $[a, b)$ with $a < b$. See~\cite{GulenMcCleary} for a proof and Example~\ref{ex:classical_pd_bd}. This connection motivates the following generalization of the classical persistence diagram, as introduced by G\"ulen and McCleary~\cite{GulenMcCleary}.

\begin{defn}
Let $\phi : F \Rightarrow M$ a free presentation of a $P$-module $M$.
The \define{persistence diagram} of $\phi$ is the M\"obius inversion $\partial (\bd \phi)$.
\end{defn}

There is a drawback to this generalization of the persistence diagram.
Bottleneck stability, as introduced by Cohen-Steiner, Edelsbrunner, and Harer~\cite{stability-persistence}, is an important property of the classical persistence diagram, but it
is unlikely such a theorem exists for this generalization.
The big hurdle here is the fact that $\partial (\BD \phi)$ may take negative values when $P$ is not totally ordered.
We hope the order cosheaf of $\BD \phi$ will offer a path forward since, 
by Theorem~\ref{thm:euler}, $\partial (\bd \phi)$ is the Euler charactersictic of $\M_\ast (\BD \phi)$.
In other words, there is now a lift of $\partial (\bd \phi)$ to a topological object and we hope this will inspire a topological bottleneck stability result.

\begin{defn}
Let $\phi : F \Rightarrow M$ be a free presentation of a $P$-module $M$.
The \define{persistent M\"obius homology} of
$\phi$ is the M\"obius homology module $\M_\ast (\BD \phi) : \Int P \to \cat$.
\end{defn}

The following example compares two similar modules over a totally ordered poset.
While their dimensions functions are identical, their persistence diagrams are different.

\begin{ex} \label{ex:classical_pd_bd}
We consider two examples over the poset $P = \{1 < 2 < 3 < 4 < \infty\}$. We construct two $P$-modules $M, N : P \to \vec$, compute their persistence diagrams, and examine their persistent M\"obius homology.
For any $a \leq b$, there is a natural inclusion $\field^{\uparrow b} \hookrightarrow \field^{\uparrow a}$. Denote by $I_{[a, b)}$ the $P$-module $\frac{\field^{\uparrow a}}{\field^{\uparrow b}}$. 

Let $M := I_{[0,4)} \oplus I_{[1,3)} \oplus I_{[2,\infty)}$. A free presentation of $M$ is given by $\phi : F \Rightarrow M$, where:
\[
F := \field^{\uparrow 0} \oplus \field^{\uparrow 1} \oplus \field^{\uparrow 2},
\]
and $\phi$ is the quotient natural transformation.

Let $N := I_{[0,3)} \oplus I_{[1,4)} \oplus I_{[2,\infty)}$. A free presentation of $N$ is given by $\psi : G \Rightarrow N$, where:
\[
G := \field^{\uparrow 0} \oplus \field^{\uparrow 1} \oplus \field^{\uparrow 2},
\]
and $\psi$ is the quotient natural transformation.

Although the dimension functions $m, n : P \to \Z$ of the two modules are identical, their persistence differs significantly. Figure~\ref{fig:classical_bd} illustrates the dimension functions $\bd \phi$ and $\bd \psi$, along with the zero-th M\"obius homology $\M_0 \BD \phi$ and $\M_0 \BD \psi$. In both cases, the M\"obius homology vanishes in all dimensions greater than zero. 
The persistence diagrams $\partial (\bd \phi)$ and $\partial (\bd \psi)$ are interpreted as the Euler characteristics of $\M_\ast \BD \phi$ and $\M_\ast \BD \psi$, respectively.
\end{ex}

\begin{figure}
\begin{center}
\begin{tikzpicture}
        \node[anchor=south west, inner sep=0] (image) at (0,0) {\includegraphics[width=0.9\textwidth,page=9]{mobius_example.pdf}}; %
        \begin{scope}[x={(image.south east)}, y={(image.north west)}] 
            \node[anchor=center] (M1)  at (-0.05,0.78) {$M$};
            \node[anchor=center] (M2)  at (-0.05,0.255) {$N$};
            \node[anchor=center] (a)  at (0.07,-0.04) {(a)};
            \node[anchor=center] (b)  at (0.355,-0.04) {(b)};
            \node[anchor=center] (c)  at (0.81,-0.04) {(c)};
             \node[anchor=center] (n1)  at (0.45,0.65) {$\bd \phi$};
            \node[anchor=center] (n2)  at (0.45,0.15) {$\bd \psi$};
              \node[anchor=center] (n1)  at (0.9,0.65) {\textcolor{blue}{  \raisebox{0.4ex}{\scriptsize{$\bullet$}} $\M_0 \BD \phi$ }};
            \node[anchor=center] (n2)  at (0.9,0.15) {\textcolor{blue}{ \raisebox{0.4ex}{\scriptsize{$\bullet$}} $\M_0 \BD \psi$ }};
\end{scope}
\end{tikzpicture}
\caption{Two $P$-modules $M$ and $N$ (a), the dimension functions of their birth-death modules (b), and the rank of their M\"obius homology (c).}
\label{fig:classical_bd}
\end{center}
\end{figure}

The following example compares two similar modules over the same poset. While their persistence diagrams are identical, their persistent M\"obius homologies differ. This demonstrates that persistent M\"obius homology is a finer invariant than the persistence diagram.

\begin{ex}
Let $P$ be the poset in Figure~\ref{fig:poset_int} and consider 
Figure~\ref{fig:birth_death} for an example
of two $P$-modules, their free presentations, and their birth-death modules.
Note that the dimension functions of both birth-death modules are the same and therefore their persistence diagrams are the  same.
However, their persistent M\"obius homology does see a difference.
The table below lists the M\"obius homology of the two birth-death modules at the interval~$[a,\infty)$.

    \begin{center}
    \begin{tabular}{L || C | C } 
    [a,\infty) &  \phi & \psi \\  \hline \hline
    \M_0 & \field & 0 \\\hline
    \M_1 & \field & 0 \\ \hline
    \M_2 & 0 & 0
    \end{tabular}
    \end{center}
\end{ex}

A third invariant worth considering is the total homology $H_\ast \big( \Delta(\Int P)); \underline{\BD\phi} \big)$ of the birth-death cosheaf.
By Corollary~\ref{cor:total_homology},
    \begin{equation*} \label{eq:total_homology}
    \sum_{[a,b) \in \Int P} \partial (\BD \phi)[a,b) = \sum_{d  \geq 0} (-1)^d \Big[ H_d \big(\Delta (\Int P); \underline{\BD \phi} \big) \Big].
    \end{equation*}

\begin{figure}
    \centering
    \begin{subfigure}[b]{0.45\textwidth}
    \centering
    \begin{equation*}
    \begin{tikzcd}[ampersand replacement=\&]
    \field \ar[r, "0"] \& 0 \\
    \field^2 \ar[u, "\footnotesize{\begin{pmatrix} 1 & 0  \end{pmatrix}}"] \ar[r, "\footnotesize{\begin{pmatrix} 1 & 0  \end{pmatrix}}"']
    \& \field \ar[u]
    \end{tikzcd}
    \end{equation*}
    \caption{$M : P \to \vec$}
    \label{fig:vidit_ex_1}
    \end{subfigure}  \hfill
    \begin{subfigure}[b]{0.45\textwidth}
    \centering
    \begin{equation*}
    \begin{tikzcd}[ampersand replacement=\&] \field \ar[r, "0"] \& 0 \\
    \field^2 \ar[u, "\footnotesize{\begin{pmatrix} 0 & 1  \end{pmatrix}}"] \ar[r, "\footnotesize{\begin{pmatrix} 1 & 0  \end{pmatrix}}"'] \& \field \ar[u]
    \end{tikzcd}
    \end{equation*}
    \caption{$N : P \to \vec$}
    \label{fig:vidit_ex_2}
    \end{subfigure}
    
    \begin{subfigure}[b]{0.45\textwidth}
    \centering
    \begin{equation*}
    \begin{tikzcd}[ampersand replacement=\&] 
    \field \oplus \field \ar[rrr, "\One"] \ar[dr, twoheadrightarrow, "\footnotesize{\begin{pmatrix} 1 & 0 \end{pmatrix}}"]
    \& \& \& \field \oplus \field \ar[ld, twoheadrightarrow, "\One"] \\
    \& \field \ar[r, "0"] \& 0 \& \\
    \& \field^2 \ar[u, "\footnotesize{\begin{pmatrix} 1 & 0  \end{pmatrix}}"] \ar[r, "\footnotesize{\begin{pmatrix} 1 & 0  \end{pmatrix}}"'] \& \field \ar[u] \& \\
    \field \oplus \field \ar[uuu, "\One"] \ar[rrr, "\One"] \ar[ru, "\One"] \& \& \& \field \oplus \field \ar[ul, twoheadrightarrow, "\footnotesize{\begin{pmatrix} 1 & 0 \end{pmatrix}}"] \ar[uuu, "\One"]
    \end{tikzcd}
    \end{equation*}
    \caption{$\phi : F \Rightarrow M$}
    \label{fig:free_1}
    \end{subfigure}  \hfill
    \begin{subfigure}[b]{0.45\textwidth}
    \centering
    \begin{equation*}
    \begin{tikzcd}[ampersand replacement=\&] 
    \field \oplus \field \ar[rrr, "\One"] \ar[dr, twoheadrightarrow, "\footnotesize{\begin{pmatrix} 0 & 1 \end{pmatrix}}"]
    \& \& \& \field \oplus \field \ar[ld, twoheadrightarrow, "\One"] \\
    \& \field \ar[r, "0"] \& 0 \& \\
    \& \field^2 \ar[u, "\footnotesize{\begin{pmatrix} 0 & 1  \end{pmatrix}}"] \ar[r, "\footnotesize{\begin{pmatrix} 1 & 0  \end{pmatrix}}"'] \& \field \ar[u] \& \\
    \field \oplus \field \ar[uuu, "\One"] \ar[rrr, "\One"] \ar[ru, "\One"] \& \& \& \field \oplus \field \ar[ul, twoheadrightarrow, "\footnotesize{\begin{pmatrix} 1 & 0 \end{pmatrix}}"] \ar[uuu, "\One"]
    \end{tikzcd}
    \end{equation*}
    \caption{$\psi : G \Rightarrow N$}
    \label{fig:free_2}
    \end{subfigure}
     \begin{subfigure}[b]{0.45\textwidth}
    \centering
    \begin{equation*}
    \begin{tikzcd}[ampersand replacement=\&] \& \field^2 \& \\
    \field^2 \ar[ur,"\One"] \& \& \field^2 \ar[swap,ul,"\One"] \\
    \field \ar[u,"\footnotesize{\begin{pmatrix} 0 \\ 1   \end{pmatrix}}"]  \& \field^2 \ar[ru,"\One"] 
    \ar[swap,lu,"\One"] \& \field \ar[swap,u,"\footnotesize{\begin{pmatrix} 0 \\ 1  \end{pmatrix}}"] \\
    \field \ar[u,"\One"] \ar[ur, "\footnotesize{\begin{pmatrix} 0 \\ 1  \end{pmatrix}}"'] \&\& \field \ar[swap,u,"\One"] 
    \ar[ul, "\footnotesize{\begin{pmatrix} 0 \\ 1  \end{pmatrix}}"] \\
    \& 0 \ar[lu,"\One"] \ar[swap,ru,"\One"] \&
    \end{tikzcd}
    \end{equation*}
    \caption{$\BD \phi : \Int P \to \vec$}
    \label{fig:BD_1}
    \end{subfigure}
    \hfill
     \begin{subfigure}[b]{0.45\textwidth}
    \centering
    \begin{equation*}
    \begin{tikzcd}[ampersand replacement=\&] \& \field^2 \& \\
    \field^2 \ar[ur,"\One"] \& \& \field^2 \ar[swap,ul,"\One"] \\
    \field \ar[u,"\footnotesize{\begin{pmatrix} 1 \\ 0   \end{pmatrix}}"]  \& \field^2 \ar[ru,"\One"] 
    \ar[swap,lu,"\One"] \& \field \ar[swap,u,"\footnotesize{\begin{pmatrix} 0 \\ 1  \end{pmatrix}}"] \\
    \field \ar[u,"\One"] \ar[ur, "\footnotesize{\begin{pmatrix} 1 \\ 0  \end{pmatrix}}"'] \&\& \field \ar[swap,u,"\One"] 
    \ar[ul, "\footnotesize{\begin{pmatrix} 0 \\ 1  \end{pmatrix}}"] \\
    \& 0 \ar[lu,"\One"] \ar[swap,ru,"\One"] \&
    \end{tikzcd}
    \end{equation*}
    \caption{$\BD \psi : \Int P \to \vec$}
    \label{fig:BD_2}
    \end{subfigure}
    \caption{Two $P$-modules $M$ and $N$, their free presentations $\phi$ and $\psi$, and their birth-death modules
    $\BD \phi$ and $\BD \psi$.}
    \label{fig:birth_death}
\end{figure}

\subsection{Independence of Free Presentation}
Our definition of persistent M\"obius homology depends on the choice of a free presentation.
In this section, we show that the persistent M\"obius homology at intervals $[a,b)$ off the diagonal, i.e., $a< b$, is independent of this choice.
Traditionally, persistence is not concerned about intervals on the diagonal as they have zero persistence.

To state the independence of the free presentation, we require a canonical intermediate \(P\)-module for a given \(P\)-module \(M\).

\begin{defn}
The \define{kernel} $P$-module of $M$, denoted $K_M : \Int P \to \cat$, is defined as follows. For every interval $[a, b)$, where $b \neq \infty$, $K_M$ assigns the kernel of the morphism $M(a \leq b)$. For intervals of the form $[a, \infty)$, $K_M$ assigns the object $M(a)$. For every $[a, b) \preceq [c, d)$, $K_M$ assigns the appropriate composition of morphisms below, depending on whether $b$ or $d$ is $\infty$:
\[
\begin{tikzcd}
    \ker M(a \leq b) \ar[rr, dashrightarrow] \ar[d, hookrightarrow]
    && \ker M(c \leq d) \ar[d, hookrightarrow] \\
    M(a) \ar[rr, "M(a \leq c)"] && M(c).
\end{tikzcd}
\]
\end{defn}

The following proposition establishes that the persistent M\"obius homology is independent of the choice of a free presentation for all intervals off the diagonal.

\begin{prop} \label{prop:free}
Let \(\phi : F \Rightarrow M\) be a free presentation of a \(P\)-module \(M\). 
For every \(a < b\), we have 
$\M_\ast \BD \phi [a,b) \cong \M_\ast K_M [a,b) .$
\end{prop}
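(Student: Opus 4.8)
The plan is to realize both $\BD\phi$ and $K_M$ as consecutive terms of a short exact sequence of $\Int P$-modules whose remaining term has M\"obius homology vanishing away from the diagonal, and then extract the isomorphism from the associated long exact sequence. \emph{Step 1: a short exact sequence.} Let $\ker\phi : P \to \cat$ denote the $P$-module $a \mapsto \ker\phi(a)$, with structure morphisms inherited from $F$, and let $\pi_1 : \Int P \to P$ be the monotone map $[a,b) \mapsto a$. Set $L := (\ker\phi)\circ\pi_1 : \Int P \to \cat$, so $L[a,b) = \ker\phi(a)$ for every interval. I claim the epimorphisms $\phi(a) : F(a) \twoheadrightarrow M(a)$ assemble into a natural transformation $\BD\phi \Rightarrow K_M$ with kernel $L$. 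For $b \neq \infty$ the object $\BD\phi[a,b) = F(a) \cap \ker\phi_b$ is the pullback of $F(a) \hookrightarrow F(b) \hookleftarrow \ker\phi(b)$, which by naturality of $\phi$ is the kernel of the composite $F(a) \to F(b) \to M(b)$, i.e.\ of $M(a\leq b)\circ\phi(a)$; hence $\phi(a)$ restricts to a morphism $\BD\phi[a,b) \to \ker M(a\leq b) = K_M[a,b)$, and a short diagram chase shows this restriction is epic with kernel exactly $\ker\phi(a)$. For $b = \infty$ the restriction is simply $\phi(a) : F(a) \twoheadrightarrow M(a) = K_M[a,\infty)$, again with kernel $\ker\phi(a)$. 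Since all structure morphisms of $\BD\phi$, $L$, and $K_M$ are built from the monomorphisms $F(a)\hookrightarrow F(c)$ and the maps $M(a\leq c)$, comparing them against the naturality square of $\phi$ shows these maps are natural in $[a,b)$ and identifies the kernel module with $L$. Thus
$$0 \longrightarrow L \longrightarrow \BD\phi \longrightarrow K_M \longrightarrow 0$$
is a short exact sequence of $\Int P$-modules.

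\emph{Step 2: the long exact sequence.} Applying the order cosheaf construction over $\Delta(\Int P)$ and passing to the relative chain complexes $C_\bullet\big(\Delta(\Int P)_{\preceq[a,b)}, \Delta(\Int P)_{\prec[a,b)}; \cosheaf{(\cdot)}\big)$ preserves exactness, since the order cosheaf evaluates a simplex at the minimum of its chain and each chain object is the direct sum of these values over a set of simplices that does not depend on the module. Hence the sequence above induces a short exact sequence of relative chain complexes, and therefore a long exact sequence
$$\cdots \longrightarrow \M_d L[a,b) \longrightarrow \M_d \BD\phi[a,b) \longrightarrow \M_d K_M[a,b) \longrightarrow \M_{d-1} L[a,b) \longrightarrow \cdots .$$
It remains to show $\M_\ast L[a,b) = 0$ whenever $a < b$.

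\emph{Step 3: vanishing of $\M_\ast L$ off the diagonal.} Here I would invoke Theorem~\ref{thm:rota_homology} for the Galois connection $\delta : P \leftrightarrows \Int P : \pi_1$, where $\delta(c) := [c,c)$. Both $\delta$ and $\pi_1$ are monotone, and since $a \leq b$ makes the condition $c \leq a$ equivalent to the conjunction $c \leq a$ and $c \leq b$, we get $\delta(c) \preceq [a,b) \iff c \leq \pi_1([a,b))$; thus $\delta$ is the left adjoint of $\pi_1$. As $L = (\ker\phi)\circ\pi_1$, Theorem~\ref{thm:rota_homology} provides a canonical isomorphism
$$\M_\ast L[a,b) \;\cong\; H_\ast\big(\Delta\delta^{-1}_{\preceq[a,b)},\, \Delta\delta^{-1}_{\prec[a,b)};\, \cosheaf{\ker\phi}\big).$$
Now $\Delta\delta^{-1}_{\preceq[a,b)} = \{\sigma \in \Delta P : [\max(\sigma),\max(\sigma)) \preceq [a,b)\} = \{\sigma : \max(\sigma) \leq a\} = \Delta P_{\leq a}$, and when $a < b$ the diagonal interval $[\max(\sigma),\max(\sigma))$ can never equal $[a,b)$, so $\Delta\delta^{-1}_{\prec[a,b)}$ is this same set $\Delta P_{\leq a}$. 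The relative complex is then identically zero, so $\M_\ast L[a,b) = 0$. Feeding this into the long exact sequence of Step 2 collapses it to the desired isomorphism $\M_\ast \BD\phi[a,b) \cong \M_\ast K_M[a,b)$, induced by the quotient $\BD\phi \Rightarrow K_M$.

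\emph{Main obstacle.} The delicate part is Step 1: verifying that the pointwise epimorphisms $\phi(a)$, together with their kernels $\ker\phi(a)$, really glue into a short exact sequence of \emph{functors} on $\Int P$ — i.e.\ that the structure morphisms of $\BD\phi$ restrict to those of $L$ and descend to those of $K_M$ — and doing so uniformly across the two cases $b \neq \infty$ and $b = \infty$. Once that short exact sequence and the adjunction $\delta \dashv \pi_1$ are in hand, the remainder (the long exact sequence and the identification of the two subcomplexes with $\Delta P_{\leq a}$) is routine.
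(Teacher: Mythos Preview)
Your proof is correct and follows essentially the same route as the paper: your module $L = (\ker\phi)\circ\pi_1$ is exactly the paper's intermediate module $D_\phi$ (since $\BD\phi[a,a) = \ker\phi(a)$), your Galois connection $\delta \dashv \pi_1$ is precisely the one the paper uses to prove $\M_\ast D_\phi[a,b)=0$ off the diagonal, and the long exact sequence argument is identical. The only difference is that you spell out the verification of the short exact sequence $0\to L\to\BD\phi\to K_M\to 0$ in more detail than the paper, which simply asserts it.
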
 

Before presenting the proof, we revisit the persistence diagrams of the modules in Example~\ref{ex:classical_pd_bd}, this time using the kernel module.

\begin{ex}\label{ex:classical_pd_ker}
Consider the $P$-modules $M, N : P \to \vec$ illustrated in Figure~\ref{fig:classical_ker}. The dimension functions $k_m, k_n : \Int P \to \Z$ corresponding to their kernel modules $K_M$ and $K_N$ are shown. To the right, the dimensions of their persistent M\"obius homologies $\M_d K_M$ and $\M_d K_N$ are displayed, with non-zero values appearing only in two dimensions: $d = 0$ and $d = 1$.

This shows that the Euler characteristics $\partial k_m$ and $\partial k_n$ of $\M_\ast K_M$ and $\M_\ast K_N$, respectively, take negative values along the diagonal. Off the diagonal, however, $\partial k_m$ and $\partial k_n$ match $\partial (\bd \phi)$ and $\partial (\bd \psi)$ from Figure~\ref{fig:classical_bd}. As a result, Proposition~\ref{prop:free} still applies in this case.
\end{ex}

\begin{figure}
\begin{center}
\begin{tikzpicture}
        \node[anchor=south west, inner sep=0] (image) at (0,0) {\includegraphics[width=0.9\textwidth,page=7]{mobius_example.pdf}}; %
        \begin{scope}[x={(image.south east)}, y={(image.north west)}] 
            \node[anchor=center] (M1)  at (-0.05,0.78) {$M$};
            \node[anchor=center] (M2)  at (-0.05,0.255) {$N$};
            \node[anchor=center] (a)  at (0.07,-0.04) {(a)};
            \node[anchor=center] (b)  at (0.355,-0.04) {(b)};
            \node[anchor=center] (c)  at (0.81,-0.04) {(c)};
             \node[anchor=center] (n1)  at (0.45,0.65) {$k_m$};
            \node[anchor=center] (n2)  at (0.45,0.15) {$k_n$};
              \node[anchor=center] (n1)  at (0.9,0.71) {\textcolor{blue}{ \raisebox{0.4ex}{\scriptsize{$\bullet$}} $\M_0 K_M$ }};
              \node[anchor=center] (n1a)  at (0.9,0.65) {\textcolor{red}{ 
\raisebox{0.4ex}{\scriptsize{$\times$}} $\M_1 K_M$ }};
                \node[anchor=center] (n2)  at (0.9,0.19) {\textcolor{blue}{\raisebox{0.4ex}{\scriptsize{$\bullet$}} $ \M_0 K_N$ }};
              \node[anchor=center] (n2a)  at (0.9,0.13) {\textcolor{red}{ \raisebox{0.4ex}{\scriptsize{$\times$}} $\M_1 K_N$ }};
\end{scope}
\end{tikzpicture}
\caption{Two $P$-modules $M$ and $N$ (a), the dimension functions of their kernel modules (b), and the rank of their M\"obius homology (c).}
\label{fig:classical_ker}
\end{center}
\end{figure}

The proof of Proposition~\ref{prop:free} involves an intermediate module. Let \(D_\phi : \Int P \to \cat\) be the module that assigns to every interval \([a,b)\) the birth-death object $\BD \phi  [a, a)$  and to every inclusion \([a,b) \preceq [c,d)\) the canonical monomorphism $\BD \phi [a,a) \hookrightarrow \BD \phi [c,c)$.

\begin{lem} 
\label{lem:zero_homology}
For all $a < b$ in $P$, $\M_\ast D_\phi [a,b) = 0$.
\end{lem}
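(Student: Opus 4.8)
The plan is to recognize $D_\phi$ as a module pulled back along a Galois connection and then invoke the homological Rota's theorem (Theorem~\ref{thm:rota_homology}). First I would record that $D_\phi$ depends only on the left endpoint of an interval: letting $g : \Int P \to P$ be the left-endpoint projection $g[a,b) = a$ and letting $E : P \to \cat$ be the restriction of the birth-death module $\BD\phi$ along the diagonal $a \mapsto [a,a)$, one sees directly from the definition of $D_\phi$ that $D_\phi = E \circ g$. (Both $E$ and $g$ are functorial, since $\BD\phi$ is and the diagonal of $\Int P$ is order-isomorphic to $P$.)

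Next I would verify that $f : P \leftrightarrows \Int P : g$ is a Galois connection, where $f(a) := [a,a)$. Both maps are visibly monotone, and for $a \in P$ and $[c,d) \in \Int P$ we have $[a,a) \preceq [c,d)$ if and only if $a \leq c$ and $a \leq d$; since $c \leq d$ this is equivalent to $a \leq c = g[c,d)$. This is the one point at which the inequality $c \leq d$ intrinsic to an interval enters.

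Then I would apply Theorem~\ref{thm:rota_homology} to the module $E$ and this Galois connection. As $D_\phi = E \circ g$ is exactly the module denoted $N$ there, we obtain for each $y = [a,b) \in \Int P$ a canonical isomorphism
\[
\M_\ast D_\phi [a,b) \;\cong\; H_\ast\big( \Delta f^{-1}_{\leq [a,b)},\ \Delta f^{-1}_{< [a,b)};\ \cosheaf{E}\big),
\]
where the two subcomplexes of $\Delta P$ are cut out by $f(\max\sigma) \preceq [a,b)$ and $f(\max\sigma) \prec [a,b)$. Since $f(\max\sigma) = [\max\sigma,\max\sigma)$, the condition $[\max\sigma,\max\sigma) \preceq [a,b)$ amounts to $\max\sigma \leq a$ (the inequality $\max\sigma \leq b$ being automatic from $a \leq b$), so $\Delta f^{-1}_{\leq [a,b)} = \Delta P_{\leq a}$; and $[\max\sigma,\max\sigma) \prec [a,b)$ additionally requires $[\max\sigma,\max\sigma) \neq [a,b)$, which holds automatically precisely because $a < b$, so $\Delta f^{-1}_{< [a,b)} = \Delta P_{\leq a}$ as well. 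With subcomplex equal to the whole complex, the relative chain complex is identically zero, hence $\M_\ast D_\phi[a,b) = 0$.

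I expect the real content to be confined to the first two steps---spotting the pullback factorization and the Galois connection; once these are in hand the homology computation is forced. The point to watch is the use of the hypothesis $a < b$ in the last step: it is exactly what collapses the two subcomplexes onto one another. On the diagonal they genuinely differ ($\Delta P_{\leq a}$ versus $\Delta P_{< a}$), which is why the lemma, like the notion of persistence itself, is stated only off the diagonal.
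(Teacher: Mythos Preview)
Your proof is correct and follows essentially the same route as the paper: the same Galois connection $f(a)=[a,a)$, $g[c,d)=c$, the same factorization $D_\phi = E\circ g$ (the paper calls your $E$ by $B$), and the same appeal to Theorem~\ref{thm:rota_homology}. The only cosmetic difference is in the final step: the paper observes that $f^{-1}[a,b)=\emptyset$ for $a<b$ and concludes directly, whereas you unwind this into the explicit equality $\Delta f^{-1}_{\leq [a,b)}=\Delta f^{-1}_{<[a,b)}=\Delta P_{\leq a}$; these are the same observation.
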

\begin{proof}
Let $f : P \to \Int P$ be the monotone function $f(a) := [a,a)$ and $g: \Int P \to P$ the monotone function
$g[b,c) := b$.
For every $a \in P$ and $[b,c) \in \Int P$ the following holds:
$$f(a) = [a,a) \leq [b,c) \text{ iff } a \leq g [b,c) = b.$$
This makes 
$f : P \leftrightarrows \Int P : g$ a Galois connection.
Let $B : P \to \cat$ be the module that assigns to every $a \in P$ the object $\BD \phi [a,a)$ and to every $a \leq b$ the canonical monomorphism between the two birth-death objects. 
Note $D_\phi = B \circ g$ and $f^{-1}[a,b)$
is empty unless $a =b$.
The desired result follows from Theorem~\ref{thm:rota_homology}.
\end{proof}

\begin{proof}[Proof of Proposition~\ref{prop:free}]
The $P$-modules $D_\phi$, $\BD \phi$, and $K_M$ fit into a short exact sequence:
\begin{equation*}
    \begin{tikzcd}
        0 \ar[r] & D_\phi \ar[r]  & \BD \phi 
        \ar[r] & K_M \ar[r] & 0.
    \end{tikzcd}
\end{equation*}
This short exact sequence of modules gives rise to a short  exact sequence
of M\"obius chain complexes:
    \begin{equation*}
    \begin{tikzcd}
        0 \ar[r] & \Ch_\bullet D _\phi [a,b) \ar[r]  & \Ch_\bullet (\BD \phi) [a,b) \ar[r] & \Ch_\bullet K_M [a,b) \ar[r] & 0.
    \end{tikzcd}
    \end{equation*}
This short exact sequence of M\"obius chain complexes gives rise to  a long exact sequence of M\"obius homology objects:
    \begin{equation*}
    \begin{tikzcd}
        \cdots \ar[r] & \M_{n+1} K_M [a,b)  \ar[r] & \M_n D_\phi [a,b) \ar[r] & \M_n (\BD \phi) [a,b) \ar[d, "i"] \\
        \cdots & \M_{n-1} (\BD \phi) [a,b) \ar[l] & \M_{n-1} D_\phi [a,b) \ar[l]
        & \M_{n} K_M [a,b) \ar[l]
    \end{tikzcd}
    \end{equation*}
By Lemma~\ref{lem:zero_homology},
both $\M_n D_\phi [a,b)$ and
$\M_{n-1} D_\phi [a,b)$ are zero.
This makes $i$ is an isomorphism.
\end{proof}

\subsection{Galois Connections and Persistence}
The homological version of Rota's Galois connection theorem applies to persistent M\"obius homology.
This follows immediately from the observation that a Galois connection between posets induces a Galois connection between their intervals posets.

A monotone function $f :P \to Q$ induces a monotone function $\Int f : \Int P \to \Int Q$ defined as $\Int f [a,b)  = \big[ f(a), f(b) \big]$.
Similarly, a monotone function $g : Q \to P$ induces a monotone function $\Int g  : \Int Q \to \Int P$.

\begin{lem}
If $f : P \leftrightarrows Q : g$ is a Galois connection, then $\Int f : \Int P \leftrightarrows \Int Q : \Int g$
is a Galois connection.
\end{lem}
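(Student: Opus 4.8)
The plan is to unwind both sides of the Galois connection axiom for $\Int f$ and $\Int g$ directly into the two coordinate inequalities that define the orders on $\Int P$ and $\Int Q$, and then apply the original Galois connection $f : P \leftrightarrows Q : g$ to each coordinate separately. The point is that the order on an interval poset is the product order on its two endpoints, so the adjunction condition decouples into two independent instances of the one we already have.

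First I would check that $\Int f$ and $\Int g$ are well-defined monotone maps, even though the statement presumes this from the preceding paragraph. Well-definedness amounts to noting that for $[a,b) \in \Int P$ we have $a \leq b$, so monotonicity of $f$ gives $f(a) \leq f(b)$, hence $[f(a), f(b)) = \Int f [a,b)$ is a genuine element of $\Int Q$; the same argument handles $\Int g$. Monotonicity is equally immediate: if $[a,b) \preceq [c,d)$ in $\Int P$, then $a \leq c$ and $b \leq d$ by definition, so $f(a) \leq f(c)$ and $f(b) \leq f(d)$, which is precisely $\Int f [a,b) \preceq \Int f [c,d)$.

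Then I would verify the adjunction itself. Fixing $[a,b) \in \Int P$ and $[x,y) \in \Int Q$, the relation $\Int f [a,b) \preceq [x,y)$ unwinds, by the definitions of $\Int f$ and of the order on $\Int Q$, to the conjunction $f(a) \leq x$ and $f(b) \leq y$. Likewise $[a,b) \preceq \Int g [x,y)$ unwinds to $a \leq g(x)$ and $b \leq g(y)$. Applying the defining property of the Galois connection $f : P \leftrightarrows Q : g$ to the pair $(a,x)$ gives $f(a) \leq x \iff a \leq g(x)$, and to the pair $(b,y)$ gives $f(b) \leq y \iff b \leq g(y)$; conjoining these two equivalences yields $\Int f [a,b) \preceq [x,y) \iff [a,b) \preceq \Int g [x,y)$.

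There is no real obstacle here. The only thing to be slightly careful about is to not let the half-open convention or the diagonal intervals $[a,a)$ distract from the argument: the reasoning treats the two endpoints symmetrically and never uses $a < b$, so these cases require no separate attention.
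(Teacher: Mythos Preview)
Your proof is correct and follows essentially the same approach as the paper: unwind the adjunction condition on $\Int P$ and $\Int Q$ into the two coordinatewise inequalities and apply the original Galois connection $f \dashv g$ to each. The paper omits the explicit checks of well-definedness and monotonicity that you include, but the core verification is identical.
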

\begin{proof}
The following statement is to be verified: for all $[a,b) \in \Int P$ and $[x,y) \in  \Int Q$, $\Int f  [a,b)  \leq [x,y)$ iff $[a,b) \leq \Int g [x,y)$.
Unwind the definition of $\preceq$ to get
$$f(a) \leq x \text{ and } f(b) \leq y \Leftrightarrow  a \leq g(x) \text{ and } b \leq g(y).$$
In other words, $f$ and $g$ must form a Galois connection which is true by assumption.
\end{proof}

Consider a $P$-module $M: P \to \cat$ and
a free presentation $\phi : F \Rightarrow M$.
Given a Galois connection $f : P \leftrightarrows Q: g$, let $N := M \circ g  : Q \to \cat$.
The free presentation $\phi$ restricts to a free presentation
$\psi : G \Rightarrow N$ where
$G := F \circ g$ and $\psi(x) := (\phi \circ g) (x)$.
The following corollary follows from Theorem~\ref{thm:rota_homology}.

\begin{cor}\label{cor:persistent_galois}
Let $f : P \leftrightarrows Q: g$ be a Galois connection.
For a module $M : P \to \cat$ and a free presentation $\phi : F \Rightarrow M$, let $N$ be the $Q$-module $M \circ g$ and $\psi : F \circ g \Rightarrow N$ the free presentation $\phi \circ g$.
For every interval $[x,y) \in \Int Q$,
$$\M_\ast (\BD \psi) [x,y) \cong H_\ast 
\Big( \Delta (\Int f)^{-1}_{\leq [x,y)}, \Delta (\Int f)^{-1}_{< [x,y)}; \BD \phi \Big).$$
 \end{cor}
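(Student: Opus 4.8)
The plan is to exhibit the left-hand side as the M\"obius homology of a module over $\Int Q$ obtained by restricting $\BD \phi$ along a right adjoint, and then to quote Theorem~\ref{thm:rota_homology}. By the lemma just proved, $\Int f : \Int P \leftrightarrows \Int Q : \Int g$ is a Galois connection, with $\Int g\,[x,y) = [g(x),g(y))$. The crux is the identity of $\Int Q$-modules
\[
  \BD \psi \;\cong\; (\BD \phi)\circ(\Int g).
\]
Granting this, Theorem~\ref{thm:rota_homology}, applied to the $\Int P$-module $\BD \phi$ along the Galois connection $\Int f : \Int P \leftrightarrows \Int Q : \Int g$, identifies $\M_\ast\big((\BD \phi)\circ(\Int g)\big)[x,y)$ with $H_\ast\big(\Delta (\Int f)^{-1}_{\leq [x,y)}, \Delta (\Int f)^{-1}_{< [x,y)}; \BD \phi\big)$, which is exactly the assertion.

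To prove the displayed identity I would compare the two modules interval by interval, using the Galois axiom $a \leq g(y) \iff f(a) \leq y$ throughout. Since $G = F\circ g$ by definition we have $G(x) = F(g(x))$, and writing $F = \bigoplus_i X_i^{\uparrow a_i}$ the Galois axiom gives $X_i^{\uparrow a_i}\circ g = X_i^{\uparrow f(a_i)}$, so $G = \bigoplus_i X_i^{\uparrow f(a_i)}$ is again free (as already noted in the text preceding the corollary). Because $g$ is a right adjoint it preserves the terminal object, so $g(\infty) = \infty$; hence $\Int g$ carries intervals with right endpoint $\infty$ to intervals with right endpoint $\infty$ and all others to intervals with right endpoint $\neq\infty$, so the two cases in the definition of the birth-death module line up. For $[x,y)$ with $g(y)\neq\infty$, unwinding the definitions gives
\[
  \BD \psi\,[x,y) = G(x)\cap\ker\psi_y = F(g(x))\cap\ker\phi_{g(y)} = (\BD \phi)\,[g(x),g(y)),
\]
and for $[x,\infty)$ both sides equal $F(g(x))$. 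Finally, the internal monomorphism of $\BD \psi$ on an inclusion $[x,y)\preceq[x',y')$ is, by construction, the (co)restriction of $F(g(x))\hookrightarrow F(g(x'))$ to the relevant kernels, which is precisely the map $(\BD \phi)\circ(\Int g)$ assigns to $[g(x),g(y))\preceq[g(x'),g(y'))$; so the identification is natural in $\Int Q$.

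The step I expect to be the main obstacle is exactly this verification that the birth-death construction is natural under precomposition by a right adjoint, with the bookkeeping around the $\infty$ endpoint handled with care. Everything else is formal: once $\BD \psi \cong (\BD \phi)\circ(\Int g)$ is established, the corollary is an immediate instance of Theorem~\ref{thm:rota_homology} applied to $\BD \phi : \Int P \to \cat$ and the Galois connection $\Int f : \Int P \leftrightarrows \Int Q : \Int g$.
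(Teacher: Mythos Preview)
Your approach is exactly the one the paper intends: the text immediately preceding the corollary says it ``follows from Theorem~\ref{thm:rota_homology}'', and you have correctly unpacked what that means---namely, that one must check $\BD\psi \cong (\BD\phi)\circ(\Int g)$ and then invoke the homological Rota theorem for the induced Galois connection $\Int f:\Int P\leftrightarrows\Int Q:\Int g$. Your verification that $G=F\circ g$ is free via $X^{\uparrow a}\circ g = X^{\uparrow f(a)}$ is clean and correct.

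One small point to tighten: you assert that $\Int g$ carries intervals with right endpoint $\neq\infty$ to intervals with right endpoint $\neq\infty$, but a right adjoint preserving the terminal object only gives $g(\infty_Q)=\infty_P$, not the converse. In general $g(y)=\infty_P$ precisely when $y\geq f(\infty_P)$, so if $f(\infty_P)<\infty_Q$ there are $y\neq\infty_Q$ with $g(y)=\infty_P$, and then $\BD\psi[x,y)=F(g(x))\cap\ker\phi(\infty_P)$ need not equal $(\BD\phi)[g(x),\infty_P)=F(g(x))$. The paper does not address this either; the cleanest fix is to impose the mild hypothesis $f(\infty_P)=\infty_Q$ (automatic, e.g., when $f$ is surjective or when the connection is a refinement), or to note that the two birth-death conventions agree once one works modulo the diagonal as in Proposition~\ref{prop:free}.
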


\subsection{Bounding Persistent Homological Dimension}
When underlying poset of a $P$-module $M$ is a finite distributive lattice, 
there is a bound on the highest non-trivial dimension of its persistent M\"obius homology.
Since~$P$ is distributive, $\Int P$ is also distributive.
If $a \in P$ covers $m$ elements and $b \in P$ covers $n$ elements, then the interval $[a,b) \in \Int P$ covers $m + n$ elements.
The following corollary follows immediately from Corollary~\ref{cor:depth}.

\begin{cor}
Let $P$ be a distributive lattice and suppose that for $a \leq b$, 
$a$ covers~$m$ elements and~$b$ covers~$n$ elements.
Then for any free presentation $\phi : F \Rightarrow M$ of a $P$-module $M$, $\M_{> m + n} (\BD \phi) [a,b) = 0$.
\end{cor}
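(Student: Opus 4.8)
The plan is to deduce the statement directly from Corollary~\ref{cor:depth}, applied to the module $\BD \phi$ over the poset $\Int P$. First I would verify that $\Int P$ is a finite distributive lattice. It is finite because $P$ is, and meets and joins are computed coordinatewise: $[a,b) \wedge [c,d) = [a \wedge c,\, b \wedge d)$ and $[a,b) \vee [c,d) = [a \vee c,\, b \vee d)$. These are legitimate intervals, since $a \leq b$ and $c \leq d$ force $a \wedge c \leq b \wedge d$ and $a \vee c \leq b \vee d$, and the distributive law for $\Int P$ follows slotwise from the one for $P$.

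Next I would pin down the covering relation of $\Int P$. The claim is that $[c,d) \lessdot [a,b)$ in $\Int P$ if and only if either $c \lessdot a$ in $P$ and $d = b$, or $c = a$ and $d \lessdot b$ in $P$. The key observation is that whenever $[c,d) \preceq [a,b)$ one has the factorization $[c,d) \preceq [c,b) \preceq [a,b)$, so a covering relation must be an equality in one of the two slots; conversely, any $[e,f)$ with $[c,b) \preceq [e,f) \preceq [a,b)$ is forced to satisfy $f = b$ and $c \leq e \leq a$, and symmetrically, which reduces the cover condition in $\Int P$ to one in $P$. Consequently the elements of $\Int P$ covered by $[a,b)$ are contained in $\{[c,b) : c \lessdot a\} \cup \{[a,d) : d \lessdot b,\ a \leq d\}$; these two families are disjoint, the first has exactly $m$ members, and the second has at most $n$. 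Hence $[a,b)$ covers at most $m + n$ elements of $\Int P$.

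Finally, since $\BD \phi$ is a module over the finite distributive lattice $\Int P$ and the element $[a,b)$ covers at most $m + n$ elements, Corollary~\ref{cor:depth} applied to $\BD \phi$ at $[a,b)$ gives $\M_{> m + n}(\BD \phi)[a,b) = 0$. I expect the only step requiring genuine care to be the combinatorial description of the covering relation in $\Int P$ — in particular, recognizing that $[a,b)$ covers \emph{at most} $m+n$ elements rather than exactly $m+n$, because an expression $[a,d)$ with $d \lessdot b$ lies in $\Int P$ only when $a \leq d$. Once this count is established, the conclusion is an immediate invocation of Corollary~\ref{cor:depth}, with no further homological input needed.
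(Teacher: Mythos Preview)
Your proposal is correct and follows exactly the paper's approach: verify that $\Int P$ is a finite distributive lattice, count the elements covered by $[a,b)$, and invoke Corollary~\ref{cor:depth}. Your analysis is in fact more careful than the paper's one-line justification, since you correctly observe that $[a,b)$ covers \emph{at most} $m+n$ elements (the paper asserts exactly $m+n$, which fails e.g.\ when $a=b$); this distinction does not affect the conclusion, as Corollary~\ref{cor:depth} only needs an upper bound.
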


We believe the bound of $m + n$ can be improved.
For example, when $P$ is totally ordered, as in the case of classical persistence, $m + n  \leq 2$ but $\M_{> 0} (\BD \phi) = 0$ as follows.

\begin{prop} \label{prop:classical_persistence}
Let $P$ be a totally ordered poset, and let $M$ be a $P$-module. 
For all intervals $[b, d) \in \Int P$, we have $\M_{> 0} (\BD \phi) [b, d) = 0$.
\end{prop}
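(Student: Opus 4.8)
The plan is to express $\M_\ast(\BD\phi)[b,d)$ as a relative order-cosheaf homology on the down-set of $[b,d)$ in $\Int P$ and then exploit, on the one hand, a general vanishing lemma for cosheaves over posets with a top element and, on the other, the pullback property of Proposition~\ref{prop:pullback}. Write $v = [b,d)$ and let $T = \{[a,c) \in \Int P : [a,c) \preceq v\}$ be the principal down-set of $v$. Since $v$ is the maximum of $T$, the definitions give $\Delta(\Int P)_{\preceq v} = \Delta T$ and $\Delta(\Int P)_{\prec v} = \Delta(T \setminus v)$, so $\M_\ast(\BD\phi)[b,d) = H_\ast\!\big(\Delta T, \Delta(T\setminus v); \cosheaf{\BD\phi}\big)$.

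The first ingredient is a lemma: for \emph{any} finite poset $S$ with a maximum $w$ and \emph{any} module $A : S \to \cat$, the complex $C_\bullet(\Delta S; \cosheaf{A})$ is chain homotopy equivalent to $A(w)$ in degree $0$, via the map $\epsilon_w : C_0(\Delta S; \cosheaf{A}) = \bigoplus_{x \in S} A(x) \to A(w)$, $(a_x)_x \mapsto \sum_x A(x \le w)(a_x)$; in particular $H_{\ge 1}(\Delta S; \cosheaf{A}) = 0$ and $H_0(\Delta S; \cosheaf{A}) \cong A(w)$, naturally for inclusions of subposets that retain a maximum. I would prove this by induction on $|S|$: choose a minimal $m \ne w$; since $m$ is the minimum of every chain through it, $C_\bullet(\Delta S, \Delta(S\setminus m); \cosheaf{A}) \cong A(m) \otimes_\Z \widetilde C_{\bullet - 1}(\Delta S_{>m}; \Z)$ (with $\widetilde C$ the augmented chain complex), and $\Delta S_{>m}$ is a cone with apex $w$ hence contractible, so this relative complex is null-homotopic; the long exact sequence of the pair then makes $C_\bullet(\Delta(S\setminus m); \cosheaf{A}) \hookrightarrow C_\bullet(\Delta S; \cosheaf{A})$ a homotopy equivalence compatible with the $\epsilon$-maps, and induction finishes.

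Applying the lemma to $T$ (which has maximum $v$) and the long exact sequence of the pair $(\Delta T, \Delta(T\setminus v))$ reduces the proposition to: (I) $H_{\ge 1}\!\big(\Delta(T\setminus v); \cosheaf{\BD\phi}\big) = 0$, and (II) the inclusion-induced map $H_0\!\big(\Delta(T\setminus v); \cosheaf{\BD\phi}\big) \to H_0\!\big(\Delta T; \cosheaf{\BD\phi}\big) \cong \BD\phi(v)$ is a monomorphism. Here total orderedness of $P$ enters: $v = [b,d)$ covers at most two elements of $\Int P$, namely $v_1 = [b^-, d)$ and $v_2 = [b, d^-)$ (predecessors where they exist), so $T \setminus v = T_1 \cup T_2$ where $T_i$ is the principal down-set of $v_i$, with $T_1 \cap T_2$ the principal down-set of $[b^-, d^-)$; all three have a maximum. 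When only one of $v_1, v_2$ exists — in particular whenever $[b,d)$ is on the diagonal — $T \setminus v$ is itself a down-set with a maximum (or empty), the lemma applies directly, (I) is immediate, and (II) is injectivity of the structure map $\BD\phi(\max(T\setminus v) \preceq v)$, which holds since every structure map of a birth-death module is monic.

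In the remaining case, Mayer--Vietoris for the cover $\Delta(T\setminus v) = \Delta T_1 \cup \Delta T_2$ together with the lemma applied to $T_1, T_2, T_1\cap T_2$ collapses to an exact sequence controlled by the connecting map $\delta : \BD\phi[b^-, d^-) \to \BD\phi(v_1) \oplus \BD\phi(v_2)$, whose components are (by naturality in the lemma) structure maps of $\BD\phi$, hence monic; so $\delta$ is monic, $H_1(\Delta(T\setminus v); \cosheaf{\BD\phi}) = \ker \delta = 0$, and $H_{\ge 2}$ vanishes for dimension reasons, giving (I). For (II), the induced map on $H_0 = \coker \delta$ is $[(u_1, u_2)] \mapsto \BD\phi(v_1 \preceq v)(u_1) - \BD\phi(v_2 \preceq v)(u_2)$, and Proposition~\ref{prop:pullback} applied to $b^- \le b \le d^- \le d$ says exactly that $\BD\phi[b^-, d^-)$ is the pullback of $\BD\phi(v_1) \to \BD\phi(v) \leftarrow \BD\phi(v_2)$; hence a kernel element is a compatible pair, lifts along $\delta$, and so is zero in $\coker\delta$. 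With (I) and (II) the long exact sequence gives $\M_n(\BD\phi)[b,d) = 0$ for $n \ge 2$ and $\M_1(\BD\phi)[b,d) = \ker(\text{the monomorphism of (II)}) = 0$, i.e.\ $\M_{>0}(\BD\phi)[b,d) = 0$. I expect the main obstacle to be the bookkeeping in the lemma — in particular upgrading the homology isomorphism $H_0 \cong A(\max)$ to a \emph{natural} chain homotopy equivalence, since it is exactly that naturality which lets the Mayer--Vietoris terms and the connecting map in (II) be identified with structure maps of $\BD\phi$ and thereby makes Proposition~\ref{prop:pullback} applicable; everything else is formal.
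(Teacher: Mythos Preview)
Your argument is correct, and it follows a genuinely different route from the paper's own proof. The paper invokes the Homological Rota Galois Connection Theorem (Theorem~\ref{thm:rota_homology}) to replace $\Int P$ by the four-element meet-generated sublattice $\mathcal{M}(C_{[b,d)})$; on that tiny poset it writes the restricted birth-death module $J$ in a short exact sequence $0 \to I \to J \to K \to 0$ with $I$ constant (so $\M_\ast I = 0$ by an explicit chain-complex computation) and $K = J/I$ still a pullback square with $0$ in the lower-left corner, whence $\partial_1$ is monic and $\M_{>0}K = 0$. You instead stay inside $\Int P$ and replace Theorem~\ref{thm:rota_homology} by your acyclicity lemma for order cosheaves over posets with a maximum, combined with Mayer--Vietoris for the cover $T\setminus v = T_1 \cup T_2$; the two approaches reconverge at the same essential step, namely that Proposition~\ref{prop:pullback} makes the relevant map (your map in (II), the paper's $\partial_1$ on $K$) a monomorphism. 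What your approach buys is self-containment --- you never appeal to Theorem~\ref{thm:rota_homology}, and your lemma (essentially the cone-contraction for $\min$-cosheaves) is a reusable primitive; what the paper's approach buys is a demonstration that Theorem~\ref{thm:rota_homology} does exactly the work of your lemma plus Mayer--Vietoris in one stroke, which is part of the paper's larger point. Your worry about naturality in the lemma is well placed but not an obstacle: the augmentation $\epsilon_w$ is visibly natural with respect to inclusions of subposets (the square with bottom arrow $A(w'\leq w)$ commutes on the nose), and since $\Delta S_{>m}$ is a cone the augmented complex $\widetilde C_\bullet(\Delta S_{>m};\Z)$ is not just acyclic but chain-contractible by a $\Z$-linear homotopy, so tensoring with $A(m)$ really does give a null-homotopic relative complex and the inclusion is a chain homotopy equivalence as you claim.
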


\begin{proof}
Since $P$ is totally ordered, it is a distributive lattice, and thus $\Int P$ is also a distributive lattice. For $[b, d) \in \Int P$, let $\mathcal{M}\big(C_{[b, d)}\big)$ be its meet-generated sublattice:
\[
\begin{tikzcd}
{[b', d)} \ar[r] & {[b, d)} \\
{[b', d')} \ar[u] \ar[r] & {[b, d')}. \ar[u]
\end{tikzcd}
\]
Here, $[b, d)$ is covered by at most two intervals, $[b', d)$ and $[b, d')$, where $b' \lessdot b$ and $d' \lessdot d$. 
Let $f : \Int P_{\leq [b, d)} \leftrightarrows \mathcal{M}\big(C_{[b, d)}\big) : g$ denote the canonical Galois connection.

Choose a free presentation $\phi : F \Rightarrow M$ and let $J : \mathcal{M}\big(C_{[b, d)}\big) \to \cat$ be the restriction of $\BD \phi$:
\[
\begin{tikzcd}
\BD \phi [b', d) \ar[r, hookrightarrow] & \BD \phi [b, d) \\
\BD \phi [b', d') \ar[u, hookrightarrow] \ar[r, hookrightarrow] & \BD \phi [b, d'). \ar[u, hookrightarrow]
\end{tikzcd}
\]
By Proposition~\ref{prop:pullback}, $J$ is a pullback diagram. Moreover, $J = M \circ g$ and $f^{-1}[b, d) = [b, d)$. By Theorem~\ref{thm:rota_homology}, $\M_\ast (\BD \phi) [b, d) \cong \M_\ast J [b, d)$.

To show $\M_{> 0} J [b, d) = 0$, we construct two intermediate modules $I, K : \mathcal{M}\big(C_{[b, d)}\big) \to \cat$ that fit into a short exact sequence $0 \to I \to J \to K \to 0$.

Define $I$ as the constant $\mathcal{M}\big(C_{[b, d)}\big)$-module:
\[
\begin{tikzcd}
\BD \phi [b', d') \ar[r, "\One"] & \BD \phi [b', d') \\
\BD \phi [b', d') \ar[u, "\One"] \ar[r, "\One"] & \BD \phi [b', d'). \ar[u, "\One"]
\end{tikzcd}
\]
The M\"obius chain complex of $I$ at $[b, d)$ is:
\bgroup
\renewcommand{\arraystretch}{0.7}
\setlength{\arraycolsep}{2pt}
\[
\begin{tikzcd}[ampersand replacement=\&, column sep=2em,/tikz/column 3/.style={column sep=4em},/tikz/column 4/.style={column sep=4em}]
\cdots \ar[r] \& 0 \ar[r] \& \begin{matrix}\BD \phi [b', d')\\ \oplus\\\BD \phi [b', d')\end{matrix} \ar[r, "\footnotesize{\begin{pmatrix} \One & 0 \\-\One&-\One\\0 &\One\\\end{pmatrix}}"] \& 
\begin{matrix}\BD \phi [b', d')\\ \oplus\\\BD \phi [b', d')\\ \oplus\\\BD \phi [b', d')\end{matrix} 
 \ar[r, "\footnotesize{\begin{pmatrix} \One & \One &\One \end{pmatrix}}"] \& \BD \phi [b', d') \ar[r] \& 0
\end{tikzcd}
\]
\egroup
implying $\M_\ast I [b, d) = 0$.

Let $K := J / I$ be the quotient module:
\[
\begin{tikzcd}
\dfrac{\BD \phi [b', d)}{\BD \phi [b', d')} \ar[r, hookrightarrow] & \dfrac{\BD \phi [b, d)}{\BD \phi [b', d')} \\
0 = \dfrac{\BD \phi [b', d')}{\BD \phi [b', d')} \ar[u, hookrightarrow] \ar[r, hookrightarrow] & \dfrac{\BD \phi [b, d')}{\BD \phi [b', d')}. \ar[u, hookrightarrow]
\end{tikzcd}
\]
Since $K$ is the quotient of a pullback diagram by a pullback subdiagram, it is also a pullback diagram. The chain complex for $\M_n K[b, d)$ has $\partial_1$ as the direct sum of two monomorphisms:
\[
\begin{tikzcd}
\cdots \ar[r] & 0 \ar[r] & \dfrac{\BD \phi [b', d)}{\BD \phi [b', d')} \oplus \dfrac{\BD \phi [b, d')}{\BD \phi [b', d')} \ar[r, "\partial_1"] & \dfrac{\BD \phi [b, d)}{\BD \phi [b', d')} \ar[r] & 0.
\end{tikzcd}
\]
The morphism $\partial_1$ must be a monomorphism because $K$ is a pullback diagram with the pullback being $0$.
Thus $\M_{> 0} K([b,d)) = 0$.

The short exact sequence $0 \to I \to J \to K \to 0$ induces a long exact sequence in M\"obius homology:
\[
\begin{tikzcd}
\cdots \ar[r] & \M_{n+1} I [b, d) \ar[r] & \M_n J [b, d) \ar[r, "i"] & \M_n K [b, d) \ar[d] \\
\cdots & \M_{n-1} K [b, d) \ar[l] & \M_{n-1} J [b, d) \ar[l] & \M_{n} I [b, d). \ar[l]
\end{tikzcd}
\]
Since $\M_\ast I [b, d) = 0$, $i$ is an isomorphism. Therefore, 
$$\M_{> 0} (\BD \phi) [b, d) \cong \M_{> 0} J [b, d) \cong \M_{> 0} K [b, d) = 0.$$
\end{proof}

\section{Statements and Declarations}
\paragraph{Conflict of Interest}
The authors have no financial or proprietary interests in any material discussed in this article.

\bibliographystyle{plain}
\bibliography{references}

\begin{thebibliography}{10}

\bibitem{ayala2024stratified}
David Ayala, Aaron Mazel-Gee, and Nick Rozenblyum.
\newblock {\em Stratified noncommutative geometry}, volume 297.
\newblock American Mathematical Society, 2024.

\bibitem{BACLAWSKI1975125}
Kenneth Bac{\l}awski.
\newblock Whitney numbers of geometric lattices.
\newblock {\em Advances in Mathematics}, 16(2):125--138, 1975.

\bibitem{baclawski1977galois}
Kenneth Baclawski.
\newblock Galois connections and the {L}eray spectral sequence.
\newblock {\em Advances in Mathematics}, 25(3):191--215, 1977.

\bibitem{birkhoff1940lattice}
Garrett Birkhoff.
\newblock {\em Lattice theory}, volume~25.
\newblock American Mathematical Soc., third edition, 1973.

\bibitem{BuchetE22}
Micka{\"{e}}l Buchet and Emerson~G. Escolar.
\newblock Realizations of indecomposable persistence modules of arbitrarily
  large dimensions.
\newblock {\em Journal of Compututational Geometry}, 13(1), 2022.

\bibitem{stability-persistence}
David Cohen-Steiner, Herbert Edelsbrunner, and John Harer.
\newblock Stability of persistence diagrams.
\newblock {\em Discrete \& computational geometry}, 37:103--120, 2007.

\bibitem{Deheuvels1962}
R.~Deheuvels.
\newblock Homologie des ensembles ordonnés et des espaces topologiques.
\newblock {\em Bulletin de la Soci\'et\'e Math\'ematique de France},
  90:261--321, 1962.

\bibitem{ghrist2014elementary}
Robert Ghrist.
\newblock {\em Elementary Applied Topology}.
\newblock CreateSpace Independent Publishing Platform, 2014.

\bibitem{gian1964foundations}
Rota Gian-Carlo.
\newblock On the foundations of combinatorial theory {I}. {T}heory of
  {M}\"obius functions.
\newblock {\em Zeitschrift f\"ur Wahrscheinlichkeitstheorie und Verwandte
  Gebiete}, 2(4):340--368, 1964.

\bibitem{GulenMcCleary}
Aziz~Burak G\"ulen and Alexander McCleary.
\newblock Galois connections in persistent homology.
\newblock arXiv, July 2022.

\bibitem{kim2018generalized}
Woojin Kim and Facundo M{\'e}moli.
\newblock Generalized persistence diagrams for persistence modules over posets.
\newblock {\em Journal of Applied and Computational Topology}, 5(4):533--581,
  2021.

\bibitem{McCPa20-edit-distance}
Alexander McCleary and Amit Patel.
\newblock Edit distance and persistence diagrams over lattices.
\newblock {\em SIAM Journal on Applied Algebra and Geometry}, 6(2):134--155,
  2022.

\bibitem{Patel2018}
Amit Patel.
\newblock Generalized persistence diagrams.
\newblock {\em Journal of Applied and Computational Topology}, 1(3):397--419,
  June 2018.

\bibitem{petersen2017spectral}
Dan Petersen.
\newblock A spectral sequence for stratified spaces and configuration spaces of
  points.
\newblock {\em Geometry \& Topology}, 21(4):2527--2555, 2017.

\bibitem{QUILLEN1978101}
Daniel Quillen.
\newblock Homotopy properties of the poset of nontrivial p-subgroups of a
  group.
\newblock {\em Advances in Mathematics}, 28(2):101--128, 1978.

\bibitem{rota_euler}
Gian-Carlo Rota.
\newblock On the combinatorics of the {E}uler characteristic.
\newblock In {\em Studies in Pure Mathematics (Presented to Richard Rado)},
  page 221–233. Academic Press, London, 1971.

\bibitem{SCHAPIRA199183}
Pierre Schapira.
\newblock Operations on constructible functions.
\newblock {\em Journal of Pure and Applied Algebra}, 72(1):83--93, 1991.

\bibitem{stanley_2011}
Richard~P. Stanley.
\newblock {\em Enumerative Combinatorics}, volume~1 of {\em Cambridge Studies
  in Advanced Mathematics}.
\newblock Cambridge University Press, 2 edition, 2011.

\bibitem{viro1988euler}
Oleg Viro.
\newblock Some integral calculus based on {E}uler characteristic.
\newblock {\em Lecture Notes in Mathematics}, 1346:127--138, 1988.

\bibitem{wachs2006poset}
Michelle~L. Wachs.
\newblock Poset topology: Tools and applications.
\newblock In Ezra Miller, Victor Reiner, and Bernd Sturmfels, editors, {\em
  Geometric Combinatorics}. IAS/Park City Mathematics Series, 2006.

\bibitem{weibel2013kbook}
Charles~A. Weibel.
\newblock {\em The K-Book: An Introduction to Algebraic K-Theory}, volume 145
  of {\em Graduate Studies in Mathematics}.
\newblock American Mathematical Society, Providence, RI, 2013.

\bibitem{yanagawa2005dualizing}
Kohji Yanagawa.
\newblock Dualizing complex of the incidence algebra of a finite regular cell
  complex.
\newblock {\em Illinois Journal of Mathematics}, 49(4):1221--1243, 2005.

\end{thebibliography}

\appendix

\section{Proof of Proposition~\ref{prop:chain_eq}}
\label{sec:appendix_one}

This section proves Proposition~\ref{prop:chain_eq} in detail through two steps. First, Lemma~\ref{lem:left_chain_homotopy} shows that $\Delta g \circ \Delta f$ is chain homotopic to $\One_{C_\bullet(\Delta P; \Z)}$. Then, Lemma~\ref{lem:right_chain_homotopy} shows that $\Delta f \circ \Delta g$ is chain homotopic to $\One_{C_\bullet(\Delta Q; \Z)}$.

\paragraph{Left Chain Homotopy}

Consider an $n$-simplex in $\Delta P$ given as a chain $a_0  < \cdots < a_n$.
Let $b_i = g \circ f(a_i)$.
The following relations follow from Lemma~\ref{lem:galois}~(i) combined with the fact that
$g \circ f$ is monotone:
	\begin{equation*}
	\begin{tikzcd}
	b_0 \ar[r, "\leq"] & b_1 \ar[r, "\leq"] & \ar[r, "\leq"]  \cdots \ar[r, "\leq"] &  b_{n-1} \ar[r, "\leq"] & b_n \\
	a_0 \ar[r, "<"] \ar[u, "\leq"] & a_1 \ar[r, "<"] \ar[u, "\leq"] & \ar[r, "<"]  \cdots \ar[r, "<"]  
	&  a_{n-1} \ar[r, "<"] \ar[u, "\leq"] & a_n \ar[u, "\leq"].
	\end{tikzcd}
	\end{equation*}
The sequence $b_0 \leq \cdots \leq b_n$ may contain repetitions in which case it is not an $n$-simplex. 
Consider the linear map $\phi_n : C_n (\Delta P; \Z) \to C_{n+1}(\Delta P; \Z)$ generated by the following simplex-wise
assignment:
	\begin{equation}\label{eq:left_chain_homotopy}
	\phi_n (a_0 < \cdots < a_n) := \sum_{i=0}^n (-1)^i a_0 < \cdots < a_i \leq b_i \leq \cdots \leq b_n.
	\end{equation}
Identify every term on the right-hand side that is not an $(n+1)$-simplex to $0$.

\begin{lem}\label{lem:left_chain_homotopy}
The composition $\Delta g \circ \Delta f : C_\bullet (\Delta P; \Z) \to C_\bullet (\Delta P; \Z)$ is chain homotopic to the identity.
\end{lem}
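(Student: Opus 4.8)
The plan is to verify directly that the proposed map $\phi_\bullet$ is a chain homotopy between $\Delta g \circ \Delta f$ and the identity, i.e., that $\partial_{n+1} \phi_n + \phi_{n-1} \partial_n = \Delta g \circ \Delta f - \id$ on $C_n(\Delta P; \Z)$. Since all maps in sight are generated simplex-wise, it suffices to check this identity on a single generator, the $n$-simplex $\sigma = (a_0 < \cdots < a_n)$, with $b_i := g \circ f(a_i)$ as in the setup. First I would expand $\partial_{n+1}\phi_n(\sigma)$: applying $\partial_{n+1}$ to the $i$-th term $\sigma_i := (a_0 < \cdots < a_i \le b_i \le \cdots \le b_n)$ produces an alternating sum over deletions of each vertex. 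I would organize these deleted-vertex terms into three groups: (a) deleting one of $a_0, \dots, a_{i-1}$; (b) deleting the ``middle'' vertex where the $a$-part meets the $b$-part (this is where $a_i$ and $b_i$ interact, and where repeated vertices get identified to zero); (c) deleting one of $b_{i+1}, \dots, b_n$.

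The second step is the standard telescoping bookkeeping. Summing over $i$ with the sign $(-1)^i$, the group-(a) terms from $\sigma_i$ cancel in pairs against group-(c) terms from $\sigma_{i-1}$ (after reindexing), exactly as in the classical proof that the prism operator is a chain homotopy for simplicial approximation; this is where the sign conventions must be tracked carefully. What survives the telescoping are the two ``boundary'' contributions: the term with no $a$'s deleted and the middle vertex of $\sigma_0$ handled one way, giving $(b_0 \le \cdots \le b_n) = \Delta g \circ \Delta f(\sigma)$ up to the degenerate-simplex identification, and the term from $\sigma_n$ collapsing to $(a_0 < \cdots < a_n) = \sigma$ itself. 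Meanwhile $\phi_{n-1}\partial_n(\sigma)$ produces precisely the terms needed to cancel the remaining ``crossed'' contributions — those coming from deleting some $a_j$ with $j < i$ inside $\sigma_i$ — matching the $\phi_{n-1}$ applied to the face $(a_0 < \cdots < \hat a_j < \cdots < a_n)$. Collecting everything yields $\partial \phi + \phi \partial = \Delta g \circ \Delta f - \id$.

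I expect the main obstacle to be the degenerate-simplex identifications: because $b_i \le b_{i+1}$ may be an equality, and because $a_i \le b_i$ may be an equality (this happens exactly when $a_i = g \circ f(a_i)$, e.g.\ when $a_i$ is in the image of $g$), many terms in the expansion are declared zero. One must check that this truncation is consistent — i.e., that the terms set to zero on the two sides of the identity match up — rather than, say, killing a term on one side whose partner on the other side survives. The cleanest way to handle this is to first carry out the whole computation formally in the chain complex of the ``ordered'' or degenerate simplices (allowing non-strict chains), verify the homotopy identity there where no terms vanish, and then quotient by degenerates, noting that the quotient is a chain map and that $\phi_\bullet$ descends. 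This sidesteps ad hoc case analysis on which inequalities are strict. Once that is in place, the lemma follows, and the symmetric statement (Lemma~\ref{lem:right_chain_homotopy}) will follow by the same argument with the roles of $P$ and $Q$, and of $f$ and $g$, interchanged, using Lemma~\ref{lem:galois}~(ii) in place of (i).
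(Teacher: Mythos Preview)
Your approach is essentially the paper's: both verify $\partial\phi + \phi\partial = \Delta g\circ\Delta f - \id$ by direct expansion on a generating simplex, with the paper simply writing out $\partial\circ\phi$ and $\phi\circ\partial$ and observing that all terms of the latter appear in the former with opposite sign, leaving exactly $(b_0\le\cdots\le b_n)-(a_0<\cdots<a_n)$.

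One bookkeeping correction: your description of which groups cancel is swapped. The telescoping \emph{within} $\partial\phi$ happens among your group-(b) terms, not between (a) and (c): deleting $a_i$ from $\sigma_i$ yields the same chain as deleting $b_{i-1}$ from $\sigma_{i-1}$, and these cancel, leaving only the endpoints $(b_0\le\cdots\le b_n)$ from $\sigma_0$ and $-(a_0<\cdots<a_n)$ from $\sigma_n$. It is your groups (a) and (c) --- deleting an $a_j$ with $j<i$ or a $b_j$ with $j>i$ --- that are matched term-for-term by $\phi\circ\partial$. As written, you have group (a) canceled twice (once against (c) inside $\partial\phi$, once against $\phi\partial$), which is inconsistent; sorting this out is exactly the computation the paper carries out.

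Your suggestion to run the argument first in the chain complex of ordered (possibly degenerate) simplices and then pass to the quotient is a genuine improvement over the paper's treatment, which just declares non-strict chains to be zero without verifying that the identifications are compatible on both sides of the homotopy identity.
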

\begin{proof}
For any $n$-simplex $a_0 < \cdots < a_n$ in $\Delta P$ and $b_i = g \circ f(a_i)$ as above, 
we show 
$$(b_0 \leq \cdots \leq b_n) - (a_0 < \cdots < a_n) = \partial \circ \phi (a_0 < \cdots < a_n) + 
\phi \circ \partial (a_0 < \cdots < a_n).$$
Expand the first term to
	\begin{align*}
	\partial \circ \phi (a_0 < \cdots < a_n) =& 
		\; \partial \left(  \sum_{i=0}^n (-1)^i a_0 < \cdots < a_i \leq b_i \leq \cdots \leq b_n \right) \\
		=& \; \sum_{i=0}^n (-1)^i \Bigg( \sum_{j=0}^i (-1)^j a_0 < \cdots < \widehat{a}_j < \cdots 
		< a_i \leq b_i \leq \cdots \leq b_n \\
		&+  \sum_{j=i}^n (-1)^{j+1} a_0 < \cdots < a_i \leq b_i \leq \cdots \leq \widehat{b}_j \leq \cdots \leq b_n \Bigg).
	\end{align*}
Expand the second term to 
	\begin{align*}
	\phi \circ \partial (a_0 < \cdots < a_n) =&\; \phi \Bigg( \sum_{j=0}^n (-1)^j a_0 < \cdots < \widehat{a}_j < \cdots < a_n \Bigg) \\
	=& \; \sum_{j=0}^n (-1)^j 
	\Bigg( \sum_{i=0}^{j-1} (-1)^i a_0 < \cdots < a_i \leq b_i \leq \cdots  \leq \widehat{a}_j \leq \cdots \leq b_n  \\
	&+ \sum_{i = j+1}^n (-1)^{i-1} a_0 < \cdots < \widehat{a}_j < \cdots < a_i \leq b_i \leq \cdots \leq b_n \Bigg).
	\end{align*}
Note that every term in the second expansion has a corresponding term in the first expansion but with an opposite sign.
This leaves just $(b_0 \leq \cdots \leq b_n) - (a_0 < \cdots < a_n)$ as desired.
\end{proof}

\paragraph{Right Chain Homotopy}
Consider an $n$-simplex in $\Delta Q$ given as a chain $x_0  < \cdots < x_n$.
Let $y_i = f \circ g(x_i)$.
The following relations follow from Lemma~\ref{lem:galois}~(ii) combined with the fact that
$f \circ g$ is monotone:
	\begin{equation*}
	\begin{tikzcd}
	y_0 \ar[r, "\leq"] \ar[d, "\leq"]  & y_1 \ar[r, "\leq"] \ar[d, "\leq"] & \ar[r, "\leq"]  \cdots \ar[r, "\leq"] &  
	y_{n-1} \ar[r, "\leq"] \ar[d, "\leq"] & y_n \ar[d, "\leq"] \\
	x_0 \ar[r, "<"]  & x_1 \ar[r, "<"] & \ar[r, "<"]  \cdots \ar[r, "<"]  
	&  x_{n-1} \ar[r, "<"]  & x_n .
	\end{tikzcd}
	\end{equation*}
The sequence $y_0 \leq \cdots \leq y_n$ may contain repetitions in which case it is not an $n$-simplex. 
Consider the linear map $\psi_n : C_n (\Delta Q; \Z) \to C_{n+1}(\Delta Q; \Z)$ generated by the following simplex-wise
assignment:
	\begin{equation}\label{eq:right_chain_homotopy}
	\psi_n (x_0 < \cdots < x_n) := \sum_{i=0}^n (-1)^i y_0 \leq \cdots \leq y_i \leq x_i < \cdots < x_n.
	\end{equation}
Identify every term on the right-hand side that is not an $(n+1)$-simplex to $0$.

\begin{lem}\label{lem:right_chain_homotopy}
The composition $\Delta f \circ \Delta g : C_\bullet (\Delta Q; \Z) \to C_\bullet (\Delta Q; \Z)$ is chain homotopic to the identity.
\end{lem}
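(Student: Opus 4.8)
The plan is to verify directly that the maps $\psi_n$ of Equation~\eqref{eq:right_chain_homotopy} form a chain homotopy between $\Delta f \circ \Delta g$ and the identity, by the same prism-operator computation used for $\phi$ in the proof of Lemma~\ref{lem:left_chain_homotopy}, with the roles of the two posets dualized; together with that lemma this completes the proof of Proposition~\ref{prop:chain_eq}. Fix an $n$-simplex $x_0 < \cdots < x_n$ in $\Delta Q$ and set $y_i := f \circ g(x_i)$. By Lemma~\ref{lem:galois}~(ii) and monotonicity of $f \circ g$ one has $y_0 \leq \cdots \leq y_n$ and $y_i \leq x_i$ for every $i$ (this is the diagram preceding Equation~\eqref{eq:right_chain_homotopy}), and the sequence $y_0 \leq \cdots \leq y_n$ is exactly $\Delta f \circ \Delta g$ applied to the simplex, collapsed to $0$ when it carries a repeated vertex. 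The goal is the chain-homotopy identity
$$
\big( \partial \circ \psi + \psi \circ \partial \big)(x_0 < \cdots < x_n) = \pm\Big( (y_0 \leq \cdots \leq y_n) - (x_0 < \cdots < x_n) \Big)
$$
with the standing convention that any listed sequence containing a repetition is identified with $0$; the overall sign arises because in $\psi_n$ the new vertices $y_0 \leq \cdots \leq y_i$ are inserted \emph{before}, not after, the $x$'s, and it is irrelevant for the statement since it only replaces $\psi$ by $-\psi$.

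I would then proceed in three steps, mirroring the appendix computation for $\phi$. First, expand $\partial \circ \psi(x_0 < \cdots < x_n)$: apply $\psi_n$ to get $\sum_i (-1)^i (y_0 \leq \cdots \leq y_i \leq x_i < \cdots < x_n)$ and take the boundary of each $(n+1)$-simplex, separating the faces that delete one of the $y$'s (vertex positions $\leq i$) from those that delete one of the $x$'s (positions $\geq i$) and recording signs. Second, expand $\psi \circ \partial(x_0 < \cdots < x_n)$: form $\sum_j (-1)^j (x_0 < \cdots < \widehat{x_j} < \cdots < x_n)$ first, then apply $\psi_{n-1}$, splitting according to whether the inserted prefix $y_0 \leq \cdots \leq y_k$ ends before or after the deleted index $j$. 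Third, match terms: every term produced by $\psi \circ \partial$ occurs in the expansion of $\partial \circ \psi$ with the opposite sign, so all ``interior'' terms cancel in pairs, just as for $\phi$; the only survivors are the two extreme faces, namely the one deleting the last vertex $x_n$ in the $i=n$ summand, which contributes $\pm(y_0 \leq \cdots \leq y_n)$, and the one deleting the first vertex $y_0$ in the $i=0$ summand, which contributes $\mp(x_0 < \cdots < x_n)$. I would also record, for use in the proof of Theorem~\ref{thm:rota_homology} and in Lemma~\ref{lem:right_cosheaf_homotopy}, that if $\max(x_0 < \cdots < x_n) < y$ then every simplex appearing above has maximum $\leq \max(x_i) < y$, so $\psi$ restricts to the subcomplex $\Delta Q_{<y}$.

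The only obstacle I anticipate is the sign and index bookkeeping across these three expansions — checking that the double sums are indexed correctly and that the cancellation between $\partial \circ \psi$ and $\psi \circ \partial$ is exhaustive, leaving precisely the two boundary terms. There is no new idea beyond Lemma~\ref{lem:left_chain_homotopy}: $\psi_n$ is the prism operator that ``pushes each $x_i$ down to $y_i = f \circ g(x_i)$,'' dual to $\phi_n$ pushing each $a_i$ up to $b_i = g \circ f(a_i)$, so the work is purely organizational. One could alternatively deduce the lemma from Lemma~\ref{lem:left_chain_homotopy} applied to the Galois connection $g : Q^{\mathrm{op}} \leftrightarrows P^{\mathrm{op}} : f$ obtained by reversing both orders — the adjunction axiom being symmetric in this sense, and $\Delta Q^{\mathrm{op}}$ being the same simplicial complex as $\Delta Q$ — but reconciling the orientation conventions under order-reversal is itself a sign computation, so the direct argument is the cleaner route.
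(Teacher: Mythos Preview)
Your proposal is correct and follows essentially the same route as the paper: expand $\partial\circ\psi$ and $\psi\circ\partial$ on a generic $n$-simplex, observe that every term of the latter appears in the former with the opposite sign, and identify the two uncancelled extreme faces as $(x_0<\cdots<x_n)$ and $-(y_0\leq\cdots\leq y_n)$. The paper does not hedge on the sign (it obtains $\id-\Delta f\circ\Delta g=\partial\psi+\psi\partial$ directly), and your remarks on the restriction to $\Delta Q_{<y}$ and on the $P^{\mathrm{op}}\leftrightarrows Q^{\mathrm{op}}$ shortcut are correct extra observations that the paper does not record here.
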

\begin{proof}
For any $n$-simplex $x_0 < \cdots < x_n$ in $\Delta Q$ and $y_i = f \circ g(x_i)$ as above, 
we show 
$$(x_0 < \cdots < x_n) - (y_0 \leq \cdots \leq y_n) = \partial \circ \psi (x_0 < \cdots < x_n) + \psi \circ \partial (x_0 < \cdots < x_n).$$
Expand the first term to
	\begin{align*}
	\partial \circ \psi (x_0 < \cdots < x_n) =& 
		\; \partial \left(  \sum_{i=0}^n (-1)^i y_0 \leq \cdots \leq y_i \leq x_i < \cdots < x_n \right) \\
		=& \;  \sum_{i=0}^n (-1)^i \Bigg( \sum_{j=0}^i (-1)^j y_0 \leq \cdots \leq \widehat{y}_j \leq \cdots \leq y_i \leq x_i 
		< \cdots < x_n \\
		&+  \sum_{j=i}^n (-1)^{j+1} y_0 \leq \cdots \leq y_i \leq x_i < \cdots < \widehat{x}_j < \cdots < x_n \Bigg).
	\end{align*}
Expand the second term to 
	\begin{align*}
	\psi \circ \partial (x_0 < \cdots < x_n) =&\; \psi \Bigg( \sum_{j=0}^n (-1)^j x_0 < \cdots < \widehat{x}_j < \cdots < x_n \Bigg) \\
	=& \; \sum_{j=0}^n (-1)^j 
	\Bigg( \sum_{i=0}^{j-1} (-1)^i y_0 \leq \cdots \leq y_i \leq x_i < \cdots < \widehat{x}_j < \cdots < x_n  \\
	&+ \sum_{i = j+1}^n (-1)^{i-1} y_0 \leq \cdots \leq \widehat{y}_j \leq \cdots \leq y_i \leq x_i < \cdots < x_n \Bigg).
	\end{align*}
Note that every term in the second expansion has a corresponding term in the first expansion but with an opposite sign.
This leaves just $(x_0 < \cdots < x_n) - (y_0 \leq \cdots \leq y_n)$ as desired.
\end{proof}

\end{document}